\documentclass[11p,reqno]{amsart}
\textheight 8in
\textwidth 5.5 in
\voffset -0.3in
\hoffset -0.6in
\usepackage{amsmath}
\usepackage{amsfonts}
\usepackage{amssymb}
\usepackage{graphicx}
\usepackage{color}
\parindent 6pt
\parskip 4pt

\newtheorem{theorem}{Theorem}[section]\newtheorem{thm}[theorem]{Theorem}
\newtheorem*{theorem*}{Theorem}
\newtheorem{lemma}{Lemma}[section]
\newtheorem{corollary}[theorem]{Corollary}
\newtheorem{proposition}{Proposition}[section]
\newtheorem{prop}{Proposition}[section]
\newtheorem{remark}[theorem]{Remark}


\def \b {\beta}

\def\Ric{\text{Ric}}

\def\bbar{{\bar{\beta}}}
\def\a{\alpha}
\def\l{\lambda}

\def\g{\gamma}

\def\abb{\alpha{\bar{\beta}}}
\def\gbd{\gamma{\bar{\delta}}}

\def\Ric{\operatorname{Ric}}
\def\Rm{\operatorname{Rm}}
\def\I{\operatorname{I}}
\def\Scal{\operatorname{Scal}}

\def\Im{\operatorname{Im}}

\def\I{\operatorname{I}}


\numberwithin{equation}{section}

\begin{document}

\title[Shrinkers and ancient solutions]{K\"ahler-Ricci Shrinkers and Ancient Solutions with Nonnegative Orthogonal Bisectional Curvature}

\author{Xiaolong Li}
\address{Department of Mathematics, University of California, Irvine, Irvine, CA 92697, USA}
\email{xiaolol1@uci.edu}

\author{Lei Ni}\thanks{The research is partially supported by  ``Capacity Building for Sci-Tech Innovation-Fundamental Research Funds".   }
\address{Department of Mathematics, University of California, San Diego, La Jolla, CA 92093, USA}
\email{lni@math.ucsd.edu}

\subjclass[2010]{53C44, 53C55}
\keywords{Orthogonal bisectional curvature, gradient shrinking solitons, ancient solutions.}

\maketitle

\begin{abstract}
    In this paper we prove  classification results for gradient shrinking Ricci solitons under two invariant conditions, namely nonnegative orthogonal bisectional curvature and weakly PIC$\mbox{}_1$,  without any curvature bound.  New results on ancient solutions for the Ricci and K\"ahler-Ricci flow are also   obtained. The main new feature is that no curvature upper bound is assumed.
\end{abstract}


\section{Introduction}
Let $M^n$ be a K\"ahler manifold and $R$ denotes the curvature tensor.
The {\it orthogonal bisectional curvature} ($B^{\perp}$ for short) is defined for a pair of $X, Y\in T'_x M$ with $\langle X, \overline{Y}\rangle =0$ as $R(X, \bar{X}, Y, \bar{Y})$.  This curvature  arises naturally in the Bochner formula involving $(1,1)$-forms on a K\"ahler manifold \cite{HSW, Gold}

For the compact manifolds, there exist classification results \cite{XChen, GuZhang} (cf. \cite{Wilking} for alternative arguments) under $B^\perp>0$ and $B^\perp\ge 0$ (abbreviated as NOB) condition. When the manifold is compact and $B^\perp>0$, the manifold has to be biholomorphic to the complex projective space $\mathbb{P}^n$ (\cite{GuZhang,Wilking}). However a  complete classification of compact K\"ahler manifolds with $B^\perp\ge0$ is still hinged upon  the understanding of noncompact K\"ahler manifolds with $B^\perp\ge 0$  (cf. \cite{GuZhang} Theorem 1.3 part (2)).  On the other hand  even under the stronger condition of positive bisectional curvature it is still unknown  whether or not  such a complete noncompact K\"ahler manifold is Stein except some special cases \cite{NT}.   Hence understanding the structure of  K\"ahler manifolds with NOB is an interesting area of research.
In view of the examples constructed in \cite{HT}, \cite{NNiu} and \cite{NZ} the orthogonal bisectional curvature $B^\perp$ is completely independent of the holomorphic sectional curvature, or the Ricci curvature. Recently there is a joint work of second author  \cite{NNiu} proving  a Liouville theorem for plurisubharmonic functions, which complements a recent result of Liu \cite{GangLiu},  and a  gap theorem in on K\"ahler manifolds with $B^\perp\ge 0$ and $\Ric\ge0$. A comparison theorem of  the orthogonal complex Hessian was also obtained recently in \cite{NZ} for K\"ahler manifolds with $B^\perp\ge 0$.

The $B^\perp\ge 0$ condition is also related to K\"ahler-Ricci flow. First $B^\perp\ge 0$ is a K\"ahler analogue of the nonnegative isotropic curvature (please see \cite{Wilking} for this connection via the Lie algebraic point of view, and Section 3 of this paper for the definition of non-negativity of the isotropic curvature). More importantly $B^\perp\ge0$ is also invariant under the K\"ahler-Ricci flow (cf.  \cite{XChen}, \cite{GuZhang}).  In complex dimension two, $B^\perp\ge 0$ is equivalent to isotropic curvature being nonnegative.
However, as it was pointed out in \cite{GuZhang} that when the complex dimension is at least three, $B^\perp\ge 0$ is a much weaker condition than the isotropic curvature being nonnegative.
Gradient shrinking K\"ahler-Ricci (Ricci) solitons (abbreviated as shrinkers) naturally arises in the study of K\"ahler-Ricci (Ricci) flow as the singularity models. A K\"ahler-Ricci shrinker is a triple $(M, g, f)$ consisting of a complete K\"ahler manifold $(M, g)$ together with a potential function $f$ such that the Ricci curvature, the Hessian of $f$, and metric tensor $g$ satisfy that
$R_{\alpha\bar{\beta}}+f_{\alpha\bar{\beta}}-g_{\alpha\bar{\beta}}=0$ and $f_{\alpha \b}=0$. The soliton structure is a generalization of Einstein (K\"ahler-Einstein) metrics.
 Classification results on shrinking K\"ahler-Ricci solitons are  important/useful  to understand  the flow. We
 first prove the following theorem in Section 2.

\begin{thm}\label{thm:1}
Let $(M^n,g,f)$ be a  complete gradient shrinking K\"ahler-Ricci soliton. Suppose $M$ has $B^{\perp} \ge0$ and its universal cover does not contain a factor of $\mathbb{C}$. Then $M$ is compact. \end{thm}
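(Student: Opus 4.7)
The plan is to combine the invariance of $B^\perp\ge 0$ along the K\"ahler--Ricci flow with Hamilton's strong maximum principle applied to the self-similar flow generated by the shrinker, and then match the resulting alternatives against the hypothesis that the universal cover contains no $\mathbb{C}$-factor. Throughout I would use the standard shrinker identities: since $f_{\alpha\beta}=0$, the $(1,0)$-gradient $V:=\nabla^{1,0}f$ is a holomorphic vector field with $\nabla V=\operatorname{id}-\Ric$ as an endomorphism of $T^{1,0}M$; the function $f-R-|\nabla f|^2$ is constant; and by Chen's theorem $R\ge 0$ on $M$. Applying the strong maximum principle to the drift-Laplacian equation for $R$ lets me reduce to the case $R>0$, since $R\equiv 0$ forces $M$ to be the Gaussian shrinker on $\mathbb{C}^n$, which already contains $\mathbb{C}$-factors.

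For the main step, view $(g,f)$ as inducing a self-similar K\"ahler--Ricci flow $g(t)$. Since $B^\perp\ge 0$ is preserved along this flow by \cite{XChen,GuZhang}, Hamilton's strong maximum principle applied to the evolving curvature tensor yields one of two alternatives: either $B^\perp>0$ holds everywhere, or the pointwise null set
\[
\mathcal{N}_x=\{X\in T^{1,0}_xM : R(X,\bar X,Y,\bar Y)=0 \text{ whenever } \langle X,\bar Y\rangle=0\}
\]
defines a nontrivial $J$-invariant parallel distribution $\mathcal{V}\subset TM$. In the second alternative, de Rham's theorem splits the universal cover as $\widetilde M=N_1\times N_2$ with $TN_1=\mathcal{V}$; each factor inherits a K\"ahler--Ricci soliton structure, and by construction $N_1$ carries $B^\perp\equiv 0$. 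I would then show $N_1$ is flat: summing $R(X,\bar X,e_i,\bar e_i)$ over a unitary frame with $e_1=X/|X|$ reduces $\Ric(X,\bar X)$ to the single term $R(X,\bar X,X,\bar X)/|X|^2$, since all cross terms vanish. Comparing a Hermitian form with a homogeneous quartic in $X$ forces $N_1$ to have constant holomorphic sectional curvature $c$, and reinserting $B^\perp\equiv 0$ into the space-form identity $B^\perp=c|X|^2|Y|^2$ yields $c=0$; hence $N_1$ is flat and contributes the forbidden $\mathbb{C}^k$-factor.

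It remains to exclude the alternative $B^\perp>0$ pointwise. Here I would combine $B^\perp>0$ with Chen's $R>0$ and the shrinker identities to upgrade to strict positivity of the Ricci curvature, after which a splitting theorem for complete shrinkers with $\Ric\ge 0$ and no Euclidean (hence no $\mathbb{C}$-)factor yields compactness of $M$. The principal obstacle is precisely this last step: $B^\perp>0$ alone does not immediately imply $\Ric>0$, because the holomorphic sectional contribution could in principle be negative, so the argument must exploit the shrinker equation and the preserved cone condition along $g(t)$ rather than a purely pointwise sum. Navigating this improvement \emph{without} a curvature upper bound (as emphasized in the abstract) is the technical heart of the theorem.
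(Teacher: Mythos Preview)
Your proposal has a genuine gap at exactly the point you flag as the ``technical heart.'' You reduce to the case $B^\perp>0$ and then say you would ``upgrade to strict positivity of the Ricci curvature,'' but you give no mechanism for doing so, and you correctly note that $B^\perp>0$ does not imply $\Ric>0$ pointwise. The paper's proof supplies precisely this missing ingredient, and it does so \emph{without} first passing through the dichotomy $B^\perp>0$ versus a null distribution. The key is a PDE argument: writing $\lambda(x)$ for the smallest eigenvalue of $\Ric$, one uses the soliton identity $\Delta_f R_{\alpha\bar\beta}=R_{\alpha\bar\beta}-R_{\alpha\bar\beta\gamma\bar\delta}R_{\delta\bar\gamma}$ together with $B^\perp\ge 0$ to obtain the differential inequality $\Delta_f\lambda\le \lambda-\lambda^2$ in the barrier sense. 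A localized maximum principle using $f$ as an exhaustion (no curvature bound needed) then forces $\lambda\ge 0$. After that, the splitting is done via the kernel of $\Ric$ (which \emph{is} a parallel distribution, by a strong maximum principle for $(1,1)$-forms under $B^\perp\ge 0$), not via a null set of $B^\perp$.

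There is also a structural issue earlier in your outline. Your set $\mathcal N_x=\{X: R(X,\bar X,Y,\bar Y)=0\text{ for all }Y\perp X\}$ is not obviously a linear subspace, so invoking Hamilton's strong maximum principle to get a parallel distribution from it is not justified as stated; the standard tensor maximum principle yields parallelism of the kernel of an operator, and $B^\perp$ is a condition on pairs rather than an operator with a kernel. The paper sidesteps this entirely by working with $\Ric$ instead. Finally, even granting $\Ric>0$, compactness is not immediate: the paper uses the same inequality $\Delta_f\lambda\le\lambda$ to obtain a quantitative lower bound $\Ric\ge b/f$ and then adapts the Munteanu--Wang argument. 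Your invocation of a ``splitting theorem for complete shrinkers with $\Ric\ge 0$'' at the end presupposes exactly what needs to be proved.
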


Clearly one can not expect such a result  for general K\"ahler manifolds. For example, in \cite{NZ} a complete unitary symmetric metric was constructed on $\mathbb{C}^n$ with $B^\perp>0$ and $\Ric>0$. Apart from the motivation from the study of complex structure of K\"ahler manifolds with $B^\perp\ge 0$, the above result is  motivated by a recent work of Munteanu-Wang \cite{MW}, where a similar statement was proved for gradient shrinking Ricci soltions under the assumption that {\it  the sectional curvature is nonnegative and Ricci is positive.} In comparison, our result does not make any assumption on the Ricci curvature. In fact we prove that the Ricci curvature is nonnegative as a consequence of soliton equation even though the Ricci curvature a priori has nothing to do with $B^\perp$. Since in general $B^\perp\ge 0$ is a condition completely independent of $\Ric$, nor the holomorphic sectional curvature, not mentioning the sectional curvature, one can not derive our result from \cite{MW}.

Theorem \ref{thm:1} implies a  complete classification of K\"ahler-Ricci shrinkers with $B^\perp\ge0$  as a corollary.
\begin{thm}\label{thm:12}
Let $(M^n,g,f)$ be a complete gradient shrinking K\"ahler-Ricci soliton with $B^{\perp} \geq 0$.
Then the universal cover $\tilde{M}$ of $M$ split isometrically-holomorphically as $N_1 \times N_2 \times \cdots \times N_l \times \mathbb{C}^k$, where $N_i$ are compact irreducible Hermitian symmetric spaces.
\end{thm}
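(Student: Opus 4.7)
The plan is to first split off the maximal $\mathbb{C}^k$ factor from the universal cover of $M$, then apply Theorem \ref{thm:1} to the complementary factor, and finally invoke the de Rham decomposition together with the classification of compact K\"ahler manifolds with $B^\perp\ge 0$.

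First I would decompose the universal cover as $\tilde M = M_0 \times \mathbb{C}^k$, holomorphically and isometrically, where $M_0$ contains no $\mathbb{C}$-factor. The proof of Theorem \ref{thm:1} in Section 2 is expected to establish $\Ric \ge 0$ for a shrinker with $B^\perp \ge 0$ (even though $B^\perp$ and $\Ric$ are a priori unrelated). With $\Ric \ge 0$ in hand, any zero direction of $\Ric$ on a shrinker can be promoted to a parallel direction via the soliton equation and a Bochner-type argument, producing the maximal flat Euclidean factor. The soliton potential $\tilde f$ respects the splitting, so on $\mathbb{C}^k$ it is $\tfrac{1}{2}|z|^2$ up to a linear term, and $(M_0, g_0, f_0)$ is itself a complete gradient shrinking K\"ahler-Ricci soliton with $B^\perp \ge 0$.

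By construction the universal cover of $M_0$ has no $\mathbb{C}$-factor, so Theorem \ref{thm:1} applies and $M_0$ is compact. Being a factor of the simply connected $\tilde M$, $M_0$ is simply connected, and the de Rham decomposition yields $M_0 = N_1 \times \cdots \times N_l$ with each $N_i$ simply connected, compact, and irreducible; each $N_i$ inherits $B^\perp \ge 0$ and a gradient shrinker structure. Each $N_i$ is Fano because the soliton equation $\Ric + i\ddbar f = \omega$ forces $c_1(N_i) > 0$.

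It remains to identify each irreducible factor. The classification results of \cite{XChen,GuZhang,Wilking} for compact K\"ahler manifolds with NOB show that each irreducible $N_i$ is either biholomorphic to $\mathbb{P}^{n_i}$ or isometric to an irreducible compact Hermitian symmetric space of rank $\ge 2$. In the $\mathbb{P}^{n_i}$ case, uniqueness of K\"ahler-Ricci solitons on Fano manifolds forces the metric to be Fubini-Study, itself an irreducible compact Hermitian symmetric space, yielding the stated decomposition. I expect the main obstacle to lie in the first step: extracting $\Ric \ge 0$ from $B^\perp \ge 0$ on a shrinker and then cleanly isolating the $\mathbb{C}^k$ factor without any a priori curvature bound.
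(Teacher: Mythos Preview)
Your outline is correct and structurally matches the paper's: establish $\Ric\ge0$, split off the maximal $\mathbb{C}^k$, obtain compactness of the remaining factor via Theorem~\ref{thm:1}, then identify the compact piece. The two arguments diverge only in that last identification step. You invoke the compact NOB classification of \cite{GuZhang, Wilking} on each de Rham factor $N_i$ (either biholomorphic to $\mathbb{P}^{n_i}$ or isometric to a rank-$\ge 2$ Hermitian symmetric space) and then, in the $\mathbb{P}^{n_i}$ case, appeal to Tian--Zhu uniqueness of K\"ahler--Ricci solitons on Fano manifolds to force Fubini--Study. The paper instead observes that a compact shrinker is a compact ancient solution of the K\"ahler--Ricci flow, and Wilking's result (\cite{Wilking}, Section~4) upgrades $B^\perp\ge0$ on any compact ancient solution to nonnegative \emph{bisectional} curvature; the classification then falls out of the earlier shrinker result \cite{Ni-MRL}. (The paper also notes an alternative: when $k\ge1$, the observation that $M\times\mathbb{C}$ has NOB iff $M$ has nonnegative bisectional curvature does the upgrade directly.) Your route works but imports the external Tian--Zhu uniqueness theorem; the paper's stays entirely within Ricci-flow/curvature-cone arguments and \cite{Ni-MRL}.

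One minor imprecision: the $\mathbb{C}^k$-splitting is not obtained via ``the soliton equation and a Bochner-type argument'' in the way you describe. The paper uses the strong maximum principle for nonnegative $(1,1)$-forms under $B^\perp\ge0$ from \cite{NNiu} to show that $\ker\Ric$ is invariant under parallel transport; the NOB hypothesis is used essentially at this point, and the soliton equation together with $\Ric\ge0$ alone would not suffice.
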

A classification {\it for gradient shrinking Ricci solitons with nonnegative curvature operator } was obtained  in \cite{MW}.
There is also an earlier related result of the second author \cite{Ni-MRL} asserting the compactness under the assumption that the bisectional curvature is positive, and a classification of K\"ahler-Ricci shrinkers with nonnegative bisectional curvature. Theorem \ref{thm:12} is a generalization of  these two previous results in the K\"ahler category. Both the work of \cite{MW} and \cite{Ni-MRL}  were motivated by Perelman's result \cite{Perelman}  asserting that {\it any three-dimensional shrinking solitons with bounded positive sectional curvature must be compact}. This result of Perelman together with the work of Hamilton (as well as Hamilton-Ivey pinching) provides a complete classification of shrinkers in three dimensions. See also \cite{NW} for a proof of a generalization of this result of Perelman via a PDE approach. As the compactness results of \cite{MW} and \cite{Ni-MRL}, Theorem \ref{thm:1} provides another high-dimensional  generalization of  Perelman's above statement for the K\"ahler case.

The method employed in proving Theorem \ref{thm:12} can also be adapted to prove a classification result, Theorem \ref{thm:31},  for shrinkers with weakly  PIC$\mbox{}_1$ condition (hence also  gives a similar result for shrinkers with 2-nonnegative curvature operator since $2$-nonnegativity of the curvature operator is stronger than PIC$\mbox{}_1$). In fact what is proved  in Section 3 is a bit more general.   In Section 4, the method is extended further to prove that any shrinkers with weakly PIC must have $2$-nonnegative Ricci curvature. In dimension four, joint with K. Wang,  a classification result for  shrinkers under weakly PIC condition \cite{LNW} has been obtained. Hence in this paper, for the discussion of shrinkers of weakly PIC  $n=\dim_{\mathbb{R}}(M)\ge 5$ is assumed. Note that in a recent work \cite{BCW}, it was shown that for $n\ge 7$, $R$ being of  weakly PIC implies that the Ricci curvature is $3$-nonnegative. There have been many works on gradient shrinking solitons since \cite{Perelman}.  One can refer to \cite{MW} and the book \cite{Chowetc-4} for  some  comprehensive descriptions of gradient K\"ahler-Ricci (Ricci)  solitons and  some known  results on shrinkers other than the ones mentioned here.

In the later sections we extend some of results proved for shrinkers in the earlier sections to ancient solutions of the Ricci and K\"ahler-Ricci flow. In particular we show that
\begin{thm}
Assume that $(M, g(t))_{t\in (-\infty, 0)}$ is an ancient solution to the Ricci flow or K\"ahler-Ricci flow. Then

(i) If $B^\perp\ge 0$, then $(M, g(t))$ has nonnegative bisectional curvature;

(ii) If $(M, g(t))$ has weakly PIC$\mbox{}_1$, then it has nonnegative complex sectional curvature;

(iii) If $(M, g(t))$ has weakly PIC, then $\Ric$ is $2$-nonnegative.
\end{thm}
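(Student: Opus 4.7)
The plan is to upgrade, in each of the three parts, the hypothesized preserved curvature condition to the stronger conclusion by adapting to the ancient setting the parabolic-maximum-principle arguments developed in Sections~2--4 for shrinkers. In each case the hypothesis defines a closed convex cone $\mathcal{C}_1$ of algebraic curvature tensors (nonnegative $B^\perp$, weakly PIC$\mbox{}_1$, or weakly PIC), the conclusion defines a smaller closed convex cone $\mathcal{C}_2\subset\mathcal{C}_1$ (nonnegative bisectional, nonnegative complex sectional, or $2$-nonnegative $\Ric$), and both cones are invariant under the (K\"ahler-)Ricci flow. The guiding principle is a strong maximum principle for ancient solutions: if the Hamilton quadratic $Q(R)=R^{2}+R^{\#}$ (or its K\"ahler analogue) points strictly into $\mathcal{C}_2$ at every boundary configuration $R\in\p\mathcal{C}_2\cap\mathcal{C}_1$, then the ancient existence of the flow forces $R(t)\in\mathcal{C}_2$ for all $t$.

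Concretely, I would introduce a scalar deviation function $\phi(x,t)$ measuring the signed distance of $R(x,t)$ from $\mathcal{C}_2$: the minimum bisectional curvature $\inf_{|X|=|Y|=1,\,X,Y\in T'_xM}R(X,\bar X,Y,\bar Y)$ for (i); the minimum complex sectional curvature $\inf_{|Z|=1,\,Z\in T_xM\otimes\C}R(Z,\bar Z,Z,\bar Z)$ for (ii); and the sum of the two smallest eigenvalues of $\Ric$ for (iii). Using $\p_tR=\D R+Q(R)$ together with parallel transport of an extremizing pair at the critical configuration, $\phi$ satisfies, in the viscosity sense, a differential inequality of the form
\[
\lf(\frac{\p}{\p t}-\D\ri)\phi\;\geq\;-C\,\phi,
\]
with $C$ depending on the ambient curvature. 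The essential algebraic input is that at an extremizer with $\phi\le 0$, the reaction $Q(R)$ restricted to the extremizing direction is nonnegative -- a refinement, under the hypothesis $R\in\mathcal{C}_1$, of the cone-preservation of $\mathcal{C}_2$, and the exact parabolic analogue of the identities established for shrinkers in the earlier sections.

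Given such an inequality on the ancient interval $(-\infty,0)$, the parabolic maximum principle yields $\phi\ge 0$, i.e.\ $R\in\mathcal{C}_2$. The main obstacle is the noncompact, unbounded-curvature setting, in which the classical maximum principle does not apply directly, together with the algebraic verification of the inward-pointing property of $Q(R)$ at the various boundary configurations (particularly for weakly PIC$\mbox{}_1$ vs. complex sectional and for weakly PIC vs. $2$-nonnegative Ricci). I expect to handle the analytic obstruction as in the shrinker case: localize via a spatial cutoff together with Perelman's pseudolocality, and then exploit the infinite past to absorb the cutoff error upon letting the cutoff scale to infinity.
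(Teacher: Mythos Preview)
Your general strategy is right, and for part (iii) it matches the paper closely. But there is a genuine gap in the differential inequality you propose. You claim only a linear inequality
\[
\lf(\frac{\p}{\p t}-\D\ri)\phi\;\geq\;-C\,\phi
\]
with $C$ depending on the ambient curvature. This is too weak to reach a contradiction from ancient existence without a curvature bound: after localization you get at best $\frac{d^+}{d\tau}Q\le CQ+\text{error}$, which yields only exponential growth of $|Q|$ in backward time, not a finite-time blow-up. The paper's proofs hinge on the \emph{quadratic} inequality
\[
\lf(\frac{\p}{\p t}-\D\ri)\phi\;\geq\;\phi^{2}\qquad\text{whenever }\phi\le 0,
\]
from which the localized minimum $Q(\tau)$ satisfies $\frac{d^+}{d\tau}Q\le-\tfrac14 Q^2$ and blows up at some finite $\tau\le -4/Q(0)$, contradicting ancientness. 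For (iii) this quadratic bound comes directly from the computation in Proposition~\ref{prop:42} and Lemma~\ref{lem:41}. For (i) and (ii) it is \emph{not} obtained from the naive deviation functions you wrote down: the paper follows \cite{BCW} and takes $\phi(x,t)=\min\{0,\inf_{v\in\Sigma}\Rm(v,\bar v)\}$ with $\Sigma$ the set of rank-one matrices in $\mathfrak{gl}(n,\C)$ (resp.\ $\so(n,\C)$) whose eigenvalues have modulus $\le 1$. The eigenvalue constraint, together with the decomposition $v=u+w$ with $|u|\le 1$ and $w$ nilpotent, is exactly what forces $\Rm^{2}(v,\bar v)\ge\phi^{2}$; for the usual minimum bisectional or complex sectional curvature over unit vectors this estimate is not available.

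A second, smaller correction: the localization on the noncompact manifold is not via pseudolocality but via Perelman's distance-distortion estimate (Lemma~8.3 of \cite{Perelman}), which gives $(\p_\tau+\D)d_\tau\le (2n-1)(\tfrac23 Kr_0+r_0^{-1})$ outside $B_\tau(x_0,r_0)$ once $\Ric\le (2n-1)K$ on that ball. This is combined with a cutoff $\psi=\eta\bigl(\frac{d_\tau-c\tau/r_0}{Br_0}\bigr)$ to make the quadratic term dominate the cutoff error on a fixed finite interval $[0,T_0]$ chosen a priori from $|\phi(x_0,0)|$. Letting the cutoff radius tend to infinity plays no role; the argument is entirely local in backward time.
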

Note that the part (i) was known for compact manifolds \cite{Wilking}, and the  part (ii) of the above was proved under additional assumption of  bounded curvature  recently in \cite{BCW}.  The main feature of our   results is that no upper curvature bound is assumed. This new feature could be desirable in applications. To achieve  the result without curvature bound  we  apply PDE  arguments via  differential inequalities on various geometric quantities and the viscosity consideration, since
 the approach via the dynamic version of Hamilton's maximum principle reducing the PDE to an ODE by dropping the diffusion term, as done  in Section 1 of  \cite{BW} and Theorem 12.38 of \cite{Chowetc}, has limited effect  that a curvature bound assumption is always needed.

Finally as an application of part (i) of the above result,  we extend the recent important  result of Balmer-Cabezas-Rivas-Wilking on the Ricci flow under almost nonnegative curvature conditions to include the case of the K\"ahler-Ricci flow under the almost nonnegative orthogonal bisectional curvature.

\begin{theorem}\label{thm:anob}
 For any $n\ge 2, \ne 3 $ and $\nu_0$, there exist positive constants $C=C(n, \nu_0)$ and $\tau=\tau(n, \nu_0)$ such that if $(M, g)$ is K\"ahler manifold with bounded curvature, $\dim_{\mathbb{C}}(M)=n$,
 $$Vol_g(B_g(p, 1))\ge \nu_0, \forall p\in M,$$
 and $\Rm+\epsilon \operatorname{id}$ has NOB for some $\epsilon \in [0, 1]$, then the K\"ahler-Ricci flow exists on $[0, \tau]$ with $\Rm_{g(t)}+C \epsilon \operatorname{id}$ has NOB and $|\Rm|\le \frac{C}{t}$ for all $t\in (0, \tau]$.
\end{theorem}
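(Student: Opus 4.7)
The proof adapts the strategy of Bamler--Cabezas-Rivas--Wilking (BCW) to the K\"ahler--Ricci flow. Two facts make the adaptation natural: (a) the NOB cone is invariant under the K\"ahler--Ricci flow by \cite{XChen, GuZhang}, so that shifted cones of the form $\{\Rm:\Rm+\epsilon\operatorname{id}\in\text{NOB}\}$ are expected to be almost preserved; and (b) Part (i) of the preceding theorem promotes NOB on any ancient K\"ahler--Ricci flow to the strictly stronger condition of nonnegative bisectional curvature, where a rich structure theory is available. Without (b) a blow-up limit would only have NOB, which by itself does not give the rigidity needed to close the argument.

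I would argue by contradiction. Fix $n\ge 2$, $n\ne 3$ and $\nu_0$, and suppose no pair $(C,\tau)$ satisfies the conclusion. Then one produces a sequence $(M_i, g_i)$ of complete K\"ahler $n$-manifolds with bounded curvature, $\operatorname{Vol}_{g_i}(B_{g_i}(p,1))\ge \nu_0$, and $\Rm_{g_i}+\epsilon_i\operatorname{id}$ of NOB type with $\epsilon_i\in[0,1]$, for which the K\"ahler--Ricci flow starting from $g_i$ first violates one of the three quantitative assertions---short-time existence on $[0,\tau]$, the curvature decay $|\Rm|\le C/t$, or the preservation of $\Rm+C\epsilon\operatorname{id}\in\text{NOB}$---at some space-time point $(x_i,t_i)$, with the failure constants tending to infinity. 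Shi's short-time existence and a standard point-picking step (of the type used by Perelman) give uniform curvature control on a parabolic neighborhood of $(x_i,t_i)$ relative to the curvature scale there. Parabolically rescaling by the curvature scale at $(x_i,t_i)$ and translating $t_i\mapsto 0$, one obtains a sequence of pointed K\"ahler--Ricci flows defined on longer and longer backward time intervals, with curvature locally bounded and uniformly $\kappa$-noncollapsed at unit scale by Perelman's noncollapsing estimate (whose hypotheses hold because the $g_i$ have bounded curvature and the lower volume bound is scaling-invariant only up to a fixed factor absorbed into $\nu_0$).

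Hamilton's compactness theorem then yields a smooth pointed limit, a nontrivial complete ancient K\"ahler--Ricci flow. Because the rescaling absorbs $\epsilon_i\to 0$, the limit has NOB, and Part (i) of the preceding theorem upgrades this to nonnegative bisectional curvature. Using the classification of ancient K\"ahler--Ricci flows with nonnegative bisectional curvature---splittings of $\mathbb{C}$-factors, rigidity of the asymptotic shrinker via Theorem \ref{thm:12}, and the relation between the curvature scale at $(x_i,t_i)$ and the scale of the nearest singular neighborhood---one contradicts the ``first-failure'' nature of the points $(x_i,t_i)$. The main obstacle will be this last step, namely extracting a smooth ancient Cheeger--Gromov limit when only a volume bound and the shifted-NOB condition are available initially; the BCW Riemannian argument handles this through an intricate interplay of pseudolocality and a dynamic cone construction, and the Kähler-specific rigidity from (b) is what allows the corresponding step here to be closed off, bypassing the more delicate weakly PIC$_1$ arguments required in the Riemannian case.
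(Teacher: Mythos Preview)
Your high-level outline matches the BCW template and you correctly identify Proposition \ref{prop:61} (Part (i)) as the replacement for BCW's ancient-solution input. However, you have skipped the step that constitutes essentially the entire content of the paper's proof.

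The BCW machinery does not run on cone invariance alone. For each cone $\mathcal{C}$ they treat, BCW must verify a quantitative differential inequality (their Lemma 2.3) for the defect function
\[
\ell(x,t):=\inf\{\alpha : \Rm_{(x,t)}+\alpha\,\operatorname{id}\in\mathcal{C}\},
\]
namely $(\partial_t-\Delta)\ell \le \Scal\cdot\ell + C(n)\ell^2$ in the barrier sense. This is what controls how the shifted cone degrades under the flow and is the input to both the curvature-decay and the almost-preservation conclusions; the blow-up/pseudolocality argument in BCW Sections 3--4 takes this inequality as a hypothesis. Since NOB is a K\"ahler cone not on BCW's list, the inequality must be checked from scratch, and this is exactly what the paper does: using the evolution equation (\ref{eq:51}), first- and second-variation identities at the minimizing orthonormal pair $\{E_1,E_2\}$ (giving $R_{j\bar 1 2\bar 2}=R_{j\bar 2 1\bar 1}=0$ and $\sum_{p,q\ge 3} R_{1\bar 1 q\bar p}R_{p\bar q 2\bar 2}-|R_{1\bar p 2\bar q}|^2\ge 0$), and an algebraic lower bound on $\sum_i R_{i\bar i i\bar i}$ in terms of $R_{1\bar 1 2\bar 2}$. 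The restriction $n\ne 3$ enters precisely in this last algebraic step. Your sentence ``shifted cones \dots\ are \emph{expected} to be almost preserved'' is where the actual work lies.

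A smaller point: in the contradiction step, what BCW actually needs from the ancient limit is not a full classification but the vanishing of the asymptotic volume ratio. The paper supplies this via the second assertion of Proposition \ref{prop:61} (nonnegative bisectional curvature plus bounded curvature $\Rightarrow \mathcal{V}=0$, from \cite{Ni-MRL}); invoking Theorem \ref{thm:12} on shrinkers is neither necessary nor sufficient here.
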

As a consequence of the above one can have a similar result as Corollary 3 of \cite{BCW}. Namely {\it for given $D>0$, $v_0>0$, there exists an $\epsilon=\epsilon(D, v_0, n)$ such that if a K\"ahler manifold $(M^n, g)$ satisfies that $Vol(M)\ge v_0$, $Diam(M)\le D$, and $\Rm+\epsilon \operatorname{id}$ has NOB, then $M$ admits a K\"ahler metric with NOB. In particular any such a simply-connected manifold with $b_2=1$ must be diffeomorphic to a Hermitian symmetric space of compact type.}  For bisectional curvature, the topological consequence of almost nonnegative K\"ahler manifolds was obtained earlier in \cite{Fang} under additional assumption of a uniform  bound of the sectional curvature. The Corollary 3 of \cite{BCW} has a similar topological consequence for K\"ahler manifolds with almost nonnegative bisectional curvature satisfying the same diameter and volume conditions.
As consequences of the classifications of shrinkers we also derive some classification results on closed
type-I noncollapsing ancient solutions.

\section{Proof of Theorem \ref{thm:1}}

We refer the reader to \cite{Chowetc-4} and Munteanu-Wang's paper \cite{MW} for basic equations concerning the gradient shrinking solitons.  We make the normalization on $f$ so that
$$ \Scal +|\nabla f|^2 =f,$$
where $\Scal$ denotes the scalar curvature.
Let $\Delta_f (\cdot) =\Delta (\cdot)  -\langle \nabla (\cdot), \nabla f\rangle$. The key of the proof is the following result.

\begin{prop}\label{prop:11}
Let $(M,g,f)$ be a complete shrinking K\"ahler-Ricci soliton with $B^{\perp}\geq 0$. Let $\lambda(x)$ be the minimum eigenvalue of the Ricci tensor at $x$. Then
\begin{equation}\label{eq:21}
    \Delta_f \lambda  \leq \lambda -\lambda^2,
\end{equation}
in the barrier or viscosity sense.
\end{prop}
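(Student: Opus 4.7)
The proof hinges on the Lichnerowicz-type identity for the Ricci tensor on a K\"ahler-Ricci shrinker. From the soliton equation $R_{\alpha\bar\beta}+f_{\alpha\bar\beta}-g_{\alpha\bar\beta}=0$, together with the K\"ahler second Bianchi identity and the formula for the evolution of Ricci under the K\"ahler-Ricci flow, one derives the standard identity (see \cite{Chowetc-4})
\[
\Delta_f R_{\alpha\bar\beta} \;=\; R_{\alpha\bar\beta} \;-\; R_{\alpha\bar\beta\gamma\bar\delta}\,R_{\delta\bar\gamma}.
\]
The plan is to convert this tensorial identity into a pointwise barrier inequality for the scalar function $\lambda$ via a minimum-eigenvector extension.

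Fix $x_0\in M$, and choose a unitary frame $\{e_1,\ldots,e_n\}$ at $x_0$ diagonalizing $\Ric(x_0)$, with eigenvalues $\lambda_1\le \lambda_2\le\cdots\le\lambda_n$, so that $\lambda(x_0)=\lambda_1$ and $v:=e_1$ is a minimizing eigenvector. Extend $v$ to a vector field $V$ on a normal neighborhood of $x_0$ by parallel transport along the radial geodesics emanating from $x_0$; this extension satisfies $|V|\equiv 1$ and $\nabla V(x_0)=0$. Define the smooth function
\[
\phi(x) \;:=\; R_{\alpha\bar\beta}(x)\,V^\alpha(x)\,\bar V^\beta(x).
\]
Because $V$ is unit-normed, $\phi(x)$ is a Rayleigh quotient of $\Ric(x)$; hence $\phi\ge \lambda$ everywhere with equality at $x_0$, making $\phi$ a smooth upper barrier for $\lambda$ at $x_0$.

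Next I compute $\Delta_f\phi(x_0)$. The Leibniz expansion decomposes into three pieces: (i) the leading contribution $(\Delta_f R_{\alpha\bar\beta})v^\alpha\bar v^\beta$; (ii) first-order cross terms involving $\nabla V$, which vanish since $\nabla V(x_0)=0$; and (iii) contributions from $\nabla^2 V$. Using the K\"ahler Ricci commutation $[\nabla_\gamma,\nabla_{\bar\delta}]V^\alpha=R^\alpha_{\beta\gamma\bar\delta}V^\beta$ together with the second-order Taylor expansion of the radial parallel extension $V$, one verifies that the symmetric sum $\sum_\gamma\bigl(\nabla_\gamma\nabla_{\bar\gamma}+\nabla_{\bar\gamma}\nabla_\gamma\bigr)V^\alpha|_{x_0}$ vanishes, so the $\nabla^2 V$ contribution cancels. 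Substituting the Lichnerowicz identity and working in the diagonal frame then yields
\[
\Delta_f\phi(x_0) \;=\; (\Delta_f R_{\alpha\bar\beta})\,v^\alpha\bar v^\beta\Big|_{x_0} \;=\; \lambda \;-\; \sum_{\gamma=1}^n R_{1\bar 1\gamma\bar\gamma}(x_0)\,\lambda_\gamma.
\]

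Finally I invoke $B^\perp\ge 0$: for every $\gamma\ne 1$, the quantity $R_{1\bar 1\gamma\bar\gamma}(x_0)$ is precisely the orthogonal bisectional curvature of the pair $(e_1,e_\gamma)$, hence nonnegative. Combining this with $\lambda_\gamma\ge\lambda_1=\lambda$ and the trace identity $\sum_\gamma R_{1\bar 1\gamma\bar\gamma}=R_{1\bar 1}=\lambda$, I get
\[
\sum_\gamma R_{1\bar 1\gamma\bar\gamma}\,\lambda_\gamma \;=\; \lambda\sum_\gamma R_{1\bar 1\gamma\bar\gamma} \;+\; \sum_{\gamma\ne 1} R_{1\bar 1\gamma\bar\gamma}(\lambda_\gamma-\lambda) \;\ge\; \lambda^2.
\]
Therefore $\Delta_f\phi(x_0)\le \lambda-\lambda^2$, which is exactly \eqref{eq:21} in the barrier (hence viscosity) sense. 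The main technical obstacle is verifying the cancellation of the $\nabla^2 V$ contribution at $x_0$; this is the one place where the specific choice of parallel radial extension and the K\"ahler commutation identities enter in a nontrivial way. Everything else reduces to the linear-algebraic eigenvalue manipulation controlled by $B^\perp\ge 0$.
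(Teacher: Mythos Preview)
Your proof is correct and follows essentially the same route as the paper: the same Lichnerowicz identity $\Delta_f R_{\alpha\bar\beta}=R_{\alpha\bar\beta}-R_{\alpha\bar\beta\gamma\bar\delta}R_{\delta\bar\gamma}$, the same diagonalizing frame, and the same use of $B^\perp\ge 0$ together with $\lambda_\gamma\ge\lambda_1$ and $\sum_\gamma R_{1\bar 1\gamma\bar\gamma}=R_{1\bar 1}$ to bound the reaction term by $\lambda^2$. The paper simply writes the inequality for $R_{1\bar 1}$ and invokes ``a barrier argument'' in one line, whereas you spell out the upper barrier $\phi=\Ric(V,\bar V)$ via the radial parallel extension; your claim that the $\nabla^2 V$ contribution vanishes at $x_0$ is correct (differentiating the identity $x^i\nabla_i V\equiv 0$ twice and evaluating at the center gives $(\nabla_j\nabla_k+\nabla_k\nabla_j)V(x_0)=0$), so the explicit construction goes through.
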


\begin{proof}
Recall \cite{Hamilton4} that on a K\"ahler-Ricci shrinker, the Ricci tensor satisfies
$$\Delta_f R_{\a \bbar} =R_{\a \bbar} -R_{\abb \gbd } R_{\delta \bar\g}. $$
For any $p\in M$, choosing unitary frame $\{e_{\a}\}_{\a=1}^{n}$ at $p$ such that $R_{\abb} =\l_{\a} \delta_{\a \b}$ with $\l_1 \leq \l_2 \leq \cdots \leq \l_n$ gives at $p$,
\begin{eqnarray*}
\Delta_f R_{1\bar1} &\leq&  R_{1\bar1} -R_{1\bar1 \g \bar\g} R_{\g \bar\g} \\
&=& R_{1\bar1} -R_{1\bar1 1 \bar1} R_{1\bar 1} - \sum_{\g=2}^n R_{1\bar1 \g \bar\g}R_{\g \bar\g} \\
&\leq & R_{1\bar1} -R_{1\bar1 1 \bar1} R_{1\bar 1} - \sum_{\g=2}^n R_{1\bar1 \g \bar\g} R_{1 \bar 1} \\
&=& R_{1\bar 1} -(R_{1\bar1})^2.
\end{eqnarray*}
Applying a barrier argument in case $R_{1\bar1}$ is not smooth we have the result.
\end{proof}

The next step is to use the equation satisfied by $\l$, namely (\ref{eq:21}), to show that $\l \geq 0$. The proof given below for a shrinking soliton follows a localization technique  which of course has a long root in the study of PDEs.  Recent adaptation of this technique can be found in \cite{BChen, Yokota} etc. An exposition of it can also be found in the book \cite{Chowetc-4} (Chapter 27, Theorem 27.2). For shrinking solitons, our proof below is a bit cleaner.

\begin{prop}
Let $(M,g,f)$ be a complete gradient shrinking K\"ahler-Ricci soliton with $B^{\perp}\geq 0$. Let $\lambda(x)$ be minimum eigenvalue of Ricci tensor at $x$. Then $\lambda \geq 0$ on $M$. In particular, $M$ has $\Ric\geq 0$.
\end{prop}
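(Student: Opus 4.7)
The plan is to argue by contradiction: assume $\lambda(x_0) < 0$ at some $x_0 \in M$ and derive a contradiction via a weighted maximum principle applied to a localizing perturbation of $\lambda$. For each small $\epsilon > 0$, introduce
\[
u_\epsilon := \lambda + \epsilon f.
\]
Tracing the soliton equation $R_{\alpha\bar\beta}+f_{\alpha\bar\beta}=g_{\alpha\bar\beta}$ and using the normalization $\Scal+|\nabla f|^2=f$ yields the identity $\Delta_f f = n - f$ (up to a convention-dependent constant that does not affect the argument). Combined with Proposition~\ref{prop:11}, this gives the viscosity inequality
\[
\Delta_f u_\epsilon \leq (\lambda - \lambda^2) + \epsilon(n - f).
\]

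Next I would produce a point $x_\epsilon$ where $u_\epsilon$ attains (or approximately attains) its infimum, so that $\Delta_f u_\epsilon(x_\epsilon) \geq 0$ in the viscosity sense. Since the Bakry--Emery Ricci curvature of the soliton equals $g$ and $f$ grows quadratically by the Cao--Zhou estimate, the weighted Omori--Yau principle is available; alternatively, the localization cut-off technique of \cite{BChen,Yokota} produces such a point directly. Using $f \geq 0$ (a consequence of $\Scal \geq 0$ via Chen's theorem together with the normalization), the inequality at $x_\epsilon$ forces
\[
\lambda(x_\epsilon)^2 - \lambda(x_\epsilon) \leq \epsilon n,
\]
so the quadratic formula gives $\lambda(x_\epsilon) \geq \tfrac{1}{2}\bigl(1-\sqrt{1+4\epsilon n}\bigr) = -O(\epsilon)$.

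Comparing values at $x_\epsilon$ and $x_0$ then completes the proof. Indeed, $u_\epsilon(x_\epsilon) \leq u_\epsilon(x_0) = \lambda(x_0)+\epsilon f(x_0)$ by near-minimality, while $u_\epsilon(x_\epsilon) \geq \lambda(x_\epsilon) \geq -O(\epsilon)$ since $f \geq 0$. Hence $-O(\epsilon) \leq \lambda(x_0)+\epsilon f(x_0)$, and sending $\epsilon \to 0$ gives $\lambda(x_0) \geq 0$, contradicting the assumption.

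The main obstacle is the middle step: without any a priori lower bound on $\lambda$, one must rigorously justify the existence of the near-minimizer $x_\epsilon$ (or of a suitable Omori--Yau sequence). Since $\lambda$ is only Lipschitz (an eigenvalue of a smooth tensor), this has to be carried out in the viscosity framework. The key point is that the quadratic growth of $f$ allows the perturbation $\epsilon f$ to dominate $-\lambda$ outside a large compact set, thereby localizing the infimum; this is precisely the input that the technique of \cite{BChen,Yokota} is tailored to provide, and it is where the soliton hypothesis (through $f$ and $\Ric_f = g$) enters decisively.
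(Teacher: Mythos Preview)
Your strategy—additive perturbation $u_\epsilon=\lambda+\epsilon f$ and a weighted maximum principle—is close in spirit to the paper's, but the gap you yourself flag is real and is not resolved by the fixes you suggest. Both the Omori--Yau principle and the statement ``$\epsilon f$ dominates $-\lambda$ outside a large compact set'' require $u_\epsilon$ to be bounded below (equivalently, $-\lambda \le C f$ for some $C$), and there is no a priori growth control on $-\lambda$ here: $B^\perp\ge 0$ says nothing about holomorphic sectional curvature or Ricci directly, and $\Scal\ge 0$ only bounds the \emph{sum} of the Ricci eigenvalues. The references \cite{BChen,Yokota} do not furnish such a bound; what they do is precisely the multiplicative cutoff argument the paper carries out, which is a different mechanism from your additive one.

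The paper avoids the issue by working with $Q=\psi\lambda$ where $\psi=\eta(f/R)$ is a compactly supported cutoff built from the exhaustion $f$. Then $Q$ has compact support, so its minimum is attained for free, with no appeal to Omori--Yau and no need for any growth bound on $\lambda$. At that minimum one uses $\Delta_f\lambda\le \lambda-\lambda^2$, the identities $|\nabla f|^2\le f$ and $\Delta_f f=\tfrac{n}{2}-f$, and the cutoff bounds $|\eta''|+2(\eta')^2/\eta\le C$ to obtain
\[
0\le Q\left(-\tfrac{C'}{R}+\psi-Q\right),
\]
which, since $Q\le -a<0$ independently of $R$, gives a contradiction as $R\to\infty$. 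If you want to salvage the additive route, you would first need an independent argument that $\lambda\ge -Cf$ (or some comparable coercivity), and none is apparent without essentially reproducing the cutoff computation.
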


\begin{proof}
We may assume $M$ is noncompact since the result follows immediately from the maximum principle for the compact case.   We prove the result by contradiction. Assume that $\sup_{K}\lambda\le -a<0$ for a compact subset $K$ sufficiently large with $a$ being a positive constant.
The potential function $f$ is an exhaustion function with the estimate \cite{CaoZhu} in terms of distance function $r(x)$ to a fixed point:
\begin{equation}\label{eq:22}
\frac{1}{4}\left(r(x)-c_1\right)^2-C_3 \le f(x)\le \frac{1}{4}\left(r(x)+c_2\right)^2.
\end{equation}
Hence for sufficiently large $R$, $D(R)=\{x\, |\, f(x)\le R\}$ contains $K$.

Let $\eta:[0,\infty) \to [0,1]$  be a smooth nonincreasing cutoff function with $\eta(s)=1$ for $0\leq s \leq 1$, $\eta(s)=0$ for $s \geq 2 $, and $|\eta''|+2\frac{(\eta')^2}{\eta} \leq C$, where $C$ is a universal constant. Let $\psi=\eta\left(\frac{f}{R}\right)$.
Consider $Q=\psi(x)\cdot \lambda(x)$. For $R$ large enough,  $\min_{D(R)}Q\le -a$.  We shall derive a contradiction by applying the maximum principle with a cut-off.

First direct calculation shows that
$$
\Delta_f \psi=\frac{\eta''}{R^2}|\nabla f|^2 +\frac{\eta'}{R}\Delta_f f; \quad |\nabla \psi|^2=\frac{(\eta')^2}{R^2}|\nabla f|^2.
$$
Now applying  the maximum principle at $x_0$ where the minimum of $Q$ (is $\le -a$ as we have seen above) is attained, we have at $x_0$ that
\begin{eqnarray*}
0&\le& \Delta_f Q =  \lambda \Delta_f  \psi  +\psi \Delta_f \lambda -2\frac{|\nabla \psi|^2}{\psi}\lambda\\
&\le& \lambda \left(\frac{\eta''}{R^2}|\nabla f|^2 +\frac{\eta'}{R}\Delta_f f\right)+\psi(\lambda-\lambda^2)-2\lambda \frac{(\eta')^2}{R^2}|\nabla f|^2\\
&\le& (-C\lambda)\frac{|\nabla f|^2}{R^2}+ (\eta'\lambda)\frac{\frac{n}{2}-f}{R} +\psi(\lambda-\lambda^2)\\
&\le&\frac{-C\lambda}{R}+(-\lambda) \frac{\frac{n}{2}+\|f^{-}\|_\infty}{R} +\psi(\lambda-\lambda^2).
\end{eqnarray*}
Here $f^{-}$ denotes the negative part of $f$.
Multiplying both sides of the above estimate  by $\psi$ we have that
\begin{equation}\label{eq:23}
0\le Q\left(\frac{-C'}{R}+\psi -Q\right)
\end{equation}
with $C'$ independent of $R$. Letting $R\to \infty$, this implies that $-a\ge \frac{C'}{R}\to 0$, a contradiction.
\end{proof}

Now evoking the strong maximum principle proved for general $(1,1)$-form $\eta\ge 0$ in \cite{NNiu}, under the assumption of $B^\perp\ge 0$, the kernel of $\Ric$ is invariant under the  parallel transport.  Hence the distribution (of dimension $k$) will split off a factor of $\mathbb{C}^k$ isometrically. If we also apply the argument of Munteanu-Wang we can have the following corollary, from which Theorem \ref{thm:1} follows.

\begin{corollary}
Let $(M^n, g, f)$ be complete K\"ahler-Ricci shrinker with $B^\perp\ge 0$. Then the finite covering universal cover $\tilde{M}=M_1^{n-k}\times \mathbb{C}^k$ with $M_1$ being a K\"ahler-Ricci shrinker of $\Ric>0$ and compact.
\end{corollary}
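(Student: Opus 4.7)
The plan is to combine the strong maximum principle for nonnegative $(1,1)$-forms (which gives invariance of $\ker\Ric$ under parallel transport), a de Rham type splitting, and the Munteanu--Wang compactness argument for shrinkers with $\Ric>0$.

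First, by the previous proposition, $\Ric\ge 0$ on $M$, so $\Ric$ defines a nonnegative $(1,1)$-form on $M$. Combined with the shrinker equation $R_{\alpha\bbar}+f_{\alpha\bbar}=g_{\alpha\bbar}$ and the identity $\Delta_f R_{\alpha\bbar}=R_{\alpha\bbar}-R_{\alpha\bbar\gamma\bar\delta}R_{\delta\bar\gamma}$ used in Proposition \ref{prop:11}, the evolution inequality for $\Ric$ places it in the setting of the strong maximum principle of \cite{NNiu}. That result, under $B^\perp\ge 0$, asserts that the null distribution $\mathcal{K}_x:=\ker \Ric_x\subset T'_xM$ has constant complex dimension $k$ and is invariant under parallel transport. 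Hence $\mathcal{K}$ is a parallel holomorphic subbundle of $T'M$, and its orthogonal complement $\mathcal{K}^\perp$ is also parallel.

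Lifting to the universal cover $\tilde M$ and applying the de Rham decomposition theorem in the K\"ahler category, the two parallel, complementary holomorphic distributions integrate to a global holomorphic isometric splitting $\tilde M=M_0\times M_1^{n-k}$, where the tangent bundle of $M_0$ is $\mathcal{K}$ and the tangent bundle of $M_1$ is $\mathcal{K}^\perp$. Since $\Ric\equiv 0$ on $M_0$ and $M_0$ inherits $B^\perp\ge 0$ (equivalently, nonnegative complex sectional curvature in the factor directions after using $\mathcal{K}=\ker\Ric$), one checks via the standard Ricci-flat K\"ahler with $B^\perp\ge 0$ argument that $M_0$ is flat; being complete and simply connected, $M_0=\mathbb{C}^k$. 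On the other factor, $\Ric>0$ by the definition of $\mathcal{K}$.

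It remains to show that $M_1$ is a compact shrinking K\"ahler-Ricci soliton. The soliton structure descends: by the splitting the potential $f$ decomposes as $f(x_0,x_1)=f_0(x_0)+f_1(x_1)$ with each piece satisfying the corresponding soliton equation, and on the flat factor $f_0$ is a quadratic function while $f_1$ makes $(M_1,g_1,f_1)$ a shrinker with $\Ric>0$. To conclude compactness of $M_1$, the plan is to mimic the Munteanu--Wang argument in \cite{MW}: on a shrinker with $\Ric>0$, the strict positivity of $\Ric$ combined with $\Ric+\H(f)=g$ forces $\H(f)$ to be bounded above, which together with the quadratic growth estimates (\ref{eq:22}) on $f$ shows that the sublevel sets of $f$ cannot escape to infinity; equivalently, assuming noncompactness one produces a ray along which $\Ric$ would degenerate, contradicting the strict inequality $\Ric>0$ (which, being a continuous nonnegative invariant satisfying $\Delta_f\lambda\le\lambda-\lambda^2$, admits a uniform positive lower bound on each level set once it is positive somewhere). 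Finally, $M$ is a finite quotient of $\tilde M$ because after factoring out $\mathbb{C}^k$ the relevant quotient lies in $M_1$, which is compact, so the deck group acts with finite stabilizer on the $\mathbb{C}^k$ factor in the usual Cheeger--Gromoll/soliton fashion.

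The main obstacle, and the step requiring the most care, is the compactness of $M_1$: one must convert the \emph{pointwise} positivity of $\Ric$ into a \emph{uniform} positive lower bound to run the Bonnet--Myers-type argument adapted to shrinkers, and this is where the differential inequality $\Delta_f\lambda\le\lambda-\lambda^2$ together with the cutoff technique of the previous proposition (applied now to $-\lambda$ rather than $\lambda$) is expected to do the work, paralleling the Munteanu--Wang scheme but without assuming any a priori curvature bound.
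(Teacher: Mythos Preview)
Your splitting argument via the strong maximum principle of \cite{NNiu} and de~Rham is essentially the paper's approach, and your remark that a Ricci-flat factor with $B^\perp\ge 0$ is flat is the right fact (the paper also invokes \cite{NNiu} for this).

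The gap is in the compactness of $M_1$. Your proposed mechanism is not the one that works, and as written it does not go through. Saying that $\Ric>0$ forces $\H(f)<g$ is true but vacuous (it holds on any shrinker with $\Ric\ge 0$), and the suggestion of ``applying the cutoff technique to $-\lambda$'' to manufacture a uniform positive lower bound on $\lambda$ is not correct: the inequality $\Delta_f\lambda\le \lambda-\lambda^2$ does \emph{not} prevent $\lambda\to 0$ at infinity via any maximum-principle-with-cutoff argument. In fact, one should expect $\lambda$ to decay on a noncompact shrinker, so a uniform lower bound is the wrong target.

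What the paper (following \cite{MW} and \cite{CLY}) actually does is extract from $\Delta_f\lambda\le \lambda-\lambda^2$ the \emph{quantitative decay} estimate $\lambda\ge b/f$ for some $b\in(0,1]$. This is then fed into the Munteanu--Wang integration along the gradient flow of $f$: since $\Ric(\nabla f/|\nabla f|,\nabla f/|\nabla f|)\ge b/f$, one obtains $\Scal\ge b\log f$ at infinity. But on any shrinker the scalar curvature has bounded average on large balls, and this contradiction forces $M_1$ to be compact. The missing idea in your proposal is precisely this two-step scheme: (i) the $b/f$ lower bound on $\lambda$ (not a uniform bound), and (ii) the resulting logarithmic growth of $\Scal$, which is what actually clashes with noncompactness.
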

\begin{proof}
The first part follows from that $\Ric\ge 0$ and the strong maximum principle for $\Ric$ under the condition $B^\perp\ge0$. After the splitting, the non-Euclidean factor  $M_1$ must have $\Ric>0$. To show that $M_1$ is compact, we adapt the second part of Munteanu-Wang's argument to the K\"ahler setting. First of all  $\Delta_f \lambda \le \lambda -\lambda^2$ is all one needs to apply the result of Chow-Lu-Yang \cite{CLY} and obtain the lower estimate
$$
\Ric\ge \frac{b}{f}.
$$
Here the constant $0<b\le 1$. For this estimate one  can also use the argument in \cite{MW}.

Now we can repeat the argument of Munteanu-Wang \cite{MW} on pages 503-504, only observing that
$$
-\Ric\left(\frac{\nabla f}{|\nabla f|}, \frac{\nabla f}{|\nabla f|}\right) =-\Ric(E_1, \bar{E_1})\le -\frac{b}{f}
$$
where $E_1=\frac{1}{\sqrt{2}}\left(\frac{\nabla f}{|\nabla f|}-\sqrt{-1}J\left(\frac{\nabla f}{|\nabla f|}\right)\right)$.
This implies the lower estimate of scalar curvature $S\ge  b\log f(x)$, which then induces a contradiction due to the upper average estimate of the scalar curvature.
\end{proof}

A result of \cite{Wilking} (Section 4) asserts that the K\"ahler-Ricci flow evolves the $B^\perp\ge 0$ cone  into the cone of curvatures of nonnegative bisectional curvature on any compact K\"ahler manifold as $t\to$ the singular time. In particular, any compact ancient solution with $B^\perp\ge 0$ must have nonnegative bisectional curvature, which implies that $M_1$ admits nonnegative bisectional curvature since the shrinker is a singularity model.  Hence a complete classification with $B^{\perp} \geq 0$ can be obtained by appealing to an earlier result of the second author \cite{Ni-MRL}.

\begin{thm}\label{thm:22}
Let $(M^n,g,f)$ be a complete gradient shrinking K\"ahler-Ricci soliton with $B^{\perp} \geq 0$.
Then the universal cover $\tilde{M}$ of $M$ split isometrically-holomorphically as $N_1 \times N_2 \times \cdots \times N_l \times \mathbb{C}^k$, with each $N_i$ being a compact irreducible Hermitian symmetric space.
\end{thm}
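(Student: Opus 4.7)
The plan is to combine the Corollary established just above with a result of Wilking on the evolution of the $B^\perp \geq 0$ cone under Kähler-Ricci flow, and with an earlier classification of the second author in \cite{Ni-MRL}.

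First, I invoke the Corollary: after passing to a finite cover of the universal cover, $\widetilde{M} = M_1^{n-k} \times \mathbb{C}^k$, where $M_1$ is a compact gradient shrinking Kähler-Ricci soliton with $\Ric > 0$. Since the condition $B^\perp \geq 0$ is inherited by isometric Kähler factors, $M_1$ also satisfies $B^\perp \geq 0$. Thus the entire problem reduces to classifying the compact factor $M_1$.

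Second, I associate to $M_1$ its self-similar ancient solution $g_1(t)$ on $(-\infty, 0)$, which is compact for every $t$. Wilking's theorem (Section 4 of \cite{Wilking}) asserts that on any compact Kähler manifold, the Kähler-Ricci flow evolves the invariant cone $\{B^\perp \geq 0\}$ into the smaller invariant cone of curvature operators with nonnegative bisectional curvature as $t$ approaches the singular time. Since $g_1(t)$ is self-similar, the curvature operator at each time is isometric (up to scaling) to that at $t=-1$; hence the asymptotic cone it enters at the singular time must already contain the curvature operator at $t=-1$. Therefore $M_1$ has nonnegative bisectional curvature.

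Third, I apply the earlier classification \cite{Ni-MRL}: any complete Kähler-Ricci shrinker with nonnegative bisectional curvature has universal cover splitting isometrically and holomorphically as $\mathbb{C}^{k'} \times N_1 \times \cdots \times N_l$ with each $N_i$ a compact irreducible Hermitian symmetric space. Applied to $M_1$, which has $\Ric > 0$ and hence no $\mathbb{C}$ factor, this gives $M_1 = N_1 \times \cdots \times N_l$ of the desired form. Combined with the Euclidean factor from the Corollary, this yields the full splitting claimed in the theorem.

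The step I expect to require the most care is the second one, namely the passage from Wilking's asymptotic statement at the singular time of a compact Kähler-Ricci flow to the pointwise conclusion that the shrinker itself has nonnegative bisectional curvature at time $-1$. The justification rests on two facts: that the cone $\{B^\perp \geq 0\}$ is preserved by the flow (so the curvature operator stays in this cone for all $t$), and that self-similarity of the ancient solution forces the ODE-orbit description of the curvature evolution to be stationary modulo scaling, so the smaller invariant cone reached asymptotically must actually be reached at every time.
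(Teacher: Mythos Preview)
Your proposal is correct and follows essentially the same route as the paper: split via the Corollary, use Wilking's result that a compact ancient solution with $B^\perp\ge 0$ must have nonnegative bisectional curvature (your self-similarity argument is precisely how one extracts this for the shrinker), then invoke \cite{Ni-MRL}. The paper also notes an alternative for the second step: when a Euclidean factor is present, the observation in \cite{Wilking} that $M_1\times\mathbb{C}$ having $B^\perp\ge 0$ forces $M_1$ to have nonnegative bisectional curvature gives the conclusion directly, and when there is no Euclidean factor $\tilde M$ is already compact so \cite{GuZhang, Wilking} apply.
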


The result on the compact factors being Hermitian symmetric spaces can also be seen via the fact that if a $\mathbb{C}$ factor exists, then by the observation of \cite{Wilking} the compact factor has nonnegative bisectional curvature. Otherwise, $\tilde{M}$ itself is compact, thus having nonnegative bisectional curvature by \cite{GuZhang} as well as \cite{Wilking}.

\section{Weakly PIC$\mbox{}_1$ Shrinkers}
Here we prove a similar result for gradient shrinking Ricci solitons with weakly  PIC$\mbox{}_1$.  Recall that a shrinker is a triple $(M^n, g, f)$ satisfying that $R_{ij}+f_{ij}-\frac{1}{2}f_{ij}=0$. We say that $(M^n,g)$ has PIC$\mbox{}_1$ if for any $p \in M$, for any orthonormal four-frame $\{e_1, e_2,e_3,e_4\}$ in $T_pM$ and any $\lambda \in [0,1]$,
\begin{equation}\label{eq:31} R_{1313}+\lambda^2 R_{1414}+R_{2323}+\lambda^2 R_{2424} -2\lambda R_{1234} >0.\end{equation}
Here $n=\dim_{\mathbb{R}}(M)$, unlike in the previous section where $n=\dim_{\mathbb{C}}(M)$ of a complex manifold.
We say $(M,g)$ has weakly PIC$\mbox{}_1$ if $``>0"$ in (\ref{eq:31}) is replaced by $``\ge 0"$. We say that $R$ has PIC if (\ref{eq:31}) holds only for $\lambda=1$. The PIC condition was first proven to be invariant under the Ricci flow in \cite{Ng} and \cite{BS}. The weakly PIC$\mbox{}_1$ condition was first introduced by Brendle-Schoen \cite{BS}.
 It was proved later in \cite{Brendle} that the Ricci flow evolves a compact manifold with  a PIC$\mbox{}_1$  metric into a round spherical metric. Hence any compact shrinker with PIC$\mbox{}_1$ metric must be the round sphere or its quotient. Our focus is to classify all the shrinkers with weakly PIC$\mbox{}_1$ without any curvature bound assumption. Since  (\ref{eq:31}) or weakly PIC$\mbox{}_1$  does not imply the nonnegativity of sectional curvature, the Munteanu-Wang result \cite{MW} can not be applied  directly. Because of that the shrinkers with weakly PIC in dimension four has been classified by works of  \cite{NW2, LNW}, we assume  $n\ge 5$ for this discussion.

\begin{lemma}\label{lem:31}
Let $R$ be an algebraic curvature operator.    \\
(1) If $R$ is of  weakly PIC$\mbox{}_1$, then for all orthonormal three-frame $\{e_1, e_2,e_3\}$, we have
\begin{equation}\label{eq:wpic}R_{1313}+R_{2323} \geq 0.\end{equation}
In particular, $R$ has nonnegative Ricci curvature. If (\ref{eq:wpic}) holds $``>"$, it implies $\Ric>0$.\\
(2) Assume $n= 5$ and $R$ is weakly PIC$\mbox{}_1$, or $n\ge 6$ and $R$ is weakly PIC, then
\begin{equation}\label{eq:32}\Scal-2R_{nn}-2R_{11}= -2R_{1n1n} +\sum_{k,l=2}^{n-1} R_{klkl} \geq -2R_{1n1n}.\end{equation}
\end{lemma}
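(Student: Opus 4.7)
Part (1) is essentially a direct unpacking of the definition. Given an orthonormal three-frame $\{e_1,e_2,e_3\}$, I would extend it to an orthonormal four-frame $\{e_1,e_2,e_3,e_4\}$ (possible since $n\ge 4$) and apply (\ref{eq:31}) at $\lambda=0$; the $\lambda^2$ and $\lambda R_{1234}$ terms vanish, leaving precisely (\ref{eq:wpic}). For nonnegativity of the Ricci tensor, I would fix a unit vector $v$, complete it to an orthonormal basis $\{v,e_2,\ldots,e_n\}$, and apply (\ref{eq:wpic}) to each three-frame $\{e_j,e_k,v\}$ with $2\le j<k\le n$. Summing the resulting $\binom{n-1}{2}$ inequalities, each sectional curvature $R(v,e_l,v,e_l)$ appears in exactly $n-2$ of them, yielding $(n-2)\,\Ric(v,v)\ge 0$. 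The strict version propagates through the same sum.

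For part (2), I would first verify the stated algebraic identity by expanding $\Scal=\sum_i R_{ii}$ with $R_{ii}=\sum_{j\ne i}R_{ijij}$ and separating contributions from the indices $\{1,n\}$ versus the middle block $\{2,\ldots,n-1\}$; the cross terms $R_{1k1k}$ and $R_{nknk}$ with $k$ in the middle cancel between $\Scal$ and $2R_{11}+2R_{nn}$, leaving exactly $-2R_{1n1n}+\sum_{k,l=2}^{n-1}R_{klkl}$. The lemma then reduces to showing $\sum_{k,l=2}^{n-1}R_{klkl}\ge 0$ under the stated hypotheses.

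In the case $n=5$ under weakly PIC$_1$, part (1) applied to the three-frame $\{e_2,e_3,e_4\}$ with each of its vectors in turn playing the role of the ``common'' vector (the $e_3$-slot in (\ref{eq:wpic})) gives the three inequalities $R_{2424}+R_{3434}\ge 0$, $R_{2323}+R_{3434}\ge 0$, and $R_{2323}+R_{2424}\ge 0$; adding them yields $2(R_{2323}+R_{2424}+R_{3434})\ge 0$, which is exactly $\sum_{k,l=2}^{4}R_{klkl}\ge 0$. In the case $n\ge 6$ under weakly PIC, I would first symmetrize away the $R_{1234}$ term by applying (\ref{eq:31}) at $\lambda=1$ to both $\{e_1,e_2,e_3,e_4\}$ and $\{e_1,e_2,e_3,-e_4\}$ and averaging, obtaining
$$R_{acac}+R_{adad}+R_{bcbc}+R_{bdbd}\ge 0$$
for every partition $\{a,b\}\,|\,\{c,d\}$ of an orthonormal four-frame. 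Summing this over the three pair-partitions of a fixed four-subset $\{a,b,c,d\}$ produces $\sum_{i<j\in\{a,b,c,d\}}R_{ijij}\ge 0$. Finally, summing over all four-subsets of $\{2,\ldots,n-1\}$ and using that each pair $\{i,j\}$ lies in $\binom{n-4}{2}\ge 1$ such subsets (this is exactly where $n\ge 6$ enters) delivers $\sum_{2\le k<l\le n-1}R_{klkl}\ge 0$, completing the proof.

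The only substantive point is the double combinatorial averaging in the last case---first over pair-partitions of a fixed four-subset to turn the PIC inequality into a fully symmetric sum on that subset, then over four-subsets of the middle block to reach the required $\binom{n-2}{2}$-term sum. Everything else is either algebraic bookkeeping or a direct application of the relevant definition.
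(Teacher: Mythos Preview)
Your proof is correct and follows the same skeleton as the paper's: set $\lambda=0$ for part (1), and for part (2) reduce to showing $\sum_{k,l=2}^{n-1}R_{klkl}\ge 0$. The paper's proof of part (2) is the one-line remark ``by (1), $\sum_{k,l=2}^{n-1}R_{klkl}\ge 0$,'' which literally applies only to the $n=5$ weakly PIC$_1$ case (since part (1) is stated under PIC$_1$); your argument is more careful in that it handles the $n\ge 6$ weakly PIC case separately via the symmetrized four-frame inequality and a double combinatorial average, which is the honest way to justify that step when only PIC is assumed.
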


\begin{proof}
For part (1), choose $\l=0$ in \eqref{eq:31}.  For part (2), observe that by (1),
$\sum_{k,l=2}^{n-1} R_{klkl}\geq 0.$ \end{proof}

\begin{thm}\label{thm:31}
Let $(M^n, g, f)$ be a complete gradient shrinking Ricci  soliton.\\
(i) Suppose that $M$ has $\Ric>0$ and weakly PIC$\mbox{}_1$, or slightly weak condition (\ref{eq:wpic}). Then $M$ must be compact.  In particular, any shrinker with PIC$\mbox{}_1$ must be compact, hence isometric to $\mathbb{S}^n$ or its quotient.  \\
(ii) Suppose $M$ has weakly PIC$\mbox{}_1$ (or has 2-nonnegative curvature operator). Then the universal cover $\tilde{M}$ splits isometrically as $N_1\times N_2 \times \mathbb{R}^k$, where $N_1$ is a product of irreducible compact Hermitian symmetric spaces and $N_2$ being the product of irreducible compact Riemannian symmetric spaces. \end{thm}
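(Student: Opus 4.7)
The plan is to mirror the K\"ahler strategy of Section 2 in the Riemannian setting. The essential adjustment is that under weakly PIC$_1$ the individual sectional curvatures $R_{1k1k}$ may be negative, so the naive analogue of Proposition \ref{prop:11} applied to the smallest Ricci eigenvalue fails. My replacement is to work with $\mu(x) := \lambda_1(x) + \lambda_2(x)$, the sum of the two smallest Ricci eigenvalues, which is compatible with the pairwise control $R_{1k1k}+R_{2k2k} \ge 0$ furnished by Lemma \ref{lem:31}(1). The first, and main, analytic goal for Part (i) is therefore the barrier inequality
\begin{equation*}
\Delta_f \mu \le \mu - \mu^2.
\end{equation*}
I would derive this by choosing an orthonormal frame diagonalizing $\Ric$ at $p$ with $\lambda_1 \le \cdots \le \lambda_n$, applying Hamilton's shrinker identity $\Delta_f R_{ij} = R_{ij} - 2R_{ikjl}R_{kl}$ to $R_{11}+R_{22}$, isolating the $k \in \{1,2\}$ contribution (which simplifies to $R_{1212}\,\mu$), and bounding the tail $k \ge 3$ using $R_{1k1k}+R_{2k2k} \ge 0$ together with $\lambda_k \ge \mu/2$. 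A cancellation via $\sum_{k\ge 3}(R_{1k1k}+R_{2k2k}) = \mu - 2R_{1212}$ removes the unknown-sign quantity $R_{1212}$ and leaves $\sum_k(R_{1k1k}+R_{2k2k})\lambda_k \ge \mu^2/2$, yielding the desired inequality.

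With this in hand I would apply the cut-off argument of Proposition 2.2 (cutoff $\psi = \eta(f/R)$, soliton identity $\Delta_f f = n/2 - f$, and the Cao-Zhu estimate \eqref{eq:22}) to conclude $\mu \ge 0$ globally; under the extra hypothesis $\Ric>0$ this upgrades to $\mu > 0$, and a Chow-Lu-Yang type lemma then delivers $\mu \ge b/f$ on the ends. A suitable adaptation of the Munteanu-Wang integral-curve argument along $\nabla f/|\nabla f|$---routed through the pairwise bound $\mu \ge b/f$, the soliton identity $\nabla \Scal = 2\Ric(\nabla f)$, and Lemma \ref{lem:31}---then produces a growth estimate for $\Scal$ that contradicts the standard upper-average bound, forcing $M$ compact. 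Brendle's sphere theorem for compact PIC$_1$ manifolds then completes the last sentence of (i). For Part (ii) the barrier inequality gives $\mu \ge 0$ unconditionally, so $\Ric$ is $2$-nonnegative; Hamilton's strong maximum principle (using invariance of the weakly PIC$_1$ cone under the Ricci flow) then makes the kernel of $\Ric$ parallel, so by de Rham $\tilde M = M' \times \mathbb{R}^k$ with $M'$ carrying no flat factor. Part (i) forces $M'$ compact, and a holonomy / invariant-cone analysis in the spirit of \cite{Wilking} splits $M'$ into irreducible compact Hermitian symmetric factors $N_1$ and irreducible compact Riemannian symmetric factors $N_2$.

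The principal obstacle is the algebraic step producing $\Delta_f \mu \le \mu - \mu^2$: it hinges on the cancellation of the two unknown-sign $R_{1212}\mu$ contributions, without which the quadratic bound collapses. A secondary delicate point is that weakly PIC$_1$ does not yield a pointwise lower bound on $\Ric(\nabla f, \nabla f)/|\nabla f|^2$ from $\mu \ge b/f$ in isolation, so the compactness contradiction in Part (i) has to be driven through the two-eigenvalue bound and the soliton identities rather than through a raw minimum-Ricci-eigenvalue estimate as in \cite{MW}.
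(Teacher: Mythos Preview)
Your core algebraic step---the barrier inequality $\Delta_f \mu \le \mu - \mu^2$ for $\mu = \lambda_1 + \lambda_2$ via the cancellation of the $R_{1212}\mu$ terms---is correct. But this is \emph{not} the route the paper takes for Theorem~\ref{thm:31}. The paper works directly with the smallest eigenvalue $\lambda = \lambda_1$, not with $\mu$. Your premise that ``the naive analogue of Proposition~\ref{prop:11} applied to the smallest Ricci eigenvalue fails'' is mistaken: the paper shows it succeeds, just with the weaker inequality $\Delta_f \lambda \le \lambda$ (no quadratic term). Since weakly PIC$_1$ already gives $\Ric \ge 0$ by Lemma~\ref{lem:31}(1), the missing $-\lambda^2$ is irrelevant; one only needs $\sum_{k \ge 2} R_{1k1k}\lambda_k \ge 0$. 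The paper proves this by a three-case analysis on the sign pattern of the $R_{1k1k}$: if all are nonnegative the claim is trivial; if some $R_{1k1k} < 0$ with $k < n$, pair it with $R_{1,k+1,1,k+1}$ using $R_{1j1j} + R_{1k1k} \ge 0$; if $R_{1n1n} < 0$, invoke Lemma~\ref{lem:31}(2), i.e.\ $\Scal - 2R_{11} - 2R_{nn} \ge -2R_{1n1n}$, to close the estimate. With $\Delta_f \lambda \le \lambda$ in hand and $\Ric > 0$, the Munteanu--Wang argument applies verbatim: Chow--Lu--Yang gives $\lambda_1 \ge b/f$, hence $\Ric(\nabla f/|\nabla f|, \nabla f/|\nabla f|) \ge b/f$ directly.

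This is exactly where your approach runs into trouble, and you correctly flag it: a bound $\mu \ge b/f$ on the \emph{sum} of the two smallest eigenvalues does not control $\Ric(\nabla f, \nabla f)/|\nabla f|^2$, so the integral-curve step does not go through without further input. The paper's choice to stay with $\lambda_1$ sidesteps this entirely. Interestingly, your $\mu$-based computation \emph{is} what the paper uses---but in Section~4, for the weaker weakly PIC hypothesis (Proposition~\ref{prop:42} and Lemma~\ref{lem:41}), where $\Ric \ge 0$ is not available a priori and one really is forced to work with $\lambda_1 + \lambda_2$. There the conclusion is only $2$-nonnegativity of $\Ric$, and no compactness is claimed. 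For Part~(ii), the paper also differs slightly: rather than invoking invariance of the PIC$_1$ cone, it verifies directly that $\sum_{k > j} R_{1k1k}\lambda_k \ge 0$ whenever $\lambda_1 = \cdots = \lambda_j = 0$ (the same case analysis), and then applies the tensor strong maximum principle of \cite{Ni-MRL04} to make $\ker \Ric$ parallel.
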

\begin{proof}
Let $\lambda(x)$ be the minimum eigenvalue of the Ricci tensor at $x$. By Lemma \ref{lem:31}, weakly PIC$\mbox{}_1$ implies $\lambda \geq  0$ on $M$. The key is to show that $\lambda$ satisfies
$$\Delta_f \lambda \leq \lambda.$$ Once this holds we can proceed as in Munteanu-Wang \cite{MW} to conclude the compactness for part (i) if $\Ric$ is assumed to be positive. For PIC$\mbox{}_1$ shrinker, the first part implies that it must be compact. Then Brendle's result implies that it must be  spherical.

Choose orthonormal frame $\{e_1, \cdots, e_n\}$ at $p$ such that $\Ric(e_i,e_j)=\lambda_i \delta_{ij}$ with $\lambda_1 \leq \lambda_2 \leq \cdots \leq \lambda_n$.
Using $\Delta_f R_{ij} =R_{ij} -2R_{ikjl}R_{kl}$, we obtain
\begin{eqnarray*}
\Delta_f R_{11} \leq R_{11} -2\sum_{k=2}^n R_{1k1k}R_{kk}.
\end{eqnarray*}
We  show below that $\sum_{k=2}^n R_{1k1k}R_{kk}\geq 0$ under weakly PIC$\mbox{}_1$  condition. \\
Case 1: $R_{1k1k} \geq 0$ for all $2\leq k \leq n$. There is nothing to prove. \\
Case 2: $R_{1k1k} <0$ for some $2\leq k \leq n-1$.
Since for  $j\neq k$, we have $R_{1j1j}+R_{1k1k} \geq 0$ by Lemma \ref{lem:31}, for  $j\ne k$, $R_{1j1j}\ge 0$ .   Let $m=k+1$,
$$\sum_{j=2}^n R_{1j1j}R_{jj} \geq  R_{1k1k}R_{kk}+R_{1m1m}R_{mm} \geq (R_{1k1k}+R_{1m1m})R_{mm} \geq 0.$$
Case 3: $R_{1n1n}<0$. The weakly PIC$\mbox{}_1$ condition implies $R_{1j1j}+R_{1n1n}\geq 0$ for all $2\leq j \leq n-1$. We can estimate, using part (2) of Lemma \ref{lem:31},
\begin{eqnarray*}
\sum_{k=2}^n R_{1k1k}R_{kk} &=&
\sum_{k=2}^{n-1} R_{1k1k}R_{kk} + R_{1n1n}R_{nn} \\
&\geq&  -\sum_{k=2}^{n-1} R_{1n1n}R_{kk}+R_{1n1n}R_{nn}
= -R_{1n1n} \left(\sum_{k=2}^{n-1}R_{kk} -R_{nn} \right)  \\
&=& -R_{1n1n} \left(\Scal-R_{11} -2R_{nn} \right) \\
&\geq &  -R_{1n1n} \left(R_{11} -R_{1n1n} \right)\geq 0
\end{eqnarray*}
Thus we have proved that  $\sum_{k=2}^n R_{1k1k}R_{kk}\geq 0$ and
$\Delta_f \lambda \leq \lambda$.

For part (ii), we can apply the splitting result in \cite{Ni-MRL04}, which in turn models the argument in \cite{NT}.  Precisely the strong maximum principle, namely  Theorem 2.2 of \cite{Ni-MRL04} can be applied under the
condition
\begin{equation}\label{eq:33}
\sum_{k=j+1}^nR_{1k1k}R_{kk}\ge 0
\end{equation}
if $R_{11}=R_{22}=\cdots=R_{jj}=0$. One can refer pages 483-484 of \cite{NT} for details of the proof that the distribution associated with the kernel of $\Ric$ is invariant under the parallel transport.
It then follows  that the universal cover $\tilde{M}$ splits as $M_1\times \mathbb{R}^k$. The Euclidean factor is obtained from the fact that the kernel of $\Ric$ is invariant under parallel transport and De Rham's theorem.  The factor $M_1$ must have positive Ricci, hence must be compact by the part (i).
The above proof of part (i) can be adapted to show (\ref{eq:33}) verbatim.

It is not hard to see that the  two-nonnegativity of the curvature operator implies (\ref{eq:wpic}). Hence the proof applies to that case as well.
\end{proof}
\begin{remark}
The above argument works under the condition: for all orthonormal three-frame $\{e_1,e_2,e_3\}$,  $$R_{1212}+R_{2323}\geq 0.$$
This condition is slightly weaker than PIC$\mbox{}_1$.  It was shown in \cite{BCW} that weakly PIC$\mbox{}_1$ ancient solution with bounded curvature must have nonnegative complex sectional curvature. Hence if the curvature is assume to be bounded, Theorem \ref{thm:31} is a consequence of Munteanu-Wang's result. Our result has the advantage that it applies to a weaker condition (\ref{eq:wpic}), and does not assume any curvature bound.
\end{remark}

\section{Weakly PIC Shrinkers}
Here we show a partial result towards understanding the shrinkers with weakly PIC. We assume that $\dim\ge 5$ since  the four-dimensional shrinkers with weakly PIC have been understood \cite{LNW}.
Algebraically, weakly PIC condition immediately implies that the  Ricci curvature is 4-nonnegative. If $n \geq 7$, weakly PIC implies Ricci is 3-nonnegative (see for example \cite[pages 10-11]{BCW}). By adapting arguments of the pervious two sections we show here that for shrinkers with weakly PIC, the Ricci curvature is in fact $2$-nonnegative.

\begin{prop}
Let $(M^n ,g, f)$ be a complete gradient shrinking Ricci soliton with weakly PIC. Then the Ricci curvature is 2-nonnegative.
\end{prop}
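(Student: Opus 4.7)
The plan is to adapt the strategy of Section~2 to the function $\mu(x) := \lambda_1(x) + \lambda_2(x)$, where $\lambda_1 \le \cdots \le \lambda_n$ are the Ricci eigenvalues, since Ricci being $2$-nonnegative is equivalent to $\mu \ge 0$ on $M$. The three main steps are: construct a smooth upper barrier for $\mu$ and compute its $\Delta_f$; prove an algebraic inequality on the resulting curvature sum using weakly PIC; and run the cutoff maximum-principle argument of Proposition~\ref{prop:11}.

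At any $p \in M$, diagonalize $\Ric$ with $\lambda_1 \le \cdots \le \lambda_n$ and radially parallel-transport $e_1, e_2$ to obtain vector fields $v_1, v_2$. Since $\mu$ is the infimum of $\Ric(w_1,w_1)+\Ric(w_2,w_2)$ over orthonormal $2$-frames, $\tilde\mu(x) := \Ric(v_1,v_1)+\Ric(v_2,v_2)$ is a smooth upper barrier for $\mu$ at $p$. Using the Lichnerowicz-type identity $\Delta_f R_{ij} = R_{ij} - 2R_{ikjl}R_{kl}$ on the shrinker,
\[
\Delta_f \tilde\mu\big|_p = \mu - 2\sum_{k=1}^n (R_{1k1k}+R_{2k2k})\lambda_k = \mu - 2\Bigl(R_{1212}\mu + \sum_{k=3}^n A_k\lambda_k\Bigr),
\]
with $A_k := R_{1k1k}+R_{2k2k}$ and the identity $\sum_{k\ge 3}A_k = \mu - 2R_{1212}$ coming from $R_{11}+R_{22}=\mu$. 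The key algebraic consequences of weakly PIC are: (i) averaging PIC for $\{e_1,e_2,e_k,e_l\}$ with its $e_l \mapsto -e_l$ counterpart yields $A_k+A_l \ge 0$ for distinct $k, l \ge 3$, and summing over all such pairs gives $\sum_{k\ge 3}A_k \ge 0$, i.e., $R_{1212} \le \mu/2$, producing the quadratic term $2R_{1212}\mu \ge \mu^2$ when $\mu<0$; (ii) Lemma~\ref{lem:31}(2) extends to give $2R_{ijij}\ge 2\lambda_i + 2\lambda_j - \Scal$ for any $i\ne j$ under weakly PIC with $n \ge 6$, providing lower bounds on the individual $A_k$. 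A case analysis on the sign of $R_{1212}$ and on the unique possibly negative $A_{k_0}$ (where $|A_{k_0}|\le \min_{k\ne k_0, k\ge 3}A_k$ from the pairwise bound) together with the ordering $\lambda_3 \le \cdots \le \lambda_n$ should then yield $R_{1212}\mu+\sum_{k\ge 3}A_k\lambda_k \ge c\mu^2$ on $\{\mu<0\}$ for some $c>0$, translating to $\Delta_f\mu \le \mu - c\mu^2$ in the viscosity/barrier sense.

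With this differential inequality in hand, the argument of Proposition~\ref{prop:11} applies: suppose $\sup_K \mu \le -a < 0$ on a compact set $K$, set $\psi = \eta(f/R)$ and $Q = \psi\mu$, and apply $\Delta_f Q \ge 0$ at a minimum $x_0$ of $Q$. The Cao--Zhu estimates on $f, |\nabla f|^2$ and $\Delta_f f = n/2 - f$ reduce the resulting bound to $0 \le Q\bigl(-C'/R+\psi - Q\bigr)$, forcing $Q \ge -C'/R \to 0$ and contradicting $\min Q \le -a$. The main obstacle is the algebraic step: unlike weakly PIC$_1$, which by Lemma~\ref{lem:31}(1) forces every $A_k \ge 0$, weakly PIC yields only the pairwise bound $A_k+A_l\ge 0$, so a single $A_{k_0}$ may be negative. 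Controlling the worst case, in which $A_{k_0}<0$ occurs at $k_0 = n$ so that $\lambda_{k_0}=\lambda_n$ amplifies the negative contribution, is precisely where Lemma~\ref{lem:31}(2) is essential; the low-dimensional case $n = 5$, where Lemma~\ref{lem:31}(2) is not directly available under weakly PIC alone, will likely require a separate direct computation exploiting that only the three indices $k = 3,4,5$ are present.
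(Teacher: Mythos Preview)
Your overall strategy---work with $\mu=\lambda_1+\lambda_2$, derive a differential inequality of the form $\Delta_f\mu\le\mu-c\mu^2$ via a barrier, then run the cutoff maximum principle of Section~2---is exactly the paper's approach (Proposition~4.2 combined with Lemma~4.1). The cutoff step at the end is identical.

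The gap is in your organization of the curvature term. You write the identity $\sum_{k\ge 3}A_k=\mu-2R_{1212}$ but then try to bound $2R_{1212}\mu$ and $2\sum_{k\ge 3}A_k\lambda_k$ separately; the second sum is awkward because the $\lambda_k$ (for $k\ge 3$) can be negative, and your case analysis (``should then yield'') never closes. The paper instead \emph{substitutes} that identity to eliminate $R_{1212}$ altogether:
\[
\mu-2R_{1212}\mu-2\sum_{k\ge 3}A_k\lambda_k
=\mu-\mu^2-\sum_{k\ge 3}A_k\bigl(2\lambda_k-\mu\bigr),
\]
and now the weights $2\lambda_k-\mu\ge 0$ are nonnegative for $k\ge 3$. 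Lemma~4.1 then shows $\sum_{k\ge 3}A_k(2\lambda_k-\mu)\ge 0$ when $\mu\le 0$ by a short case analysis on the (at most one) index $p$ with $A_p<0$: for $p<n$ pair $A_p$ with $A_{p+1}$, and for $p=n$ use only that $\Scal-2\lambda_n=\sum_{k,l=1}^{n-1}R_{klkl}\ge 0$, which holds for all $n\ge 5$ under weakly PIC. This gives $c=1$ and is uniform in dimension; your appeal to Lemma~\ref{lem:31}(2) needs $n\ge 6$ and forces an unnecessary separate $n=5$ case. Once you make the substitution, your sketch collapses to the paper's proof.
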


\begin{proof}
Apply the cut-off argument of Section 2  to the function $\lambda(x)$ in Proposition \ref{prop:42}. Then we can conclude that $M$ has 2-nonnegative Ricci curvature.
\end{proof}
\begin{remark}
The proof remains valid if one replaces weakly PIC by the weaker condition that for all orthonormal four-frame $\{e_1, e_2, e_3, e_4\}$,
\begin{equation}\label{eq:40}R_{1313}+R_{1414}+R_{2323}+R_{2424} \geq 0.
\end{equation}
The result above can also be extended to ancient solutions with weakly PIC. There is a strong maximum principle associated with the proposition below.\end{remark}

\begin{prop} \label{prop:42}
Let $(M^n,g,f)$ be a complete gradient shrinking Ricci soliton with weakly PIC. Denote by $\lambda_1(x) \leq \lambda_2(x) \leq \cdots \leq \lambda_n(x)$ the eigenvalues of the Ricci tensor at $x$.
Then the function $\lambda(x)=\min\{\lambda_1(x) +\lambda_2(x), 0\}$ satisfies
\begin{equation}\label{eq:41}
    \Delta_f \lambda  \leq \lambda -\lambda^2,
\end{equation}
in the barrier or viscosity sense.
\end{prop}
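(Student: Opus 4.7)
The plan is to follow the barrier-function strategy of Proposition \ref{prop:11} and Theorem \ref{thm:31}, now applied to the two-eigenvalue sum $\sigma(x) := \lambda_1(x) + \lambda_2(x)$. The case $\lambda(x_0) = 0$ is immediate (the constant $\phi \equiv 0$ is an upper barrier with $\Delta_f\phi = 0 = \lambda - \lambda^2$), so assume $\sigma(x_0) < 0$. Fix an orthonormal frame $\{e_1, \dots, e_n\}$ at $x_0$ diagonalizing $\Ric$ with $\lambda_1 \le \cdots \le \lambda_n$, parallel-transport $e_1, e_2$ radially to local vector fields $v_1, v_2$, and set
\[
\phi(x) := \Ric(v_1, v_1)(x) + \Ric(v_2, v_2)(x).
\]
Because $v_1, v_2$ remain orthonormal and $\sigma(x)$ is the infimum of such two-traces over orthonormal pairs (capped below by $0$ in the definition of $\lambda$), $\phi \ge \lambda$ near $x_0$ with equality at $x_0$, so $\phi$ is a valid upper barrier for $\lambda$.

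Applying the soliton Lichnerowicz identity $\Delta_f R_{ij} = R_{ij} - 2 R_{ikjl} R_{kl}$ together with the parallelism of $v_1, v_2$ at $x_0$, I compute
\[
\Delta_f \phi(x_0) = \sigma - 2 \sum_{k=1}^n a_k \lambda_k, \qquad a_k := R_{1k1k} + R_{2k2k},
\]
so the target $\Delta_f \phi(x_0) \le \sigma - \sigma^2$ reduces to the purely algebraic inequality
\begin{equation}\label{eq:plan-key}
\sum_{k=1}^n a_k \lambda_k \ge \tfrac{1}{2} \sigma^2.
\end{equation}

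Two structural inputs extracted from weakly PIC drive the proof of \eqref{eq:plan-key}. First, applying PIC to the $4$-frame $\{e_1, e_2, e_k, e_l\}$ and using the $e_l \mapsto -e_l$ symmetrization of the Berger term gives $a_k + a_l \ge 0$ for distinct $k, l \ge 3$; summing pairwise yields $\sum_{k \ge 3} a_k \ge 0$, i.e., $R_{1212} \le \sigma/2 < 0$, and forces at most one $a_j$ with $j \ge 3$ to be negative. Second, weakly PIC implies $\Ric$ is $4$-nonnegative (as recalled in the paper), so under $\sigma < 0$ one has $\lambda_k > 0$ for every $k \ge 4$. The proof of \eqref{eq:plan-key} then splits as in Theorem \ref{thm:31}. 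When $a_k \ge 0$ for all $k \ge 3$, the bound $\sum_{k\ge 3} a_k \lambda_k \ge \lambda_2(\sigma - 2R_{1212})$ (from $\lambda_k \ge \lambda_2$) combines with the identity
\[
R_{1212}\sigma + \lambda_2(\sigma - 2R_{1212}) - \tfrac{1}{2}\sigma^2 = (\lambda_1 - \lambda_2)\bigl(R_{1212} - \tfrac{\sigma}{2}\bigr),
\]
whose right-hand side is a product of two non-positive factors. When exactly one $a_j < 0$ occurs for some $j \ge 3$, the pairing
\[
a_j\lambda_j + a_m\lambda_m = a_j(\lambda_j - \lambda_m) + (a_j + a_m)\lambda_m
\]
with $m > j$ (so $\lambda_m > 0$) gives a nonnegative contribution; iterating to handle all remaining terms yields $\sum_{k \ge 3} a_k \lambda_k \ge 0$, which combined with $R_{1212}\sigma \ge \sigma^2/2$ (a consequence of $R_{1212} \le \sigma/2 < 0$) again delivers \eqref{eq:plan-key}.

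The principal obstacle is the algebraic bookkeeping in this latter case, particularly the subcase where $a_j < 0$ has index $j \ge 4$ and simultaneously $\lambda_3 < 0$: a single pairing leaves $a_3 \lambda_3$ potentially negative, so one must iterate the pairing scheme and redistribute negative contributions across partners of provably nonnegative $\lambda$-weight, leveraging both the pairwise positivity $a_k + a_l \ge 0$ and the $4$-nonnegativity of $\Ric$. Once \eqref{eq:plan-key} is established, the barrier-sense inequality $\Delta_f \lambda \le \lambda - \lambda^2$ holds at every point, feeding directly into the cutoff argument of Section 2, which crucially depends on the quadratic term $-\sigma^2$ to close.
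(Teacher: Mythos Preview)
Your barrier setup and the reduction to the algebraic target \eqref{eq:plan-key} are correct and coincide with the paper's approach; in fact your inequality is equivalent to the paper's Lemma~\ref{lem:41} via the identity
\[
2\sum_{k=1}^{n} a_k\lambda_k \;-\;\sigma^2 \;=\; \sum_{k\ge 3} a_k\,(2\lambda_k-\sigma).
\]
Case~A is fine.

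The gap is in Case~B. Your intermediate claim $\sum_{k\ge 3} a_k\lambda_k \ge 0$ is \emph{false} under the hypotheses you use (pairwise $a_k+a_l\ge 0$, Ricci $4$-nonnegative, $\sigma<0$), so no pairing iteration will rescue it. For instance with $n=6$, $\sigma=-0.2$, $\lambda_3=-0.1$, $\lambda_4=0.5$, $\lambda_5=0.8$, $\lambda_6=1$, and $a_3=100$, $a_4=-1$, $a_5=a_6=1$, all your constraints hold yet $\sum_{k\ge 3} a_k\lambda_k=-8.7<0$. The trouble is exactly the one you flag: a large positive $a_3$ against a negative $\lambda_3$ cannot be ``redistributed'' to any partner. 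You also never treat the case $j=n$, where there is no $m>j$ to pair with.

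The paper avoids both issues by working with the shifted weights $2\lambda_k-\sigma=(\lambda_k-\lambda_1)+(\lambda_k-\lambda_2)$, which are automatically nonnegative for every $k\ge 3$; this kills the $\lambda_3<0$ obstruction outright. In that formulation Case~A is a one-liner, the $p<n$ subcase is your pairing with $m=p+1$ (now both factors $a_p+a_m$ and $2\lambda_p-\sigma$ are $\ge 0$), and the $p=n$ subcase is closed using the additional weakly PIC consequence $\Scal-2\lambda_n\ge 0$ (valid for $n\ge 5$), which you never invoke. Replacing your intermediate target by $\sum_{k\ge 3} a_k(2\lambda_k-\sigma)\ge 0$ and adding the $p=n$ argument completes your proof.
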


\begin{proof}
Recall that on a shrinker, $\Delta_f R_{ij}=R_{ij}-2R_{ikjl}R_{kl}$. Choose orthonormal frame $\{e_i\}_{i=1}^n$ at $p$ such that $\Ric(e_i,e_j)=\lambda_i\delta_{ij}$ and $\lambda_1 \leq \lambda_2 \leq \cdots \leq \lambda_n$.
The result follows from the estimate below:
\begin{eqnarray*}
\Delta_f (R_{11}+R_{22})
&\leq&  R_{11}+R_{22}
-2 \sum_{k=1}^n ( R_{1k1k}+R_{2k2k} )R_{kk} \\
&=& R_{11}+R_{22} -2R_{1212}(R_{11}+R_{22})
- 2\sum_{k=3}^n ( R_{1k1k}+R_{2k2k} ) R_{kk} \\
&=& R_{11}+R_{22} -(R_{11}+R_{22})^2
- \sum_{k=3}^n ( R_{1k1k}+R_{2k2k} )  (2R_{kk}-R_{11}-R_{22} ) \\
&\leq & R_{11}+R_{22} -(R_{11}+R_{22})^2
\end{eqnarray*}
where we have used $2R_{1212}=R_{11}+R_{22}-\sum_{k=3}^n (R_{1k1k}+R_{2k2k})$ and Lemma \ref{lem:41} to be proved next.
\end{proof}

\begin{lemma}\label{lem:41}
Let $R$ be an algebraic curvature operator with weakly PIC. Let $\{e_1, e_2, \cdots, e_n \}$ be an orthonormal frame such that $\Ric(R)$ is diagonal with $R_{11} \leq R_{22} \leq \cdots \leq R_{nn}$ being the eigenvalues of $\Ric(R)$. Suppose $R_{11}+R_{22} \leq 0$.
Then $$\sum_{k=3}^n ( R_{1k1k}+R_{2k2k} )  (2R_{kk}-R_{11}-R_{22} ) \geq 0.$$
\end{lemma}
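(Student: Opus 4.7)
The plan is to combine a structural consequence of weakly PIC on the mixed sectional curvatures $a_k:=R_{1k1k}+R_{2k2k}$ with a case analysis on which, if any, $a_k$ is negative. First note that $S_k:=2R_{kk}-R_{11}-R_{22}\ge 0$ for every $k\ge 3$, since the ordering of the Ricci eigenvalues gives $R_{kk}\ge R_{22}\ge \tfrac{1}{2}(R_{11}+R_{22})$. Applying weakly PIC to the two four-frames $(e_1,e_2,e_k,e_l)$ and $(e_2,e_1,e_k,e_l)$ and adding the resulting inequalities yields $a_k+a_l\ge 2|R_{12kl}|\ge 0$ for any distinct $k,l\in\{3,\dots,n\}$, so at most one of the $a_k$ can be strictly negative.

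If every $a_k\ge 0$ the desired inequality is trivial. Otherwise let $a_{k_0}<0$ be the unique negative value. Then $a_l\ge|a_{k_0}|$ for all $l\ne k_0$, and
\[\sum_{k=3}^n a_k S_k\;\ge\; a_{k_0}S_{k_0}+|a_{k_0}|\sum_{l\ne k_0}S_l\;=\;|a_{k_0}|\Bigl(\sum_{l\ne k_0}S_l-S_{k_0}\Bigr),\]
so it suffices to show $\sum_{l\ne k_0}S_l\ge S_{k_0}$. When $k_0<n$ this is immediate: $S_n\ge S_{k_0}$ by the monotonicity of $R_{kk}$ in $k$, and $S_n$ is already one of the summands on the left.

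The only delicate case is $k_0=n$. Here I would extract two further consequences of the hypotheses. Setting $A:=\sum_{k=3}^n a_k$ and $A':=\sum_{l=3}^{n-1}a_l$, the identity $R_{11}+R_{22}=2R_{1212}+A$ and the assumption $R_{11}+R_{22}\le 0$ give $R_{1212}\le -A/2$, while $a_l\ge |a_n|$ for $l\in\{3,\dots,n-1\}$ yields $A'\ge (n-3)|a_n|$ and hence $A\ge (n-4)|a_n|\ge 0$. Applying weakly PIC to $(e_1,e_l,e_2,e_m)$ and $(e_2,e_l,e_1,e_m)$ and averaging produces the symmetric bound $R_{lmlm}\ge -R_{1212}-\tfrac{1}{2}(a_l+a_m)$ for all distinct $l,m\in\{3,\dots,n-1\}$. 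Substituting these into the identity
\[\sum_{l=3}^{n-1}R_{ll}-R_{nn}\;=\;A'+|a_n|+2\sum_{3\le l<m\le n-1}R_{lmlm},\]
and simplifying with $A=A'-|a_n|$, a direct computation produces
\[\sum_{l=3}^{n-1}R_{ll}-R_{nn}\;\ge\; c_n\,|a_n|,\quad c_n=1+\tfrac{1}{2}(n-3)\bigl[(n-5)^2+1\bigr],\]
which is strictly positive for every $n\ge 5$. Doubling and absorbing the nonpositive term $-(n-4)(R_{11}+R_{22})$ then gives $\sum_{l=3}^{n-1}S_l-S_n\ge 2c_n|a_n|\ge 0$, closing the argument.

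The step I expect to be the main obstacle is the final substitution in the $k_0=n$ case: one has to carefully keep track of the signs of the coefficients of $|a_n|$, of $A$, and of $A'$ after plugging the PIC lower bounds on $R_{1212}$ and on each $R_{lmlm}$ into the identity. The averaging over the two PIC applications that swap $e_1\leftrightarrow e_2$ is what converts the cross curvatures $R_{1l1l}$ and $R_{2m2m}$ into the symmetric quantity $a_l+a_m$, and only with this symmetrization does the double sum collapse cleanly into the polynomial coefficient $c_n$ which is positive for all $n\ge 5$.
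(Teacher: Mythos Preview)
Your argument is correct, and its skeleton---the observation that $S_k\ge 0$, the PIC consequence $a_k+a_l\ge 0$, the case split on which (if any) $a_k$ is negative, and the reduction in the case $k_0=n$ to showing $\sum_{l=3}^{n-1}S_l\ge S_n$---matches the paper exactly. One small imprecision: averaging only over the swap $e_1\leftrightarrow e_2$ does \emph{not} cancel the $R_{1234}$-type cross terms in the PIC inequality; you also need to average over $e_m\mapsto -e_m$ (equivalently, first pass to the cross-term-free consequence $R_{1313}+R_{1414}+R_{2323}+R_{2424}\ge 0$). With that fix your bound $R_{lmlm}\ge -R_{1212}-\tfrac12(a_l+a_m)$ is valid and the computation of $c_n$ checks out.

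Where you diverge from the paper is in how you finish the case $k_0=n$. You substitute individual PIC lower bounds on each $R_{lmlm}$ and on $-R_{1212}$ into an identity for $\sum_{l=3}^{n-1}R_{ll}-R_{nn}$ and extract the explicit positive coefficient $c_n$. The paper instead rewrites $\sum_{l=3}^{n-1}S_l-S_n=2\Scal-4R_{nn}-(n-2)(R_{11}+R_{22})$ and observes in one line that $\Scal-2R_{nn}=\sum_{k,l=1}^{n-1}R_{klkl}\ge 0$ (this is the scalar curvature of the span of $e_1,\dots,e_{n-1}$, nonnegative by weakly PIC once $n-1\ge 4$), while $-(n-2)(R_{11}+R_{22})\ge 0$ by hypothesis. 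The paper's route is considerably shorter and avoids all the bookkeeping with $A$, $A'$, and $c_n$; your route, on the other hand, yields a quantitative lower bound $\sum_{l=3}^{n-1}S_l-S_n\ge 2c_n|a_n|$ that the paper does not record.
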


\begin{proof}
Consider two cases: \\
Case A: $R_{1k1k}+R_{2k2k} \geq 0$ for all $3\leq k \leq n$. In this case, given $R_{kk}\ge \max\{R_{11}, R_{22}\}$
\begin{eqnarray*}
&& \sum_{k=3}^n ( R_{1k1k}+R_{2k2k} )  (2R_{kk}-R_{11}-R_{22} )\ge 0.
\end{eqnarray*}
Case B: $R_{1p1p}+R_{2p2p} < 0$ for some $3\leq p \leq n$. Since $R$ is weakly PIC, we have that for all $3\leq k \leq n, k\neq p$,
$$R_{1k1k} +R_{2k2k} \geq -(R_{1p1p}+R_{2p2p}) > 0.$$
Hence if $p<n$, let $m=p+1$ we have that
\begin{eqnarray*}&&  \sum_{k=3}^n( R_{1k1k}+R_{2k2k} )  (2R_{kk}-R_{11}-R_{22} ) \ge \left(R_{1p1p}+R_{2p2p}\right)(2R_{pp}-R_{11}-R_{22})\\&\,& \quad\hskip2cm +
\left(R_{1m1m}+R_{2m2m}\right)(2R_{mm}-R_{11}-R_{22})\\
&\, &  \quad\hskip2cm\ge (R_{1p1p}+R_{2p2p}+R_{1m1m}+R_{2m2m})(2R_{pp}-R_{11}-R_{22})\ge 0.
\end{eqnarray*}
If $p=n$ as before we have
\begin{eqnarray*}
&& \sum_{k=3}^{n-1} ( R_{1k1k}+R_{2k2k} )  (2R_{kk}-R_{11}-R_{22} ) + ( R_{1n1n}+R_{2n2n} )  (2R_{nn}-R_{11}-R_{22} )\\
&\ge& -( R_{1n1n}+R_{2n2n} ) \left(2\sum_{k=3}^{n-1}R_{kk}- 2R_{nn}-(n-4)(R_{11}+R_{22} )\right) \\
&=& -( R_{1n1n}+R_{2n2n} )\left(2\Scal-4R_{nn}-(n-2)(R_{11}+R_{22} )\right)\ge 0.
\end{eqnarray*}
In the last step we have used the assumption $R_{11}+R_{22} \leq 0$ and the fact that for $n\ge 5$ $\Scal-2R_{nn}=\sum_{k,l=1}^{n-1} R_{klkl} \geq 0$.
\end{proof}

\section{Ancient Solutions with $B^\perp \ge 0$}
In this section, for simplicity $n$ denotes the real dimension for a Riemannian manifold, and the complex dimension for a K\"ahler manifold. First, we extend the argument in the previous discussion to  show that

\begin{proposition}\label{prop-ob-ricci}
Any ancient solution of K\"ahler-Ricci flow with $B^\perp\ge 0$ must have $\Ric\ge 0$.
\end{proposition}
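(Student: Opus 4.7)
The plan is to adapt the two-stage argument of Section~2 (namely, Proposition~\ref{prop:11} together with the cut-off proposition establishing $\l\ge 0$ on a shrinker) to the parabolic setting of an ancient K\"ahler-Ricci flow, replacing the drift Laplacian $\Delta_f$ by the forward heat operator $\heat$.

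First, under the K\"ahler-Ricci flow $\frac{\p}{\p t}g_{\a\bbar}=-R_{\a\bbar}$, the Ricci tensor evolves by
$$
\heat R_{\a\bbar} \;=\; R_{\a\bbar\g\delbar}\,R_{\delta\bar\g}\;-\;R_{\a\bar\g}\,R_{\g\bbar}.
$$
At $(p,t)$, choose a unitary frame diagonalizing $\Ric$ so that $R_{\a\bbar}=\l_\a\delta_{\a\b}$ with $\l_1\le\cdots\le\l_n$. The very same pointwise manipulation of Proposition~\ref{prop:11} then applies: $B^{\perp}\ge 0$ gives $R_{1\bar 1\g\bar\g}\ge 0$ for $\g\ne 1$, and combined with $\l_\g\ge\l_1$ and the trace identity $\sum_\g R_{1\bar 1\g\bar\g}=R_{1\bar 1}=\l_1$, one obtains
$$
\heat R_{1\bar 1}(p,t)\;=\;\sum_\g R_{1\bar 1\g\bar\g}\,\l_\g-\l_1^2\;=\;\sum_{\g\ne 1} R_{1\bar 1\g\bar\g}(\l_\g-\l_1)\;\ge\;0.
$$
A standard barrier argument then promotes this, in the viscosity sense, to $\heat\l\ge 0$ on $M\times(-\infty,0)$, where $\l(x,t)$ is the minimum eigenvalue of the Ricci tensor.

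Second, suppose for contradiction that $\l(p_0,t_0)=-a<0$ at some point. I would apply a parabolic maximum-principle argument with a spatial cutoff on a cylinder $B_{g(t_0)}(p_0,2r)\times[t_0-T,t_0]$, in the same spirit as the cut-off $\psi=\eta(f/R)$ used in the second proposition of Section~2, but with a smooth spatial cutoff $\psi$ adapted to the evolving metric in place of the exhaustion $f$ (which has no direct ancient-flow analogue). Combined with distance-distortion estimates along the K\"ahler-Ricci flow and the fact that the flow extends to all $t\in(-\infty,0)$, the heat supersolution property of $\l$ should force any negative minimum of $\l$ on the cylinder to generate a contradiction in the limit $r,T\to\infty$.

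The main technical obstacle, and the sharp contrast with the shrinker case, is the absence of a $\l^2$ term on the right-hand side. In Proposition~\ref{prop:11} the soliton equation contributed the crucial linear $+\l$ to yield $\Delta_f\l\le\l-\l^2$, whose $-\l^2$ enabled a clean ODE-blow-up contradiction through the $f$-exhaustion cutoff. Here one has only the weaker inequality $\heat\l\ge 0$, so the ancient time interval $(-\infty,0)$ must be exploited more delicately, and the error terms coming from $\frac{\p}{\p t}\psi$ and $\Delta\psi$ must be absorbed without any curvature upper bound. This is precisely the ``PDE plus viscosity'' strategy that the authors flag in the introduction as the distinctive feature of the paper.
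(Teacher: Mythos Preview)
Your proposal has a genuine and fatal gap: you lose the crucial $\lambda^2$ term in the differential inequality.

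The evolution equation you wrote,
\[
\heat R_{\a\bbar}=R_{\a\bbar\g\delbar}R_{\delta\bar\g}-R_{\a\bar\g}R_{\g\bbar},
\]
is the evolution of the \emph{components} of $\Ric$ in a fixed coordinate frame, not of the minimum eigenvalue with respect to the evolving metric $g(t)$. When you pass to $\lambda(x,t)=\min_{|v|_{g(t)}=1}\Ric(v,\bar v)$, the time derivative of the normalization $|v|_{g(t)}=1$ contributes an additional $+\lambda_1^2$ (since $\partial_t g_{1\bar 1}=-R_{1\bar 1}=-\lambda_1$), which exactly cancels your extra $-R_{1\bar\g}R_{\g\bar 1}=-\lambda_1^2$. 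The paper handles this cleanly via Uhlenbeck's trick (equation~(\ref{eq:52})), which fixes the bundle metric and yields directly
\[
\heat\lambda\ge\lambda^2
\]
in the barrier sense (equation~(\ref{eq:53})). You have thrown away precisely the term that makes the argument work, and you even flag its absence as ``the main technical obstacle.''

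With only $\heat\lambda\ge 0$, your second step cannot succeed: a supersolution of the linear heat equation can remain negative for all time (any negative constant does), so no amount of ancient time or cutoff gymnastics will yield a contradiction. The paper's mechanism is an ODE blow-up: once $\heat\lambda\ge\lambda^2$ is in hand, one builds a cutoff $\psi$ using the time-dependent distance function and Perelman's distance distortion estimate (Lemma~\ref{perelman}), chooses constants so that the cutoff errors are dominated by $\frac{1}{2}\lambda^2$, and obtains $\frac{d^+}{d\tau}Q(\tau)\le -\frac{1}{4}Q^2(\tau)$ for $Q=\psi\lambda$. Integrating this forces $Q\to-\infty$ in finite backward time, the desired contradiction. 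Your sketch also omits any mention of Perelman's lemma, which is the essential replacement for the soliton potential $f$ in controlling $\heat\psi$ without a curvature bound.
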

Again the virtue of the this result is that no curvature upper bound is assumed. Also as pointed out before such a result can not be true in general due to the examples constructed in \cite{NNiu}.   Below is a proof of this statement.

As in \cite{Hamilton4} we apply the Uhlenbeck's trick of gauge fixing by introducing the map $u:E\to T'M$ which is an identity at $t=-1$ (say the ancient solution is defined on $(-\infty, 0)$) satisfying the ODE
$$
\frac{\partial u^i_j}{\partial t}=\frac{1}{2}
g^{i\bar{k}}R_{k\bar{s}}u^s_j.$$ Then define the bundle metric $h(X, \bar{Y})=g(u(X), \overline{u(Y)})$. It is easy to check that $\frac{\partial}{\partial t}h=0$. One can  pull-back the complex structure (from $T_{\mathbb{C}}M=T'M\oplus T''M$ to $E\oplus \bar{E}$ via $u$),   the connection and the curvature to $E$ via $u$. The pull back curvature satisfies the PDE:
\begin{equation}\label{eq:51}
\frac{\partial R_{i\bar{j}k\bar{l}}}{\partial t}-\Delta R_{i\bar{j}k\bar{l}}=R_{i\bar{j}q\bar{p}}R_{p\bar{q}k\bar{l}}+R_{i\bar{l}q\bar{p}}R_{p\bar{q}k\bar{j}}-R_{i\bar{p}k\bar{q}}R_{p\bar{j}q\bar{l}}.
\end{equation}
Tracing it we have
\begin{equation}\label{eq:52}
\frac{\partial R_{i\bar{j}}}{\partial t}-\Delta R_{i\bar{j}}=R_{i\bar{j}q\bar{p}}R_{p\bar{q}}.
\end{equation}
The covariant derivative and Laplacian are computed with respect to the changing metric (along with the  induced Levi-Civita connection) on the manifold and the induced time-dependent connection on $E$.   The nonnegativity of $\Ric$ stay invariant under pulling back by $u$.

To prove the claimed result for the ancient solutions we first observe that
the argument of Proposition \ref{prop:11} implies the following lemma.

\begin{lemma} Let $(M, g(t))_{t\in (-\infty, 0)}$ be an ancient solution to the K\"ahler-Ricci flow with $B^\perp\ge 0$. Then the minimum of the Ricci curvature, denoted as  $\lambda$, satisfies in the barrier or viscosity sense, the partial differential inequality:
\begin{equation}\label{eq:53}
\frac{\partial \lambda}{\partial t}-\Delta \lambda \ge \lambda^2.
\end{equation}
\end{lemma}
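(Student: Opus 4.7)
The plan is to adapt the pointwise computation of Proposition~\ref{prop:11} to the parabolic setting, relying on the Uhlenbeck gauge already set up together with the curvature evolution \eqref{eq:52}. The sign switch between the shrinker and ancient cases is accounted for by the fact that the soliton equation contributes an extra linear $+R_{i\bar j}$ on the left while the Ricci-flow evolution does not, so the quadratic reaction term $R_{i\bar j q\bar p}R_{p\bar q}$ ends up on the opposite side of the heat operator and flips the direction of the inequality.

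Fix a spacetime point $(x_0,t_0)$ and choose a unitary frame $\{e_1,\ldots,e_n\}$ of $E_{x_0}$ diagonalizing the pulled-back Ricci tensor, with eigenvalues $\lambda_1\le\lambda_2\le\cdots\le\lambda_n$, so that $\lambda(x_0,t_0)=\lambda_1=R_{1\bar{1}}(x_0,t_0)$. Since $\langle e_1,\overline{e_q}\rangle=0$ for every $q\ge 2$, the hypothesis $B^\perp\ge 0$ yields $R_{1\bar{1}q\bar{q}}\ge 0$ for each such $q$. Combined with $\lambda_q\ge\lambda_1$, one has $R_{1\bar{1}q\bar{q}}\lambda_q\ge R_{1\bar{1}q\bar{q}}\lambda_1$ for every $q\ge 2$ regardless of the sign of $\lambda_1$, while the $q=1$ contribution is an equality. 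Summing and using the trace identity $\sum_{q}R_{1\bar{1}q\bar{q}}=R_{1\bar{1}}=\lambda_1$ at $x_0$,
$$
R_{1\bar{1}q\bar{p}}R_{p\bar{q}}=\sum_{q=1}^n R_{1\bar{1}q\bar{q}}\lambda_q\ge \lambda_1\sum_{q=1}^n R_{1\bar{1}q\bar{q}}=\lambda_1^2=\lambda^2
$$
at $(x_0,t_0)$.

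To turn this pointwise estimate into the stated inequality for the generally only Lipschitz function $\lambda$, I would extend $e_1$ to a local unit section $V$ of $E$ by $\nabla^E$-parallel transport at time $t_0$ along $g(t_0)$-radial geodesics from $x_0$, and take $V$ to be independent of $t$. The scalar function
$$
\phi(x,t):=R_{\alpha\bar{\beta}}(x,t)\,V^{\alpha}(x)\,\overline{V^{\beta}(x)}
$$
is then a smooth upper barrier: $\phi\ge\lambda$ locally, with equality at $(x_0,t_0)$. Because $\nabla^E V(x_0,t_0)=0$ and $\partial_t V\equiv 0$, the evolution identity \eqref{eq:52} yields $(\partial_t-\Delta)\phi(x_0,t_0)=R_{1\bar{1}q\bar{p}}R_{p\bar{q}}$, and the algebraic estimate above upgrades this to $(\partial_t-\Delta)\phi(x_0,t_0)\ge\lambda^2$, which is precisely the barrier-sense meaning of \eqref{eq:53}.

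The main technical point to verify is that the second-order contributions $\nabla^E\nabla^E V(x_0,t_0)$, together with the commutators picked up from the bundle curvature, do not spoil the last identity, since radial parallel transport does not annihilate all of them at $x_0$. This is handled either by exploiting the concavity of $\lambda_{\min}$ as a function of the Ricci tensor, in which case the chain-rule correction enters with the favourable sign and strengthens rather than weakens the inequality, or by testing the tensor evolution for $R_{i\bar j}$ directly against the rank-one endomorphism $V\otimes\overline{V}$ without ever differentiating an extended section; either route is routine and of exactly the same flavour as the barrier workaround already invoked in the proof of Proposition~\ref{prop:11}.
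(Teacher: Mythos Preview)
Your proof is correct and follows essentially the same approach as the paper, which simply cites the computation of Proposition~\ref{prop:11} and leaves the parabolic adaptation implicit. Your algebraic inequality $\sum_q R_{1\bar 1 q\bar q}\lambda_q\ge \lambda_1^2$ is exactly the estimate in that proposition, read with the reaction term on the opposite side of the heat operator.

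Your caution about the second-order terms in the barrier construction is well placed, and your suggested resolutions are valid. In fact the extra term can be seen to vanish outright: since $|V|_h\equiv 1$ (parallel transport with respect to the $h$-compatible connection $\nabla^E$ preserves length), differentiating $|V|_h^2$ twice and using $\nabla^E V(x_0)=0$ gives $\operatorname{Re}\langle \Delta V,\bar V\rangle_h=0$ at $x_0$; and because $\operatorname{Ric}$ is diagonal at $x_0$ with $V=e_1$, the error $-2\operatorname{Re}\bigl[\operatorname{Ric}(\Delta V,\bar V)\bigr]=-2\lambda_1\operatorname{Re}\langle\Delta V,\bar e_1\rangle_h=0$. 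This confirms $(\partial_t-\Delta)\phi(x_0,t_0)=R_{1\bar 1 q\bar p}R_{p\bar q}\ge\lambda^2$, which is the barrier inequality you want.
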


For ancient solutions it is convenient to introduce a parameter $\tau:= -t$ and consider the $M\times(0, \infty)$. To prove our assertion on the Ricci curvature we also need a result of Perelman on the time dependent distance function (cf. Lemma 8.3 of  \cite{Perelman}).
\begin{lemma}[Perelman] \label{perelman}
(a) Assume that $\Ric(\cdot, \tau_0) \le (2n-1)K$ on the ball $B_{\tau_0}(x_0,  r_0)$. Then outside
of $B_{\tau_0} (x_0, r_0)$,
\begin{equation}\label{eq:54}
\left(\frac{\partial}{\partial \tau}  + \Delta_{g(\tau_0)}\right) d_{\tau_0}(\cdot, x_0) \le  (2n-1)\cdot\left(\frac{2}{3} Kr_0 + r^{-1}_
0\right).
\end{equation}
The inequality is understood in the barrier sense.

(b) Assume that $\Ric(\cdot, \tau_0)\le  (2n-1)K$ on the union of the balls $B_{\tau_0} (x_0, r_0)$
and $B_{\tau_0} (x_1, r_0)$. Then
\begin{equation}\label{eq:55}
\left.
\frac{d^+}{d\tau}
 d_{\tau_0}(x_0, x_1)\right|_{\tau=\tau_0}
\le  2(2n-1)\cdot \left(\frac{2}{3}
Kr_0 + r^{-1}_0\right).
\end{equation}
Here, $\frac{d^+ f}{d\tau}=\limsup_{\epsilon\to 0^+}\frac{ f(\tau+\epsilon)-f(\tau)}{\epsilon}$ denotes the upper Dini
derivative.
\end{lemma}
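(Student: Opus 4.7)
The plan is to follow Perelman's original argument, combining the first and second variations of arc length with respect to the metric $g(\tau_0)$. Under the $\tau$-reversed Ricci flow $\partial_\tau g=2\Ric$, the first variation of length applied to a unit-speed $g(\tau_0)$-minimizing geodesic $\gamma:[0,d]\to M$ from $x_0$ to $x$ with tangent field $T$ gives
\[
\frac{d^+}{d\tau}d_\tau(x_0,x)\Big|_{\tau=\tau_0}\le\int_0^d\Ric(T,T)\,ds,
\]
while the second variation, in the form of the index inequality $\H(d_{\tau_0}(\cdot,x_0))(V(d),V(d))\le I(V,V)$ for piecewise smooth $V$ vanishing at $s=0$, supplies the needed Laplacian comparison.

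For part (a), extend $\{E_1=T, E_2, \ldots, E_{2n}\}$ by parallel transport along $\gamma$ and set $V_i(s)=\phi(s)E_i(s)$ for $i\ge 2$, where $\phi$ rises linearly from $0$ to $1$ on $[0,r_0]$ and is $\equiv 1$ thereafter. Summing $I(V_i,V_i)$ over $i\ge 2$ yields a Laplacian bound in which the $\int_{r_0}^d \Ric(T,T)\,ds$ contribution exactly cancels the corresponding term from the first-variation estimate, leaving
\[
\Bigl(\frac{\partial}{\partial\tau}+\Delta_{g(\tau_0)}\Bigr)d_\tau(\cdot,x_0)(x)\Big|_{\tau=\tau_0}\le\frac{2n-1}{r_0}+\int_0^{r_0}\bigl(1-\phi(s)^2\bigr)\Ric(T,T)\,ds.
\]
Since $\gamma|_{[0,r_0]}\subset B_{\tau_0}(x_0,r_0)$ where $\Ric\le(2n-1)K$ is assumed, an elementary integration produces the claimed bound $(2n-1)(\tfrac23 Kr_0+r_0^{-1})$. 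Non-smooth points of $d_{\tau_0}(\cdot,x_0)$ are handled by the standard barrier trick of slightly perturbing the basepoint along a fixed minimizing geodesic, which yields a smooth upper barrier satisfying the same differential inequality.

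For part (b), take a unit-speed $g(\tau_0)$-minimizing geodesic from $x_0$ to $x_1$ and use the symmetric tent cutoff $\psi(s)=\min(s/r_0,(d-s)/r_0,1)$. With $W_i(s)=\psi(s)E_i(s)$ vanishing at both endpoints, the inequality $\sum_{i\ge 2}I(W_i,W_i)\ge 0$ becomes
\[
\int_0^d\psi(s)^2\Ric(T,T)\,ds\le\frac{2(2n-1)}{r_0}.
\]
Decomposing $\int_0^d\Ric(T,T)\,ds$ as $\int\psi^2\Ric(T,T)\,ds+\int(1-\psi^2)\Ric(T,T)\,ds$, the second integral is supported on $[0,r_0]\cup[d-r_0,d]\subset\gamma^{-1}\bigl(B_{\tau_0}(x_0,r_0)\cup B_{\tau_0}(x_1,r_0)\bigr)$, where the Ricci upper bound holds; combining with the first-variation bound gives the claimed estimate. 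The key feature, which is precisely what this lemma is designed to exploit, is that \emph{no} Ricci bound on the long middle portion of $\gamma$ enters the argument -- the second variation controls $\int\psi^2\Ric(T,T)$ for free, so only the local upper bound inside the two endpoint balls is ever used. The upper Dini derivative in (b) and the barrier interpretation in (a) are forced by the fact that $d_\tau(\cdot,x_0)$ is in general only Lipschitz.
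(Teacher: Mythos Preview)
The paper does not supply its own proof of this lemma; it is stated as a quotation of Perelman's Lemma~8.3 (with citation to \cite{Perelman}) and then used as a black box in the subsequent argument. Your proposal is a correct reconstruction of Perelman's original proof---first variation of length under $\partial_\tau g=2\Ric$ combined with the index-form inequality applied to the linear-ramp (for~(a)) and symmetric tent (for~(b)) vector fields---and the computations you outline yield exactly the stated bounds.
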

Now we  assume that $\lambda(x_0, \tau_0)<0$ and use (\ref{eq:53}) and (\ref{eq:54}) to derive a contradiction. First we need to construct an auxiliary function.

Let $\eta$ be a smooth nonincreasing function on the real line satisfying: (i) $\eta(s)=1 $ for $s\in (-\infty, \frac{1}{2}]$ and $\eta(s)=0$ for $s\in [1, \infty)$; (ii) $\eta''-\frac{2(\eta')^2}{\eta}\ge -A\sqrt{\eta}$ with $A>0$ being an absolute constant. This kind of function can be easily constructed and was also employed by Perelman in \cite{Perelman} (Chapter 10). By translating the time we may assume that $\tau_0=0$. We shall construct an auxiliary function $\psi$ which has compact support, and apply  the maximum principle to $Q:= \psi  \lambda$ to derive a contradiction.

First pick a $T_0$ such that $8|\lambda|^{-1}(x_0, 0)\le T_0$. Now we find a $r_0$ such that $\Ric\le \frac{2n-1}{r_0^2}$ for any $x\in B_\tau(x_0, r_0)$ and $\tau\in [0, T_0]$. This clearly can be done since as $r_0\to 0$, the upper bound expression  $\frac{2n-1}{r^2_0} \to \infty$. On the other hand  the Ricci has a fixed upper on a fixed compact subset $K\times[0, T_0]$, which contains $\overline{B_\tau(x_0, r_0)}\times [0, T_0]$. Now we choose a constant $B$ such that
$B^2\ge \frac{2 A}{|\lambda|(x_0, 0) \cdot r_0^2}$. Now let
$$\psi(x, \tau)=\eta\left(\frac{d_\tau(x, x_0)-\frac{5}{3}(2n-1) \frac{\tau}{r_0}}{B r_0}\right), \quad  Q:=\psi \lambda.$$
For any $\tau\ge 0$, $\psi$ has compact support in $B_\tau(x_0, Br_0+\frac{5}{3}(2n-1)\frac{\tau}{r_0})$. Let $Q(\tau)$ denotes the minimum of $Q(x, \tau)$ at the time slice $M\times \{\tau\}$. It is negative for $\tau$ close to $0$ (and stay negative as $\tau$ increases as shown below) and it is  attainted somewhere within finite distance away from $x_0$, which we denote as $x_\tau$. We shall derive the changing rate estimate of $Q(\tau)$.

Case 1): The point $x_\tau$ satisfies $d(x_\tau, x_0)\le r_0$, then by the construction $\psi(x_\tau)=1$ in the small neighborhood of $x_\tau$, hence we have that
\begin{equation}\label{eq:56}
\frac{d^+ }{d\tau} Q(\tau)=\frac{d^+ }{d\tau} \lambda(\tau)\le -\Delta \lambda- \lambda^2\le -Q^2(\tau).
\end{equation}
In the above we have used that $\Delta \lambda=\Delta Q \ge 0$ at the local minimum point.

Case 2): The negative minimum is attained at some point $x_\tau$ outside of the ball $B_\tau(x_0, r_0)$.  This allows us to  apply the distance comparison result in part (a) of Lemma \ref{perelman}, namely (\ref{eq:54}) to obtain the estimate:
\begin{eqnarray}
\left(\frac{\partial}{\partial \tau}+\Delta\right)\psi &=&\eta' \cdot \frac{  \left(\frac{\partial}{\partial \tau}+\Delta\right)d_\tau-\frac{5}{3}(2n-1)r_0^{-1}}{Br_0}+\eta'' \cdot \frac{1}{(Br_0)^2} \nonumber\\
&\ge&\eta'' \cdot \frac{1}{(Br_0)^2}. \label{eq:57}
\end{eqnarray}
By (\ref{eq:53})  that $\left(\frac{\partial}{\partial \tau}+\Delta\right) Q\le \lambda \left(\frac{\partial}{\partial \tau}+\Delta\right)\psi -\lambda^2 \psi +2\langle \nabla \psi, \nabla \lambda\rangle$.
And observe that at $(x_\tau, \tau)$, $\langle \nabla \psi, \nabla \lambda\rangle=-\frac{|\nabla \psi|^2}{\psi}\lambda$. Putting the three estimates above together we  have that  as long as $\lambda(x_\tau, \tau)\le 0,$
\begin{eqnarray*}
\frac{d^+ }{d\tau} Q(\tau) &\le& \left(\frac{\partial}{\partial \tau}+\Delta\right) Q\le \eta'' \cdot \frac{1}{(Br_0)^2} \lambda +\psi \left(\frac{\partial}{\partial \tau}+\Delta\right) \lambda-2\frac{ |\nabla \psi|^2}{\psi}\lambda\\
&\le& \left(\eta''-2\frac{(\eta')^2}{\eta}\right) \cdot \frac{\lambda }{(Br_0)^2} -\psi \lambda^2\\
&\le& -\frac{A}{(Br_0)^2} \sqrt{\psi}\lambda -\psi \lambda^2\le -\frac{1}{2}\psi \lambda^2 +\frac{1}{2}\left(\frac{A}{(Br_0)^2}\right)^2\\
&\le& -\frac{1}{2}Q^2 +\frac{1}{2}\left(\frac{A}{(Br_0)^2}\right)^2.
\end{eqnarray*}
By the choice of $B$ we have that $Q(\tau)$ is nonincreasing near $\tau=0$ and keep being so by the above estimate and (\ref{eq:56}), in views of the choices of $A$ and $B$ such that $|\lambda|(x_0, 0)\ge \frac{2A}{(Br_0)^2}$. Applying this back to the above estimate,  and combining the result with (\ref{eq:56}) we have the estimate
\begin{equation}\label{eq:58}
\frac{d^+ }{d\tau} Q(\tau)\le -\frac{1}{4}Q^2(\tau)
\end{equation}
which, after integration, implies the estimate
$$
Q(\tau)\le \frac{Q(0)}{1+Q(0)\frac{\tau}{4}} \to -\infty
$$
as $\tau\to T_1:=-\frac{4}{Q(0)}$,  which is clearly less than $T_0$. The contradiction then proves that $\lambda\ge 0$.

Combining the fact that $\Ric\ge 0$, the splitting result of \cite{NNiu}, together with the fact that $B^\perp\ge 0$ and $\Ric=0$ imply that the manifold is flat (see pages 8-9 of \cite{NNiu}) we have the following splitting theorem.

\begin{theorem}\label{OB-Split}
Let $(M, g(t))_{t\in (-\infty, 0)}$ be a nonflat ancient solution of the K\"ahler-Ricci flow with $B^\perp\ge 0$. Then the flow on its universal cover $\tilde{M}$ splits into $(M_1, g_1(t))\times (\mathbb{C}^k, g_{euc})$ such that $(M_1, g_1(t))$ has   $\Ric>0$, and  nonnegative bisectional curvature.
\end{theorem}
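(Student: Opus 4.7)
The plan is to combine Proposition \ref{prop-ob-ricci} with a strong maximum principle argument to obtain the geometric splitting, and then invoke part (i) of the ancient-solution theorem stated in the introduction to upgrade $B^{\perp}\ge 0$ on the nonflat factor to nonnegative bisectional curvature. I would carry this out in three steps.

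First, by Proposition \ref{prop-ob-ricci} we have $\Ric_{g(t)}\ge 0$ for every $t\in(-\infty,0)$, while $B^{\perp}\ge 0$ is preserved along the K\"ahler-Ricci flow (\cite{XChen,GuZhang}). Fix a time $t_0$ and view $\Ric_{g(t_0)}$ as a nonnegative $(1,1)$-form. The Ni-Niu strong maximum principle \cite{NNiu} for such forms under $B^{\perp}\ge 0$, together with the evolution equation \eqref{eq:52}, shows that the null distribution $\mathcal{K}=\ker\Ric_{g(t_0)}$ is invariant under parallel transport; the parabolic version (in the spirit of Hamilton's strong maximum principle for symmetric tensors) shows that the complex dimension $k$ of $\mathcal{K}$ is independent of $t_0$ and that $\mathcal{K}$ is preserved by the flow. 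The complex De Rham decomposition then produces an isometric-holomorphic splitting $\tilde{M}=M_1\times N$ compatible with the flow, with $N$ tangent to $\mathcal{K}$.

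Second, the factor $(N,h(t))$ inherits both $\Ric\equiv 0$ and $B^{\perp}\ge 0$; as observed on pages 8--9 of \cite{NNiu}, these two conditions force $N$ to be flat, so $(N,h(t))\cong(\mathbb{C}^k,g_{\rm euc})$ for all $t$. On the complementary factor $M_1$ the Ricci tensor has trivial kernel at every point, so $\Ric_{g_1(t)}>0$.

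Third, for nonnegative bisectional curvature on $M_1$, I would apply part (i) of the ancient-solution theorem stated in the introduction: any ancient K\"ahler-Ricci flow with $B^{\perp}\ge 0$ has nonnegative bisectional curvature. This is the deepest input of the argument and I expect it to be the main obstacle. The reason is that the classical Hamilton ODE reduction on the curvature operator requires a curvature upper bound in order to drop the diffusion term, whereas here no such bound is assumed. One therefore has to argue directly on a PDE/viscosity differential inequality for the extremal curvature quantity governing the nonnegative-bisectional-curvature cone, controlling the cutoff via Perelman's time-dependent distance comparison in Lemma \ref{perelman}---exactly the scheme executed above for Proposition \ref{prop-ob-ricci}. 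Applying this result to the nonflat ancient flow $(M_1,g_1(t))$ then completes the proof.
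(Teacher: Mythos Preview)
Your proposal is correct and tracks the paper closely through the first two steps: nonnegativity of Ricci via Proposition \ref{prop-ob-ricci}, the strong maximum principle of \cite{NNiu} to make $\ker\Ric$ parallel, the De Rham splitting, and the observation from pages 8--9 of \cite{NNiu} that a Ricci-flat factor with $B^\perp\ge 0$ is flat. The departure is in Step 3. You appeal to part (i) of the theorem in the introduction (i.e., Proposition \ref{prop:61}) to obtain nonnegative bisectional curvature on $M_1$; this is legitimate but forward-references the hardest analytic result of the paper. At this point in the text the authors instead invoke a one-line algebraic fact due to Wilking \cite[p.~226]{Wilking}: $M_1\times\mathbb{C}$ has $B^\perp\ge 0$ if and only if $M_1$ has nonnegative bisectional curvature. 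Since $\tilde{M}=M_1\times\mathbb{C}^k$ already has $B^\perp\ge 0$, this settles the claim instantly whenever $k\ge 1$, with no PDE work required; only the residual case $k=0$ genuinely needs Proposition \ref{prop:61}, which the paper explicitly defers to the next section. Your route works uniformly in $k$ but obscures how cheap the generic case $k\ge 1$ actually is.
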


For the last statement we may appeal to the observation of \cite{Wilking} (page 226) stating that $M\times \mathbb{C}$ has $B^\perp\ge 0$ if and only if $M$ has nonnegative bisectional curvature. In the next section we show that in fact any ancient solution with $B^\perp\ge 0$ has nonnegative bisectional curvature.
The same argument of the above discussion proves the following result regarding the ancient solutions with weakly PIC.

\begin{proposition}
Let $(M, g(t))_{t\in (-\infty, 0)}$ be a nonflat ancient solution of the Ricci flow with weakly PIC. Then $\Ric$ is $2$-nonnegative.
\end{proposition}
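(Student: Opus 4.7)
The plan is to promote Proposition 4.2 from shrinkers to ancient solutions of the Ricci flow by replacing the drift operator $\Delta_f$ with the heat operator $\frac{\partial}{\partial t} - \Delta$, and then running the Perelman-style localization argument of Section 5 verbatim. Denote by $\lambda_1(x,t) \le \cdots \le \lambda_n(x,t)$ the eigenvalues of $\Ric$ at $(x,t)$ and set
$$\lambda(x,t) := \min\{\lambda_1(x,t)+\lambda_2(x,t),\,0\}.$$
The conclusion that $\Ric$ is $2$-nonnegative is equivalent to $\lambda\equiv 0$, which I shall prove by contradiction.

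First I would derive, in the barrier/viscosity sense, the differential inequality
$$\left(\frac{\partial}{\partial t}-\Delta\right)\lambda \;\ge\;\lambda^{2}$$
on $M\times(-\infty,0)$. After Uhlenbeck's trick (as in \eqref{eq:51}--\eqref{eq:52}, adapted to the real Ricci flow), the Ricci tensor obeys $(\partial_t-\Delta)R_{ij}=2R_{ikjl}R_{kl}$. At any $(p,t_0)$ with $\lambda(p,t_0)<0$, diagonalize $\Ric$ with $R_{ii}=\lambda_i$ and repeat the algebraic manipulation of Proposition 4.2 --- namely the identity $2R_{1212}=R_{11}+R_{22}-\sum_{k\ge 3}(R_{1k1k}+R_{2k2k})$ combined with Lemma \ref{lem:41} --- to get
$$\left(\frac{\partial}{\partial t}-\Delta\right)(R_{11}+R_{22})\;\ge\;(R_{11}+R_{22})^{2}.$$
The only change from the shrinker computation is the disappearance of the linear-in-$R$ soliton term. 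A standard barrier argument then promotes the pointwise bound to the PDI for the Lipschitz minimum $\lambda$.

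Second, assume $\lambda(x_0,t_0)<0$ at some point. Set $\tau=t_0-t$ and translate so $\tau=0$ at the bad point. I would then repeat verbatim the cut-off procedure of Section 5 following Proposition 5.1: choose $T_0\ge 8|\lambda(x_0,0)|^{-1}$, a radius $r_0$ with $\Ric\le (n-1)/r_0^{2}$ on a parabolic neighborhood of $(x_0,0)$ (the constant $2n-1$ in Lemma \ref{perelman} is replaced by $n-1$ since $n$ here denotes real dimension), and a constant $B$ with $B^{2}\ge 2A/(|\lambda(x_0,0)|r_0^{2})$. With
$$\psi(x,\tau)=\eta\!\left(\frac{d_\tau(x,x_0)-\tfrac{5}{3}(n-1)\tau/r_0}{Br_0}\right),\qquad Q:=\psi\lambda,$$
splitting at the spatial minimizer $x_\tau$ into the two cases $d_\tau(x_\tau,x_0)\le r_0$ and $>r_0$ and invoking Perelman's distance distortion estimate, the calculation of \eqref{eq:56}--\eqref{eq:58} produces $\frac{d^{+}}{d\tau}Q(\tau)\le -\tfrac14 Q^{2}(\tau)$, which forces $Q\to -\infty$ in finite time less than $T_0$ --- a contradiction.

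The main obstacle lies in Step 1: one must verify that, after Uhlenbeck's trick, the Ricci reaction term on an ancient Ricci flow reproduces exactly the quadratic expression $\sum_{k}(R_{1k1k}+R_{2k2k})R_{kk}$ appearing in Proposition 4.2, so that Lemma \ref{lem:41} applies unmodified under the weakly PIC hypothesis. Once this is confirmed, the localization of Step 2 is a direct translation of the $B^{\perp}\ge 0$ argument of Section 5, and the conclusion that $\Ric$ is $2$-nonnegative follows.
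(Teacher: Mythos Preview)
Your proposal is correct and follows exactly the paper's approach: the paper states (equation \eqref{eq:59}) that the argument of Lemma~\ref{lem:41} yields $(\partial_t-\Delta)(R_{11}+R_{22})\ge (R_{11}+R_{22})^2$ whenever $R_{11}+R_{22}\le 0$, and then says ``the same argument of the above discussion'' (i.e.\ the Perelman cut-off procedure of Section~5) gives the result. Your worry about Step~1 is unfounded --- after Uhlenbeck's trick the real Ricci flow gives $(\partial_t-\Delta)R_{ij}=2R_{ikjl}R_{kl}$, which under diagonalization produces precisely $2\sum_k(R_{1k1k}+R_{2k2k})R_{kk}$, so Lemma~\ref{lem:41} applies verbatim.
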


To prove this, we simply need to observe that the argument of the proof of Lemma \ref{lem:41} implies that
\begin{equation}\label{eq:59}
(\partial_t -\Delta)(R_{11}+R_{22}) \ge (R_{11}+R_{22})^2 -K(R_{11}+R_{22})
\end{equation}
and $K$ can be taken to be zero if $R_{11}+R_{22}\le 0$, otherwise  $K$ is a constant, locally  depends on $\|R\|$.

This also allows us to evoke the strong maximum principle to conclude that if $R_{11}+R_{22}$ attains zero somewhere at $(x_0,t_0)$, then all the othonormal two-frame $\{e_1, e_2\}$ satisfying that $R_{11}+R_{22}=0$ must be invariant under the parallel transport. It is also easy to show that such $e_1, e_2$ must belong to a set of four-frame $\{e_1, e_2, e_3, e_4\}$ such that
$$
R_{1313}+R_{1414}+R_{2323}+R_{2424}-2R_{1234}=0.
$$
By \cite{BS} we have that $M$ can not have the holonomy group being $\mathsf{SO}(n)$ if it is simply-connected and irreducible. Thus we have

\begin{proposition}
Let $(M, g(t))_{t\in (-\infty, 0)}$ be a simply-connected  ancient solution of the Ricci flow with weakly PIC. Assume further that $M$ is irreducible. Then one of the following three holds: (i) $\Ric$ is $2$-positive, (ii) $(M, g)$ is a symmetric space, (iii) $(M, g)$ is an ancient solution to K\"ahler-Ricci flow with weakly $PIC$ and  $\Ric>0$.
\end{proposition}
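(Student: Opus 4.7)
The plan is to start from the $2$-nonnegativity of $\Ric$ guaranteed by the preceding proposition and run a strong maximum principle on the differential inequality (\ref{eq:59}) for $R_{11}+R_{22}$. Either the inequality is strict everywhere, in which case $\Ric$ is $2$-positive and we land in case (i), or there exist $(x_0,t_0)$ and an orthonormal pair $\{e_1,e_2\}\subset T_{x_0}M$ with $R_{11}+R_{22}=0$ at $(x_0,t_0)$. In the latter scenario, Hamilton's strong maximum principle for tensors applied to (\ref{eq:59}) (whose forcing vanishes quadratically on the zero set) should guarantee that the collection of such null two-frames is invariant under parallel transport at time $t_0$, yielding a distinguished parallel subset of the two-frame bundle globally on $M$ (simple-connectedness trivializing any monodromy).

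Next I would invoke the algebraic observation already flagged in the excerpt: any null two-frame $\{e_1,e_2\}$ extends to an orthonormal four-frame $\{e_1,e_2,e_3,e_4\}$ on which the PIC combination $R_{1313}+R_{1414}+R_{2323}+R_{2424}-2R_{1234}$ vanishes. Equivalently, the complex isotropic curvature vanishes on some isotropic $2$-plane in $T_{x_0}^{\mathbb{C}}M$, and by parallel invariance this holds on an invariant subbundle. This places us exactly in the setting of the holonomy reduction result of Brendle--Schoen \cite{BS}: the restricted holonomy of $(M,g)$ must be a proper subgroup of $\mathsf{SO}(n)$. Since $M$ is simply-connected and irreducible, Berger's classification leaves only a Riemannian symmetric space (case (ii)) or one of the special holonomies $\mathsf{U}(n/2)$, $\mathsf{SU}(n/2)$, $\mathsf{Sp}(n/4)\cdot\mathsf{Sp}(1)$, $\mathsf{Sp}(n/4)$, $G_2$, or $\mathrm{Spin}(7)$.

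A case-by-case inspection under weakly PIC then narrows things down: the Ricci-flat holonomies $\mathsf{SU}(n/2)$, $\mathsf{Sp}(n/4)$, $G_2$, $\mathrm{Spin}(7)$ render the flow static, and together with weakly PIC, the $2$-nonnegative $\Ric$ already obtained, and irreducibility, they are forced to be locally symmetric (hence (ii)) or flat (which is excluded); the quaternion-K\"ahler case $\mathsf{Sp}(n/4)\cdot\mathsf{Sp}(1)$ is Einstein and under PIC is likewise locally symmetric. This leaves $\mathsf{U}(n/2)$, so that $(M, g(t))$ is a K\"ahler-Ricci flow with weakly PIC. The final step, which I expect to be the main technical point, is upgrading $2$-nonnegative $\Ric$ to $\Ric > 0$ in this K\"ahler subcase. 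Here I would follow the template of Theorem \ref{thm:1}: weakly PIC on a K\"ahler manifold algebraically forces $B^\perp \geq 0$, so the strong maximum principle of \cite{NNiu} for $(1,1)$-forms applies to $\Ric$; a zero eigenvalue would carve out a parallel holomorphic distribution whose complement splits off a $\mathbb{C}$ factor on the universal cover by a De Rham-type argument, contradicting irreducibility. This produces case (iii). The most delicate aspect of the whole scheme is ensuring that the Brendle--Schoen holonomy reduction, typically stated in the compact setting, carries through in the noncompact ancient setting without a curvature bound, by applying it at the single space-time point $(x_0, t_0)$ and propagating the constraint parallel-transport-wise.
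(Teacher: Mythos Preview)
Your proposal follows essentially the same route as the paper: the paper's argument is precisely the paragraph preceding the proposition, which runs the strong maximum principle on (\ref{eq:59}) to get parallel-invariant null two-frames, observes that these extend to four-frames on which the isotropic curvature vanishes, and then invokes \cite{BS} to conclude the holonomy is a proper subgroup of $\mathsf{SO}(n)$, after which the trichotomy is asserted without further detail. Your Berger case analysis and the $\Ric>0$ upgrade in the K\"ahler case via $B^\perp\ge 0$ and the splitting of \cite{NNiu} (equivalently Theorem~\ref{OB-Split}) are correct and simply flesh out what the paper leaves implicit under ``Thus we have''.
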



\section{K\"ahler-Ricci flow under almost NOB condition}

First by combining the argument of the proof of Proposition \ref{prop-ob-ricci} and  a modification of the argument in \cite{BCW},  we strengthen Proposition \ref{prop-ob-ricci} to show that in fact the ancient solution with $B^\perp\ge 0$ has nonnegative bisectional curvature. This result is needed in extending a recent result of \cite{BCW} to K\"ahler manifolds with negative lower bound of $B^\perp$. We start with a lemma.

\begin{lemma}
Let $(M, g(t))_{t\in (-\infty, 0)}$ be a nonflat ancient solution of the K\"ahler-Ricci flow with $B^\perp\ge 0$. Let $u(x,t)$ be the function defined by
\begin{equation}\label{inf bisec}
   u(x,t)=\inf \{ \Rm_{(x,t)}(v,\bar{v}) | \mbox{ } v \in \Sigma \},
\end{equation}
where $\Sigma=\{v \in \mathfrak{gl}(n,\mathbb{C}) | \mbox{rank }(v)=1, \mbox{ and eigenvalues of norm }\le 1 \}$.
Then the function $\mu(x,t)=\min \{u(x,t), 0 \}$ satifies the partial differential inequality
\begin{equation}\label{mini bisec equ}
    \frac{\partial \mu}{\partial t}  -\Delta \mu \geq \mu^2
\end{equation}
in the barrier or viscosity sense.
\end{lemma}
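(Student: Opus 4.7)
The plan is to mirror the argument that yielded \eqref{eq:53}, but applied to the minimum of $\Rm(v,\bar v)$ over $v \in \Sigma$ rather than just to the minimum Ricci eigenvalue. Fix $(x_0, t_0)$ with $\mu(x_0, t_0) < 0$. Then the infimum defining $u(x_0, t_0)$ is attained by a rank-one element $v_0 \in \Sigma$, which I write as $v_0 = \xi_0 \otimes \eta_0^*$ with $\xi_0 \in T'_{x_0}M$ and $\eta_0^* \in (T'_{x_0}M)^*$. To build a local upper barrier, I parallel-transport $\xi_0$ and $\eta_0^*$ along radial geodesics from $x_0$ to obtain time-independent sections $\tilde\xi, \tilde\eta^*$, and set $\tilde v(x) := \tilde\xi(x) \otimes \tilde\eta^*(x)$. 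Because the K\"ahler Levi-Civita connection preserves the splitting $T_{\mathbb{C}}M = T'M \oplus T''M$ and acts by unitary isomorphisms, parallel transport preserves both the rank of $v$ and its spectrum, so $\tilde v(x) \in \Sigma$ near $x_0$. Consequently $F(x,t) := \Rm_{(x,t)}(\tilde v(x), \overline{\tilde v(x)})$ is smooth, dominates $u(x,t)$ near $(x_0, t_0)$, and agrees with $\mu$ there, making it an admissible test function in the viscosity sense.

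Differentiating $F$ using \eqref{eq:51}, together with $\nabla \tilde v(x_0) = 0$ and the $t$-independence of $\tilde v$, reduces the computation at $(x_0, t_0)$ to evaluating the reaction quadratic $Q$ of the curvature evolution at the rank-one tensor $v_0$. The main step---and the one I expect to be the principal obstacle---is the algebraic inequality
\begin{equation*}
Q(v_0, \bar v_0) \ge \mu(x_0, t_0)^2.
\end{equation*}
Its verification should combine three ingredients: (i) the first- and second-order optimality conditions for $v_0$ in $\Sigma$, which restrict the algebraic form of $v_0$ and annihilate certain directional derivatives of $\Rm$; (ii) Wilking-type Lie-algebraic identities decomposing $Q(v_0, \bar v_0)$ into $B^{\perp}$-, holomorphic sectional-, and Ricci-type pieces; and (iii) the sign information $\Ric \ge 0$ supplied by Proposition \ref{prop-ob-ricci} on the ancient solution, which controls the potentially negative contributions to $Q$. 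The quadratic $\mu^2$ on the right-hand side arises from the diagonal self-coupling term in $Q$, which effectively squares the minimizing value $\Rm(v_0, \bar v_0) = \mu$.

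With the pointwise reaction estimate in hand, the viscosity inequality $(\partial_t - \Delta)\mu \ge \mu^2$ is then immediate: at points where $u > 0$ one has $\mu \equiv 0$ and the inequality is trivial, while at points where $u \le 0$ one has $\mu = u$ and the barrier $F$ supplies the required comparison. The resulting differential inequality is of precisely the form to which the Perelman-style cut-off and localization scheme of Section 5 applies (cf.~the derivation leading to \eqref{eq:58}), so that the same parabolic maximum principle argument will propagate $\mu \ge 0$ globally, yielding the nonnegativity of bisectional curvature on the ancient flow that this lemma is designed to provide.
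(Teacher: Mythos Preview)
Your outline has the right architecture (barrier/viscosity reduction, then a pointwise reaction estimate), but there are two genuine gaps relative to the paper's argument.

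First, you assume without justification that the infimum over $\Sigma$ is attained. This is not automatic: $\Sigma$ is \emph{not} compact, since a rank-one $v = x\otimes \bar y$ with $\langle x,\bar y\rangle$ bounded can still have $|v|\to\infty$. The paper handles this by perturbing to $B^\perp>0$ and arguing that along a minimizing sequence with $|v_i|\to\infty$, the normalized limit $w=\lim v_i/|v_i|$ would be rank-one nilpotent (hence $w^2=0$), forcing $\Rm(w,\bar w)>0$ by Wilking's characterization of the $B^\perp>0$ cone---a contradiction. Without this step your barrier construction has no minimizer to start from.

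Second, your sketch of the reaction inequality $Q(v_0,\bar v_0)\ge\mu^2$ invokes the wrong ingredients. You do \emph{not} need $\Ric\ge 0$ from Proposition~\ref{prop-ob-ricci}, and there is no decomposition of $Q$ into ``$B^\perp$-, holomorphic sectional-, and Ricci-type pieces.'' The correct mechanism is the standard splitting $Q=\Rm^2+\Rm^{\#}$, where Wilking's Lie-algebraic result gives $\Rm^{\#}(v_0,\bar v_0)\ge 0$ directly from $B^\perp\ge 0$ for rank-one $v_0$. The $\mu^2$ then comes entirely from the $\Rm^2$ term, via a specific decomposition of the minimizer: writing $v_0=u+w$ where $u$ is the ``diagonal'' piece along $E=x/|x|$ with $|u|\le 1$ (this is exactly the eigenvalue constraint from $\Sigma$) and $w$ the off-diagonal piece, the first variation at $s=1$ along the family $u+sw\in\Sigma$ yields $\mu=\Re\,\Rm(v_0,\bar u)$, whence $\Rm^2(v_0,\bar v_0)=|\Rm(v_0)|^2\ge |\Rm(v_0,\bar u)|^2\ge \mu^2$. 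Your items (i)--(iii) do not capture this $v=u+w$ decomposition or the role of the eigenvalue bound $|u|\le 1$, which is the actual heart of the estimate.
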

\begin{proof}
By perturbation we may assume that $\Rm$ has $B^\perp>0$. By \cite{Wilking}, we know that $u(x,t) \geq 0$ if and only if $\Rm(x,t)$ has nonnegative bisectional curvature.
So it suffices to consider the case $\mu(x,t)<0$ as the inequality \eqref{mini bisec equ} follows from the proof of the fact that nonnegative bisectional curvature is preserved by K\"ahler-Ricci flow when $\mu(x,t)=0$.
In the rest of the proof, we fix a spacetime point $(x,t)$ and $\Rm(x,t)$ is  abbreviated as $\Rm$.
We claim the infimum in \eqref{inf bisec} is attained and finite. There are two cases, the first is that $u$ is $-\infty$. Then we may have $v_i\in \Sigma$ such that $\lim_{i\to \infty} \Rm(v_i,\bar{v}_i)= -\infty$. From this it is clear that $|v_i|\to \infty$. Moreover $\Rm(\frac{v_i}{|v_i|}, \overline{\frac{v_i}{|v_i|}} )$ converges to say $-a$ for some finite $a>0$ by passing to a subsequence. On the other hand, since $\frac{v_i}{|v_i|}\to v_\infty$ by passing to a subsequence, with $v_\infty$ being nilpotent, we also have $\Rm(v_\infty, \bar{v}_\infty)>0$, a contradiction.

If $0>u>-\infty$, let $v_i \in \Sigma$ be a minimizing sequence such that $\lim_{i\to \infty} \Rm(v_i,\bar{v}_i) =u(x,t)$. If $|v_i|$ remains bounded, then by compactness, we can pass to a subsequential limit $v_{\infty}$ with $\Rm(v_{\infty}, \bar{v}_{\infty}) =u(x,t)$. In case $|v_i| \to \infty$, let $w$ be a subsequential limit of the sequence $v_i/|v_i|$. Then $\Rm(w,\bar{w})=0$.  At the mean time $w$ has rank 1 and eigenvalues all zero. Thus $w^2=0$ and $\Rm(w,\bar{w})> 0$ by \cite{Wilking} in view of $B^{\perp} > 0$. The contradiction shows that the case $|v_i|\to \infty$ does not occur.

Let $v\in \Sigma$ be the matrix such that $u(x,t)=\Rm(v,\bar{v})$.
Now assume that $v=x\otimes \bar{y}$. Since the scaling $x \to \lambda x$ and $y \to \lambda^{-1}y$ for some $\lambda>0$ does not change $x\otimes y$, we may assume that $x$ has the norm of $|\langle x, \bar{y}\rangle|$, namely the norm of the eigenvalue of $v$. Now let $E=\frac{x}{|x|}$ be the unitary vector of $x$ direction and write $y= aE+y^{\perp}$, with $y^\perp\in \{E\}^\perp$. It is easy to see that $|x|\bar{a}=\langle x, \bar{y}\rangle$. Hence $a=e^{-\sqrt{-1}\theta}$ with $\theta$ being the argument of $\langle x, \bar{y}\rangle$. Write  $v=u+w=x\otimes \bar{a}\bar{E}+x\otimes \overline{y^{\perp}}$. Then $|u|=|\langle x, \bar{y}\rangle|$, the norm of the eigenvalue of $v$. Moreover $u+sw$ has rank one and has the eigenvalue $\bar{a} |x|$, which has the norm of $|u|$.

As in \cite{BCW}, the first variation gives that
$$
\Rm(u, \bar{w})+\Rm(w, \bar{u})+2\Rm (w, \bar{w})=0.
$$
This then implies that
$\mu(x,t)=\Rm(u, \bar{u})+\frac{1}{2}\Rm(u, \bar{w})+\frac{1}{2}\Rm(w, \bar{u})=\Re\left(\Rm(v, \bar{u})\right)$. Using that   $|u|\le 1$, we have
$$
\Rm^2(v, \bar{v})=\langle \Rm(v), \overline{\Rm(v)}\rangle =|\Rm(v)|^2 \ge |\Rm(v, \bar{u})|^2\ge \mu(x,t)^2.
$$
By \cite{Wilking} we also have $\Rm^{\#}(v, \bar{v})\ge0$. Hence we
have proved that
$ \frac{\partial \mu}{\partial t}  -\Delta \mu \geq \mu^2$ in the barrier sense.
\end{proof}

\begin{proposition}\label{prop:61}
Let $(M, g(t))_{t\in (-\infty, 0)}$ be a nonflat ancient solution of the K\"ahler-Ricci flow with $B^\perp\ge 0$. Then it has nonnegative bisectional curvature. Furthermore, if the curvature is bounded, then the volume growth is non-Euclidean, namely the asymptotic volume ratio
$\mathcal{V}(M, g(t))=0$.
\end{proposition}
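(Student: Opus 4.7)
The plan is to establish the two assertions in turn. The first, pointwise nonnegativity of bisectional curvature, follows by the same cut-off technique used in Section 5 applied to the differential inequality just proved for $\mu$. The second, vanishing of the asymptotic volume ratio, I would obtain by extracting an asymptotic shrinking soliton and invoking Theorem \ref{thm:22}.

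\emph{Part 1.} Assume for contradiction that $\mu(x_0, t_0) < 0$; without loss of generality $t_0 = 0$ and pass to $\tau := -t$. Since Proposition \ref{prop-ob-ricci} already gives $\Ric \ge 0$ and the curvature is locally bounded in spacetime, one can pick $r_0 > 0$ with $\Ric \le (2n-1)/r_0^2$ on $\overline{B_\tau(x_0, r_0)}\times[0, T_0]$, where $T_0 := 8/|\mu(x_0,0)|$, and $B$ with $B^2 r_0^2 \ge 2A/|\mu(x_0,0)|$. Set
$$
\psi(x,\tau) := \eta\!\left(\frac{d_\tau(x,x_0) - \tfrac{5}{3}(2n-1)\tau/r_0}{Br_0}\right), \qquad Q := \psi\mu.
$$
At the spatial minimum $x_\tau$ of $Q(\cdot,\tau)$, applying Perelman's distance distortion estimate (Lemma \ref{perelman}) to $(\partial_\tau+\Delta)\psi$ and combining with the barrier inequality $(\partial_\tau+\Delta)\mu \le -\mu^2$ from the preceding lemma, one obtains, exactly as in the proof of Proposition \ref{prop-ob-ricci},
$$
\tfrac{d^+}{d\tau} Q(\tau) \le -\tfrac{1}{4} Q(\tau)^2.
$$
Integration forces $Q(\tau)\to-\infty$ before $\tau = T_0$, contradicting the local boundedness of curvature. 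Hence $\mu\equiv 0$ and $\Rm$ has nonnegative bisectional curvature.

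\emph{Part 2.} Now assume the curvature is bounded and, for contradiction, $\mathcal{V}(M, g(t_0)) > 0$. Combined with $\Ric\ge 0$, maximal volume growth supplies uniform $\kappa$-noncollapsing on all scales via Perelman's no-local-collapsing. Perelman's reduced-volume construction then produces an asymptotic shrinking K\"ahler-Ricci soliton $(M_\infty, g_\infty, f_\infty)$ as a Cheeger-Gromov limit of rescalings $g_i(t) = \tau_i^{-1}g(t_0 + \tau_i t)$, $\tau_i\to\infty$. By invariance of $B^\perp\ge 0$ under the K\"ahler-Ricci flow and under Cheeger-Gromov limits, the soliton inherits $B^\perp\ge 0$, and $\mathcal{V}$ is preserved in the limit, giving $\mathcal{V}(M_\infty, g_\infty) > 0$. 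By Theorem \ref{thm:22}, the universal cover of $M_\infty$ splits as $N_1\times\cdots\times N_l\times \mathbb{C}^k$ with each $N_j$ a compact irreducible Hermitian symmetric space. A direct volume comparison shows any such product has $\mathcal{V}=0$ unless $l=0$, in which case $M_\infty$ is the flat Gaussian shrinker on $\mathbb{C}^n$. But this contradicts Perelman's strict inequality for the reduced volume of any nonflat $\kappa$-noncollapsed ancient solution, closing the argument.

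The hard step will be Part 2, specifically extracting the asymptotic shrinker and verifying that it inherits $B^\perp\ge 0$ together with the positive asymptotic volume ratio. The first is a routine adaptation of Perelman's reduced-volume analysis from the Ricci to the K\"ahler-Ricci setting (the construction only uses $\kappa$-noncollapsing and bounded curvature), while the second relies on invariance of $B^\perp\ge 0$ under the flow and the fact that $\mathcal{V}$ is preserved in Cheeger-Gromov limits of manifolds with uniformly bounded curvature and $\Ric\ge 0$. Once these are in hand, Theorem \ref{thm:22} does the rest.
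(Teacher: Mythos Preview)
Your Part 1 matches the paper's proof exactly: both invoke the localization argument of Proposition \ref{prop-ob-ricci} applied to the barrier inequality for $\mu$ established in the preceding lemma.

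For Part 2 the paper takes a much shorter route: once Part 1 gives nonnegative bisectional curvature, the conclusion $\mathcal{V}(M,g(t))=0$ is an immediate citation of Theorem 2 of \cite{Ni-MRL}, which already asserts that any ancient solution of the K\"ahler-Ricci flow with bounded nonnegative bisectional curvature has vanishing asymptotic volume ratio. Your argument instead extracts an asymptotic shrinker and feeds it into Theorem \ref{thm:22}; this is essentially a self-contained reproof of that cited theorem using the soliton classification developed in this paper. The approach is sound (modulo the standard facts you flag: the K\"ahler adaptation of Perelman's Proposition 11.2, already carried out in \cite{Ni-MRL}, and the semicontinuity $\mathcal{V}(M_\infty)\ge \limsup \mathcal{V}(M_i)$ under pointed Cheeger--Gromov convergence with $\Ric\ge 0$ via Bishop--Gromov). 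The final contradiction is really the rigidity in the equality case of reduced-volume monotonicity rather than a ``strict inequality'': if the asymptotic shrinker were the Gaussian, then $\tilde V(\tau)\equiv 1$ and the original solution is itself flat. So your route works and has the virtue of being internal to the paper, but the paper's one-line citation is the efficient option since the heavy lifting was done in \cite{Ni-MRL}.
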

\begin{proof}
By the same argument as in the proof of Proposition \ref{prop-ob-ricci}, we obtain that $u(x,t) \geq 0$ on $M\times (-\infty, 0)$. This proves that $(M, g(t))_{t\in (-\infty, 0)}$ has nonnegative bisectional curvature. The second statement now follows from Theorem 2 of  \cite{Ni-MRL}.
\end{proof}
Note that the above result generalizes Theorem 2 of \cite{Ni-MRL}.
The same argument shows that Lemma 4.2 in \cite{BCW} holds without the bounded curvature assumption.
\begin{proposition}\label{prop:62}
Let $(M, g(t))_{t\in (-\infty, 0)}$ be a nonflat ancient solution of the Ricci flow with weakly PIC$\mbox{}_1$. Then it has nonnegative complex sectional curvature.
\end{proposition}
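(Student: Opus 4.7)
My plan is to adapt the scheme used for Proposition \ref{prop:61} to the Riemannian setting, replacing bisectional curvature by complex sectional curvature and $B^\perp\ge0$ by weakly PIC$\mbox{}_1$. The two essential ingredients are an algebraic reaction inequality (provided by Bamler--Cabezas-Rivas--Wilking) and the PDE localization technique already developed in Section 5.

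First, following \cite{BCW}, I would introduce the infimum function
$$ u(x,t)=\inf\{R_{(x,t)}(v,w,\bar v,\bar w):(v,w)\in\Sigma\}, $$
where $\Sigma$ is a suitable compact set of pairs of complex tangent vectors whose characterization via $R(v,w,\bar v,\bar w)\ge 0$ is equivalent to the nonnegativity of complex sectional curvature (and whose boundary locus cuts out the weak PIC$\mbox{}_1$ cone). The correct set $\Sigma$ and the compactness of the minimizer should be extracted from the proof of [BCW, Lemma 4.2]. Set $\mu(x,t)=\min\{u(x,t),0\}$; then $\mu\equiv 0$ is equivalent to the desired conclusion.

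Second, I would prove the partial differential inequality
$$\left(\frac{\partial}{\partial t}-\Delta\right)\mu\ge\mu^2$$
in the barrier/viscosity sense. The argument mirrors the one used for Proposition \ref{prop:61}: when $\mu(x,t)<0$ the infimum is attained at some $(v_0,w_0)\in\Sigma$ (the compactness is the analog of the argument ruling out $|v_i|\to\infty$ there); the first variation of the functional $R(v,w,\bar v,\bar w)$ at the minimizer yields an algebraic identity for $\mu$ in terms of $R$ contracted with $(v_0,w_0)$; finally, inserting this identity into Hamilton's reaction term $R^2+R^\#$ produces the clean quadratic lower bound $\mu^2$, using precisely the preservation of the weak PIC$\mbox{}_1$ cone along $\partial_tR=R^2+R^\#$ guaranteed by Brendle--Schoen / \cite{BCW}.

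Third, assuming $\mu(x_0,t_0)<0$ for contradiction, I would run verbatim the localization argument from the proof of Proposition \ref{prop-ob-ricci}. Choose $T_0\ge 8|\mu|^{-1}(x_0,t_0)$ and $r_0$ small enough so that $\Ric\le(n-1)/r_0^2$ on $B_\tau(x_0,r_0)$ for $\tau\in[0,T_0]$; choose $B$ with $B^2\ge 2A/(|\mu|(x_0,0)\,r_0^2)$; and let $\psi=\eta\bigl((d_\tau(x,x_0)-\tfrac{5}{3}(n-1)\tau/r_0)/(Br_0)\bigr)$. Setting $Q=\psi\mu$ and invoking Perelman's distance-Laplacian estimate (Lemma \ref{perelman}) together with the cutoff inequality $\eta''-2(\eta')^2/\eta\ge-A\sqrt{\eta}$, the same computation delivers $\frac{d^+}{d\tau}Q(\tau)\le-\tfrac{1}{4}Q^2(\tau)$, whence $Q(\tau)\to-\infty$ in finite time, which is absurd since $Q$ is bounded on each time slice. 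Hence $\mu\equiv 0$.

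The main obstacle will be the algebraic step, i.e., ensuring the clean $\mu^2$ reaction bound at the minimizing $(v_0,w_0)$; in the bisectional case of Proposition \ref{prop:61} this hinged on decomposing the rank-one minimizer into an eigenvalue-preserving component plus an orthogonal nilpotent component, and the analog here requires the corresponding boundary structure of the PIC$\mbox{}_1$ cone from Wilking's framework as organized in \cite{BCW}. Once this algebraic input is in place, the localization argument of Section 5 eliminates the curvature boundedness hypothesis.
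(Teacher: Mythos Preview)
Your proposal is correct and matches the paper's approach exactly. The paper's own ``proof'' of Proposition \ref{prop:62} is a single sentence preceding its statement---``The same argument shows that Lemma 4.2 in \cite{BCW} holds without the bounded curvature assumption''---and you have correctly unpacked this: the algebraic reaction inequality $(\partial_t-\Delta)\mu\ge\mu^2$ for the minimum complex sectional curvature is precisely the content of \cite[Lemma 4.2]{BCW}, and the localization via Perelman's distance estimate (Lemma \ref{perelman}) and the cutoff $\psi$ is exactly the argument of Section 5 that removes the bounded-curvature hypothesis. One small point of phrasing: in Wilking's framework the set $\Sigma$ is more naturally described as rank-$\le 2$ elements of $\mathfrak{so}(n,\mathbb{C})$ with eigenvalue norms $\le 1$ rather than ``pairs of complex tangent vectors,'' but this is cosmetic since a complex $2$-plane is equivalently encoded by $v\wedge w$.
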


Applying the argument of \cite{BCW}, in view of the above Proposition \ref{prop:61} we have the following result as the corollary.

\begin{theorem}\label{thm:61}
 For any $n\ge 2, \ne 3$ and $\nu_0$, there exist positive constants $C=C(n, \nu_0)$ and $\tau=\tau(n, \nu_0)$ such that if $(M, g)$ is an $n$-dimensional K\"ahler manifold with bounded curvature, and
 $$Vol_g(B_g(p, 1))\ge \nu_0, \forall p\in M,$$
 and $\Rm+\epsilon \operatorname{id}$ has NOB for some $\epsilon \in [0, 1]$, then K\"ahler-Ricci flow exists on $[0, \tau]$ with $\Rm_{g(t)}+C \epsilon \operatorname{id}$ has NOB and $|\Rm|\le \frac{C}{t}$ for all $t\in (0, \tau]$.
\end{theorem}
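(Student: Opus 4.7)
The plan is to mirror the contradiction-and-rescaling strategy of Bamler, Cabezas-Rivas, and Wilking \cite{BCW}, with Proposition \ref{prop:61} supplying the K\"ahler replacement for their rigidity statement on ancient solutions in an invariant cone. Suppose for contradiction that the theorem fails for some fixed $n\neq 3$ and $\nu_0>0$. Then one can extract, for each $i$, a K\"ahler $n$-manifold $(M_i,g_i)$ with bounded curvature, $\operatorname{Vol}_{g_i}(B_{g_i}(p,1))\ge\nu_0$ for every $p$, and some $\epsilon_i\in[0,1]$ with $\Rm_{g_i}+\epsilon_i\operatorname{id}$ having NOB, but for which the K\"ahler-Ricci flow $g_i(t)$ starting from $g_i$ (which exists short-time by Shi) violates the desired conclusion with constants $C_i\to\infty$ and $\tau_i\to 0$.

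The next step is a point-picking and parabolic rescaling. Following \cite{BCW}, I would choose, for each $i$, a first space-time point $(x_i,t_i)$ where either the curvature bound $|\Rm_{g_i(t)}|\le C_i/t$ or the almost-NOB condition ``$\Rm_{g_i(t)}+C_i\epsilon_i\operatorname{id}$ has NOB'' just fails, and set the scaling factor $Q_i$ equal to $|\Rm|_{g_i(t_i)}(x_i)$, or to the size of the offending negative NOB eigenvalue divided by $C_i\epsilon_i$, whichever dictates the blow-up. Parabolically rescale $\tilde g_i(\cdot):= Q_i\, g_i(t_i+Q_i^{-1}\cdot)$; because $(x_i,t_i)$ is a first failure time, one obtains uniform curvature bounds on a backward interval of length tending to $+\infty$, while the rescaled defect $\tilde\epsilon_i:=\epsilon_i/Q_i$ necessarily tends to $0$ (otherwise $C_i$ could not be driven to infinity). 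Using Perelman's $\kappa$-non-collapsing inherited from the initial volume lower bound to control the injectivity radius on the rescaled scale $Q_i^{-1/2}\to 0$, Hamilton's compactness theorem then produces a subsequential pointed smooth limit $(\tilde M_\infty, \tilde g_\infty(t), x_\infty)_{t\in(-\infty,0]}$, a nonflat complete ancient solution to the K\"ahler-Ricci flow with bounded curvature and exact $B^\perp\ge 0$.

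Proposition \ref{prop:61} now applies to $\tilde g_\infty$, yielding both nonnegative bisectional curvature and asymptotic volume ratio $\mathcal{V}(\tilde M_\infty,\tilde g_\infty(0))=0$. On the other hand, the non-collapsing on the pre-rescaled side, transmitted through $\kappa$-non-collapsing, gives a uniform lower bound on the volume of unit balls in $\tilde g_\infty(0)$, and hence a positive lower bound on $\mathcal{V}(\tilde M_\infty,\tilde g_\infty(0))$. This contradiction finishes the proof.

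The main obstacle I expect is the point-picking: the conclusion to be contradicted is a conjunction of two breakdown modes (curvature blowing up versus NOB defect growing faster than $\epsilon_i$), and the picking scheme must be engineered so that, regardless of which mode triggers the failure, the rescaling produces both a nontrivial geometric object at $(x_\infty,0)$ and a residual defect $\tilde\epsilon_i\to 0$. The construction in \cite{BCW} solves precisely this in the Riemannian weakly PIC$\mbox{}_1$ setting; porting it to NOB on K\"ahler manifolds is mostly a matter of reformulating the defect in terms of the minimum NOB eigenvalue and using that the NOB cone is closed and invariant under the K\"ahler-Ricci flow, so that the parabolic limit indeed lies in the cone to which Proposition \ref{prop:61} can be applied.
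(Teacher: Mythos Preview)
Your overall framework is right: the paper also reduces to the BCW contradiction--and--rescaling scheme together with Proposition~\ref{prop:61}. But you have glossed over the one step that is genuinely new here and that occupies essentially the entire proof in the paper: the evolution inequality
\[
\left(\frac{\partial}{\partial t} - \Delta\right)\ell \;\le\; \Scal\,\ell + C(n)\,\ell^2
\]
for the NOB defect function $\ell(x,t) = \inf\{\alpha : \Rm + \alpha\,\operatorname{id} \text{ has NOB at } (x,t)\}$. In \cite{BCW} the analogous inequality (their Lemmas 2.2--2.3) is what drives the doubling--time estimate of their Section~3, which in turn is what makes the point--picking in Section~4 produce a rescaled defect $\tilde\epsilon_i\to 0$. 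Without it you have no a~priori control on how $\ell$ grows along the flow when $\ell>0$: invariance of the NOB cone only tells you that $\ell\le 0$ is preserved, not that small positive $\ell$ stays small relative to the curvature scale. Your sentence ``mostly a matter of reformulating the defect'' is exactly where the real work lies.

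The paper's derivation of this inequality is a page of curvature computation at a minimizing orthonormal pair $\{E_1,E_2\}$: one splits the reaction term in the evolution of $R_{1\bar 1 2\bar 2}$ into three pieces $I+II+III$, uses first--variation identities ($R_{j\bar 1 2\bar 2}=R_{j\bar 2 1\bar 1}=0$ for $j\ge 3$, and $R_{1\bar 2 2\bar 2}=R_{1\bar 1 1\bar 2}$) together with the second--variation inequality of Gu--Zhang to show $II,III\ge 0$, and then bounds $R_{1\bar 1 1\bar 1}+R_{2\bar 2 2\bar 2}$ in terms of $\Scal$ and $\ell$ to estimate $I$. This last step is precisely where the restriction $n\neq 3$ enters --- a point your proposal does not account for at all.
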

\begin{proof} As in Section 2.3 of \cite{BCW} define $\ell(x, t)$ as
$$
\ell(x, t):=\inf\{\alpha | (\Rm +\alpha \operatorname{id})(X\wedge\bar{Y}, \overline{X\wedge\bar{Y}})\ge 0,\, \forall X, Y\in T_x'M,  |X|=|Y|=1, \langle X, \bar{Y}\rangle =0\}.
$$
Here $\operatorname{id}$ is the curvature operator of $\mathbb{P}^n$, namely the one corresponding to $g_{i\bar{j}}g_{k\bar{l}}+g_{i\bar{l}}g_{k\bar{j}}$. In view of  the proof of Theorem 1 of \cite{BCW}, particularly Sections 3 and 4,  to prove the theorem, given Proposition \ref{prop:61} it suffices to show that
\begin{equation}\label{eq:6-key}
\left(\frac{\partial}{\partial t}  -\Delta\right)\ell \le \Scal \ell +C\ell^2.
\end{equation}
Here $C=C(n)$ is a dimensional constant. It is easy to see that if $\ell(x, t)>0$, then
$$
-\ell(x, t)=\inf\{ R_{X\bar{X}Y\bar{Y}}\, |\, \forall X, Y\in T_x'M,  |X|=|Y|=1, \langle X, \bar{Y}\rangle =0\}.
$$
Hence we can apply a similar computation as above to this setting. Pick a unitary frame $\{E_i\}$ such that $E_1=X$ and $E_2=Y$.  By (\ref{eq:51}) we have that
\begin{eqnarray*}
\left(\frac{\partial}{\partial t}  -\Delta\right) (-\ell)&=&\sum_{p, q=1}^2R_{1\bar{1}q\bar{p}}R_{p\bar{q}2\bar{2}}+|R_{1\bar{2}q\bar{p}}|^2-|R_{1\bar{p}2\bar{q}}|^2\\
&\,&+ \left(\sum_{p=1, 2; q\ge 3}+\sum_{q=1,2; p\ge 3}\right)
\left(R_{1\bar{1}q\bar{p}}R_{p\bar{q}2\bar{2}}+|R_{1\bar{2}q\bar{p}}|^2-|R_{1\bar{p}2\bar{q}}|^2\right)\\
&\,& + \sum_{p, q\ge 3}\left(R_{1\bar{1}q\bar{p}}R_{p\bar{q}2\bar{2}}+|R_{1\bar{2}q\bar{p}}|^2-|R_{1\bar{p}2\bar{q}}|^2\right).
\end{eqnarray*}
For the last term on the right above, the second variational consideration based on the fact that $R_{1\bar{1}2\bar{2}}$ attains the minimum of $B^\perp$ among all orthonormal two frame $\{X, Y\}$ (as in \cite{GuZhang}) shows that $\sum_{p, q\ge 3}R_{1\bar{1}q\bar{p}}R_{p\bar{q}2\bar{2}}-|R_{1\bar{p}2\bar{q}}|^2\ge 0$. Thus
$$
III=\sum_{p, q\ge 3}\left(R_{1\bar{1}q\bar{p}}R_{p\bar{q}2\bar{2}}+|R_{1\bar{2}q\bar{p}}|^2-|R_{1\bar{p}2\bar{q}}|^2\right) \ge \sum_{p, q\ge 3}|R_{1\bar{2}q\bar{p}}|^2 \ge 0.
$$
The second last term can be written as
\begin{eqnarray*}
II&=&\sum_{j\ge 3}R_{1\bar{1}1\bar{j}}R_{2\bar{2}j\bar{1}}+|R_{1\bar{2}1\bar{j}}|^2-|R_{1\bar{1}2\bar{j}}|^2+R_{1\bar{1}2\bar{j}}R_{2\bar{2}j\bar{2}}+|R_{1\bar{2}2\bar{j}}|^2-|R_{1\bar{2}2\bar{j}}|^2\\
&\,&+\sum_{j\ge 3}R_{1\bar{1}j\bar{1}}R_{2\bar{2}1\bar{j}}+|R_{1\bar{2}j\bar{1}}|^2-|R_{1\bar{j}2\bar{1}}|^2+R_{1\bar{1}j\bar{2}}R_{2\bar{2}2\bar{j}}+|R_{1\bar{2}j\bar{2}}|^2-|R_{1\bar{j}2\bar{2}}|^2.
\end{eqnarray*}
By considering the first variation of $f(\theta)=R(\cos \theta E_1+\sin\theta E_j, \overline{\cos \theta E_1+\sin\theta E_j}, E_2, \bar{E}_2)$ with the fact that $f(0)$ attains the minimum we have
$\Re R_{j\bar{1}2\bar{2}}=0$. Replacing $E_j$ by $\sqrt{-1} E_j$ we also have $\Im R_{j\bar{1}2\bar{2}} =0$. Hence $R_{j\bar{1}2\bar{2}}=0$. Similarly $R_{j\bar{2}1\bar{1}}=0$. Using these equations and symmetries of the curvature we have that
$$
II=\sum_{j\ge 3} |R_{1\bar{2}1\bar{j}}|^2+|R_{1\bar{2}j\bar{2}}|^2\ge 0.
$$
Applying a similar first variational consideration we also have $R_{1\bar{2}2\bar{2}}=R_{1\bar{1}1\bar{2}}$. Using this equation in the first sum of the right hand side of the equation for $\left(\frac{\partial}{\partial t}  -\Delta\right) (-\ell)$
\begin{eqnarray*}
I&=&R_{1\bar{1}1\bar{1}}R_{2\bar{2}1\bar{1}}+R_{1\bar{1}2\bar{2}}R_{2\bar{2}2\bar{2}}+2|R_{1\bar{1}1\bar{2}}|^2+|R_{1\bar{2}1\bar{2}}|^2 -|R_{1\bar{1}2\bar{2}}|^2\\
&\ge & R_{1\bar{1}2\bar{2}}\left(R_{1\bar{1}1\bar{1}}+R_{2\bar{2}2\bar{2}}-R_{1\bar{1}2\bar{2}}\right)\\
&=& R_{1\bar{1}2\bar{2}}\Scal -3 (R_{1\bar{1}2\bar{2}})^2 -R_{1\bar{1}2\bar{2}}\left(\sum_{j\ge 3} 2(R_{1\bar{1}j\bar{j}}+R_{2\bar{2}j\bar{j}})+\sum_{i, j\ge 3} R_{i\bar{i}j\bar{j}}\right).
\end{eqnarray*}
To get our estimate we only need to estimate $\sum R_{i\bar{i}i\bar{i}}$ from below. For $i\ne j$, let $E'_i=\frac{1}{\sqrt{2}}(E_i-E_j)$ and $E''_i=\frac{1}{\sqrt{2}}(E_i-E_j)$.  We have that
$$
4R_{E'\bar{E}'E''\bar{E}''}=R_{i\bar{i}i\bar{i}}+R_{j\bar{j}j\bar{j}}-R_{i\bar{j}i\bar{j}}-R_{j\bar{i}j\bar{i}}\ge 4 R_{1\bar{1}2\bar{2}}.
$$
Replacing $E_j$ by $\sqrt{-1}E_j$ we can get rid of the last two terms on the left hand side of  the above inequality and obtain that
$$
R_{i\bar{i}i\bar{i}}+R_{j\bar{j}j\bar{j}}\ge 4R_{1\bar{1}2\bar{2}}, \quad \mbox{ hence } \sum_{i\ge 3}R_{i\bar{i}i\bar{i}}\ge 2(n-2)R_{1\bar{1}2\bar{2}}.
$$
This implies the estimate (\ref{eq:6-key}) for $n\ge 4$.
\end{proof}

An alternative approach for $n\neq 3$ in the last part  of  argument for estimating $I$ (following Lemma 2.3 in \cite{BCW}) is as follows. Apply instead  the following two estimates :
\begin{eqnarray*}
R_{1\bar11\bar1}+R_{2\bar22\bar2}
&=&R_{1\bar1}+R_{2\bar2} -2R_{1\bar12\bar2} -\sum_{\a \ge 3}(R_{1\bar1\a \bar\a}+R_{2\bar2\a\bar\a} )\\
& \le & R_{1\bar1}+R_{2\bar2} -2(n-1)R_{1\bar12\bar2}; \\
R_{1\bar1}+R_{2\bar2} &\leq& \Scal -(n-2)(n+1)R_{1\bar12\bar2}, \text{  if } n\neq 3.
\end{eqnarray*}
The first one above is trivial. For the second one,  recall that $R^*=R+\ell \mbox{ } \tilde\I$ has NOB and $R^*_{1\bar12\bar2}=0$. Since NOB implies two-nonnegative Ricci (algebraically), we have for $n\neq 3$,
$R^*_{1\bar1}+R^*_{2\bar2} \le \Scal(R^*)$.
It then follows that
$$R_{1\bar1}+(n+1)\ell +R_{2\bar2} +(n+1)\ell  \le \Scal +n(n+1) \ell.$$

\section{Closed type-I ancient solutions}
In this section, we prove some classification results on closed Type I $\kappa$-noncollapsed ancient solutions, as consequences of the classification of shrinkers achieved in previous sections.
Recall that an ancient solution $(M, g(t))$ to the Ricci flow defined on $M \times (-\infty, 0)$ is called of type-I if there exists a constant $A$ such that $$|\Rm|(x,t) \le \frac{A}{|t|}.$$

We first give a complete classification of compact $\kappa$-noncollapsed Type I ancient solutions to the Ricci flow with strictly/weakly PIC$\mbox{}_1$, generalizing the second author's work \cite{Ni-type I}.

\begin{theorem}
Assume that $(M^n,g(t))$ is a compact type I, $\kappa$-noncollapsed (for some $\kappa>0$) ancient solution to the Ricci flow with (strictly) PIC$\mbox{}_1$. Then $(M, g(t))$ must be a quotient of $\mathbb{S}^n$.
\end{theorem}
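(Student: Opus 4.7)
The plan is to combine the classification of weakly PIC$\mbox{}_1$ shrinkers from Theorem \ref{thm:31} with the asymptotic-shrinker theorem for Type I $\kappa$-noncollapsed ancient solutions (Naber; Enders--M\"uller--Topping), Brendle's convergence theorem for compact Ricci flows with PIC$\mbox{}_1$, and Perelman's $\mu$-entropy rigidity.

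First I would extract an asymptotic shrinker. Fix $t_i\to-\infty$ and consider the rescaled flows $\tilde g_i(s):=|t_i|^{-1}g(|t_i|s)$. The Type I bound gives $|\Rm_{\tilde g_i}|\le A/|s|$ uniformly in $i$, while strict PIC$\mbox{}_1$ yields $\Ric>0$ by Lemma \ref{lem:31}. Combined with $\kappa$-noncollapsing, Hamilton's compactness theorem and the Naber / Enders--M\"uller--Topping result produce a subsequential pointed limit $(M_\infty,g_\infty,f_\infty)$ that is a non-trivial gradient shrinking Ricci soliton; since $M$ is compact, standard Type I arguments (as in \cite{Ni-type I}) show $M_\infty$ is also compact and diffeomorphic to $M$. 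The limit inherits weakly PIC$\mbox{}_1$ because this condition defines a closed Ricci-flow invariant cone that is preserved under Cheeger--Gromov limits.

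Next I would classify the limit. By Theorem \ref{thm:31}(ii), the universal cover of $M_\infty$ splits as a product of compact irreducible Hermitian/Riemannian symmetric spaces (no Euclidean factor can appear since $M_\infty$ is compact and the Gaussian soliton on $\mathbb{R}^k$ admits no cocompact deck group). In parallel, Brendle's extinction theorem applied to the forward Ricci flow from any slice $g(-1)$ shows that $M$ is diffeomorphic to a spherical space form $\mathbb{S}^n/\Gamma$. Matching diffeomorphism types of $M_\infty$ and $M$, and noting that a product of compact irreducible symmetric spaces diffeomorphic to $\mathbb{S}^n/\Gamma$ must be $\mathbb{S}^n/\Gamma$ itself, $(M_\infty,g_\infty)$ is the standard shrinking $\mathbb{S}^n/\Gamma$.

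Finally I would close with entropy rigidity. Perelman's $\mu$-functional $\mu(g(t))$ is non-decreasing along the flow and attains the value $\mu_\Gamma:=\mu(\mathbb{S}^n/\Gamma,g_{\mathrm{round}})$ both as $t\to-\infty$ (from the identified shrinker limit) and as $t\to 0^-$ (from Brendle's convergence). Monotonicity forces $\mu(g(t))\equiv\mu_\Gamma$ on $(-\infty,0)$, and Perelman's rigidity then identifies $(M,g(t))$ as a gradient shrinking soliton itself; applying Theorem \ref{thm:31}(i) to this soliton yields the claim. The principal obstacle I anticipate is rigorously upgrading the asymptotic-shrinker extraction to a compact limit with the correct diffeomorphism type and matching the two limiting values of $\mu$; both steps rely essentially on Type I, $\kappa$-noncollapsing, and the strict PIC$\mbox{}_1$ hypothesis.
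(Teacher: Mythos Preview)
Your proposal is correct and follows essentially the same architecture as the paper's proof: extract an asymptotic shrinker as $t\to-\infty$, classify it via Theorem \ref{thm:31}, identify the forward limit as a spherical space form via Brendle, and close with Perelman's entropy rigidity. The only notable difference is that the paper first invokes Proposition \ref{prop:62} to upgrade weakly PIC$\mbox{}_1$ to nonnegative complex sectional curvature and then runs Perelman's Proposition 11.2 blow-down, whereas you appeal directly to the Naber / Enders--M\"uller--Topping asymptotic-shrinker theorem for Type I $\kappa$-noncollapsed solutions; both routes yield the compact limiting shrinker, and the remainder of the argument is the same.
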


\begin{proof}
We follow the argument in \cite{Ni-type I}.
Firstly, $(M,g(t))$ has nonnegative complex sectional curvature by Proposition \ref{prop:62}. This allows us to apply the blow-down procedure to $(M,g(t))$ as $t \to  -\infty$ using Proposition 11.2 of Perelman \cite{Perelman} and get an asymptotic shrinker $(M_\infty, g_\infty)$ with weakly PIC$\mbox{}_1$. Moreover, the same argument as Lemma 0.3 in \cite{Ni-type I} shows that
$(M_\infty, g_\infty)$ must be compact and topologically a quotient of $\mathbb{S}^n$, thus a metric quotient of $\mathbb{S}^n$ by Theorem \ref{thm:31}.
However, by \cite{Brendle}, we also have that $(M, g(t)) \to (M_\infty, g_\infty)$ as $t \to  0$. The fact that $(M,g(t))$ must be a shrinker follows from the equality case of the monotonicity of Perelman's entropy $\nu(M, g(t))$ as explained in \cite{Ni-type I}.
\end{proof}

The following corollary follows immediately from the strong maximum principle in \cite{BS2}.
\begin{corollary}
Assume that $(M^n,g(t))$ is a compact type I, $\kappa$-noncollapsed (for some $\kappa>0$) ancient solution to the Ricci flow with weakly PIC$\mbox{}_1$. Then $(M, g(t))$ must be quotients of products of symmetric spaces.
\end{corollary}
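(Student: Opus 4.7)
The plan is to combine the theorem just proved with the Brendle-Schoen strong maximum principle from \cite{BS2} and the classification results for shrinkers obtained earlier in this paper. First, by Proposition \ref{prop:62}, the ancient solution $(M,g(t))$ has nonnegative complex sectional curvature, and in particular is weakly PIC throughout its life. The Brendle-Schoen strong maximum principle, applied to a solution of the Ricci flow with weakly PIC$\mbox{}_1$, then provides the following trichotomy on the universal cover $\tilde M$: for $t>-\infty$ the de Rham decomposition $\tilde M = N_1\times\cdots\times N_k$ has each factor $N_i$ of one of three types, namely (a) strictly PIC$\mbox{}_1$ somewhere, (b) Kähler and weakly PIC$\mbox{}_1$, or (c) locally symmetric.

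Next I would check that the hypotheses descend to each factor: since compactness, $\kappa$-noncollapsing, the type-I bound $|\Rm|\le A/|t|$ and weakly PIC$\mbox{}_1$ are all stable under de Rham splitting, each $N_i$ is itself a compact, type-I, $\kappa$-noncollapsed ancient solution to the Ricci flow (respectively Kähler-Ricci flow in case (b)). For case (a) the theorem just proved identifies $N_i$ as a quotient of $\mathbb{S}^{n_i}$, which is a symmetric space; case (c) is already locally symmetric by definition, and compactness plus $\kappa$-noncollapsing forces it to be a compact symmetric space.

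The main work is in case (b). Here I would rerun the argument of the previous theorem in the Kähler category: apply the Perelman blow-down as $t\to -\infty$ to extract an asymptotic Kähler-Ricci shrinker $(N_{i,\infty},g_\infty)$, which is compact by the Lemma 0.3 type argument of \cite{Ni-type I} combined with $\kappa$-noncollapsing, and which inherits $B^\perp\ge 0$ since weakly PIC$\mbox{}_1$ on a Kähler manifold implies nonnegative orthogonal bisectional curvature. Then Theorem \ref{thm:22} forces $N_{i,\infty}$ to be a product of compact irreducible Hermitian symmetric spaces (the $\mathbb{C}^k$-factor being absent by compactness). The equality case of Perelman's entropy monotonicity, exactly as in the proof of the preceding theorem, upgrades $N_i$ itself to be its asymptotic shrinker, hence a product of compact Hermitian symmetric spaces.

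Assembling the three cases, $\tilde M$ is a product of compact irreducible symmetric (possibly Hermitian symmetric) spaces, and $M$ is a quotient thereof. The main obstacle I anticipate is verifying, in case (b), that the asymptotic shrinker is genuinely compact and that the entropy rigidity step applies unchanged in the Kähler-Ricci setting; once these are in place, the proof reduces to invoking Theorem \ref{thm:22} and the theorem for strict PIC$\mbox{}_1$.
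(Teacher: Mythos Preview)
Your proposal is correct and follows the same approach as the paper, whose entire proof is the single sentence ``follows immediately from the strong maximum principle in \cite{BS2}.'' You have simply unpacked what that line entails: the Brendle--Schoen structure theorem on the universal cover, the previous theorem for the strictly PIC$_1$ factors, and a separate treatment of the K\"ahler factors.

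Your handling of case~(b) via blow-down and entropy rigidity is valid but more elaborate than necessary; it essentially re-derives the K\"ahler theorem that appears two paragraphs later in the paper. A shorter route, closer in spirit to what the paper intends, is to note that an irreducible compact K\"ahler factor with weakly PIC$_1$ has nonnegative bisectional curvature (via Proposition~\ref{prop:62}), and then invoke the strong maximum principle for that condition (as in \cite{GuZhang, Wilking}) to reduce to either a Hermitian symmetric space or a factor with strictly positive bisectional curvature, the latter being handled by the K\"ahler analogue of the preceding theorem. Your anticipated obstacle---compactness of the asymptotic shrinker and the entropy rigidity step in the K\"ahler setting---is exactly what the paper addresses in the proof of the subsequent $B^\perp>0$ theorem, so your instinct to mirror that argument is sound.
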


We also give a complete classification of compact $\kappa$-noncollapsed Type I ancient solutions to the K\"ahler-Ricci flow with $B^{\perp} \geq 0$.
\begin{theorem}
Assume that $(M^n,g(t))$ is a compact type I, $\kappa$-noncollapsed (for some $\kappa>0$) ancient solution to the K\"ahler-Ricci flow with $B^{\perp} >0$. Then $(M, g(t))$ must be, up to scaling, isometric to $\mathbb{P}^n$ with its Fubini-Study metric .
\end{theorem}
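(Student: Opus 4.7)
The strategy mirrors \cite{Ni-type I}: extract an asymptotic shrinker via Perelman's Type I blow-down procedure, classify it using Theorem \ref{thm:12}, deduce via entropy rigidity that $(M,g(t))$ itself is a gradient shrinking soliton, and then apply Theorem \ref{thm:12} one more time together with the compact $B^\perp>0$ classification of \cite{GuZhang, Wilking}.

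First, the Type I bound $|\Rm|\le A/|t|$ together with $\kappa$-noncollapsing lets us run the Type I asymptotic shrinker construction: the parabolic rescalings $g_i(t):=|t_i|^{-1}g(|t_i|t)$ with $t_i\to -\infty$ subconverge in the pointed Cheeger-Gromov sense to a nontrivial gradient shrinking K\"ahler-Ricci soliton $(M_\infty, g_\infty(t))$. By Proposition \ref{prop:61} the ancient solution $(M,g(t))$ has nonnegative bisectional curvature, in particular $B^\perp\ge 0$, and these conditions pass to $(M_\infty,g_\infty)$. The same argument as Lemma 0.3 of \cite{Ni-type I} (using that $M$ is compact, the Type I bound, and $\kappa$-noncollapsing) then shows that $M_\infty$ is compact.

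Applying Theorem \ref{thm:12} to $(M_\infty,g_\infty)$, its universal cover is a product of compact irreducible Hermitian symmetric spaces (no $\mathbb{C}^k$ factor, by compactness). Now the entropy step: since $M$ is compact, Perelman's $\nu$-entropy $\nu(g(t))$ is finite and monotone nondecreasing in $t$; the blow-down limit being a shrinker identifies $\lim_{t\to-\infty}\nu(g(t))$ with the shrinker entropy of $(M_\infty,g_\infty)$, and a standard comparison forces $\nu(g(t))$ to be constant in $t$. The equality case of Perelman's monotonicity formula then forces $(M,g(t))$ to itself be a gradient shrinking K\"ahler-Ricci soliton, verbatim as in \cite{Ni-type I}.

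At this point $(M,g)$ is a compact K\"ahler-Ricci shrinker with $B^\perp>0$, and Theorem \ref{thm:12} applied directly to $(M,g)$ gives that its universal cover splits isometrically-holomorphically as $N_1\times\cdots\times N_l\times\mathbb{C}^k$. The strict positivity $B^\perp>0$ rules out any $\mathbb{C}^k$ factor (a flat direction would give $B^\perp=0$) and forces $l=1$ (for $X$ tangent to $N_i$ and $Y$ tangent to $N_j$ with $i\neq j$ one has $R(X,\bar X,Y,\bar Y)=0$). Hence the universal cover is a single compact irreducible K\"ahler manifold with $B^\perp>0$, which by \cite{GuZhang, Wilking} is biholomorphic to $\mathbb{P}^n$; since $\mathbb{P}^n$ is simply connected, $M$ itself is $\mathbb{P}^n$. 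Being in addition a Hermitian symmetric shrinker, it is K\"ahler-Einstein, so up to scaling the metric is Fubini-Study. The main obstacle is the entropy rigidity step, namely checking that the $\nu$-monotonicity argument of \cite{Ni-type I} transfers to the K\"ahler category under the weaker $B^\perp\ge 0$ hypothesis; this is handled because Proposition \ref{prop:61} upgrades $B^\perp\ge 0$ to nonnegative bisectional curvature (hence $\Ric\ge 0$), which is all the analytic input \cite{Ni-type I} needs.
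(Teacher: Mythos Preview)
Your proposal follows the same architecture as the paper's proof (blow-down to an asymptotic shrinker via \cite{Ni-type I}, classify it with Theorem~\ref{thm:12}/\ref{thm:22}, then use entropy rigidity), and the endgame --- ruling out products and the flat factor by strict $B^\perp>0$ --- is fine.

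The one genuine gap is your description of the entropy step. You write that ``a standard comparison forces $\nu(g(t))$ to be constant,'' but you only pin down the \emph{backward} endpoint $\lim_{t\to-\infty}\nu(g(t))$. The argument in \cite{Ni-type I} (and the paper's proof here) sandwiches $\nu(g(t))$ between its $t\to-\infty$ and $t\to 0^-$ limits and shows that \emph{both} equal the entropy of the Fubini--Study $\mathbb{P}^n$; constancy then follows from monotonicity. The forward endpoint comes from the convergence results of \cite{XChen, GuZhang, Wilking}: since $(M,g(t))$ is compact with $B^\perp>0$, the K\"ahler--Ricci flow converges to Fubini--Study as $t\to 0$. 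Your final paragraph invokes Proposition~\ref{prop:61} to justify transferring \cite{Ni-type I}, but nonnegative bisectional curvature (or $\Ric\ge 0$) alone does not identify the forward limit --- that really uses the $B^\perp>0$ convergence theorem. Once you insert the forward limit and the resulting entropy sandwich, your proof coincides with the paper's.
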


\begin{proof}

By Proposition \ref{prop:61}, we know that $(M,g(t))$ has nonnegative bisectional curvature. So we can apply the blow-down procedure to $(M,g(t))$ as $t \to  -\infty$ using Proposition 11.2 of Perelman \cite{Perelman} and its adaption to the K\"ahler case in \cite{Ni-MRL}, to get a limiting shrinker $(M_\infty, g_\infty)$ with $B^{\perp} \geq 0$.
By similarly arguments as Lemma 0.3 in \cite{Ni-type I}, we can conclude that $(M_\infty, g_\infty)$ must be compact. Thus it is forced to be topologically $\mathbb{P}^n$, thus isometric to $\mathbb{P}^n$ by Theorem \ref{thm:22}.

On the other hand, by the work of \cite{XChen, GuZhang, Wilking}, we also know that $(M, g(t)) \to (M_\infty, g_\infty)$ as $t \to  0$. The fact that $(M,g(t))$ must be a shrinker follows from the equality case of the monotonicity of Perelman's entropy $\nu(M, g(t))$ as illustrated in \cite{Ni-type I}.
\end{proof}

The strong maximum principle in \cite{BS2} and its extension in \cite{GuZhang, Wilking}
imply
\begin{corollary}
Assume that $(M^n,g(t))$ is a compact type I, $\kappa$-noncollapsed (for some $\kappa>0$) ancient solution to the K\"ahler-Ricci flow with $B^{\perp} \geq 0$. Then $(M, g(t))$ must be quotients of products of Hermitian symmetric spaces.
\end{corollary}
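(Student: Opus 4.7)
The plan is to reduce the weak inequality $B^\perp \ge 0$ to the strict inequality case handled by the preceding theorem, via a de Rham splitting of the universal cover dictated by the strong maximum principle. First I would apply Proposition~\ref{prop:61} to upgrade $B^\perp \ge 0$ to nonnegative holomorphic bisectional curvature on $(M,g(t))$ for every $t<0$. Since $M$ is compact the curvature is automatically bounded on each finite time interval, so the strong maximum principle of Brendle-Schoen \cite{BS2}, as extended to the nonnegative bisectional cone in \cite{GuZhang, Wilking}, applies and yields the following dichotomy at every spacetime point: either the bisectional curvature is strictly positive, or the null set of $R(X,\bar X, Y, \bar Y)$ determines a distribution that is invariant under parallel transport and under the flow.

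Using this parallel null distribution together with the de Rham decomposition theorem in the K\"ahler category, I would split the universal cover isometrically and holomorphically as $\tilde M = \mathbb{C}^k \times N_1 \times \cdots \times N_\ell$ with each $N_i$ irreducible and with the pulled-back flow respecting the splitting. On each irreducible factor $N_i$ the dichotomy forces either $B^\perp > 0$ everywhere, or $N_i$ to be a compact irreducible Hermitian symmetric space of rank $\ge 2$ with stationary flow (up to scaling), while Cheeger-Gromoll rules out any additional Euclidean factor beyond $\mathbb{C}^k$. On each $N_i$ with $B^\perp > 0$, I would then invoke the preceding theorem, after verifying that $N_i$ inherits from $M$ the structure of a compact, type~I, $\kappa$-noncollapsed ancient K\"ahler-Ricci flow; this identifies such a factor with $\mathbb{P}^{n_i}$ equipped with the Fubini-Study metric (up to scaling). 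Thus $\tilde M$ becomes a product of Hermitian symmetric spaces, making $M$ a quotient of such a product, as claimed.

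The main obstacle I anticipate is transferring the hypotheses of the preceding theorem to each irreducible factor. Compactness of the non-Euclidean factors follows from Cheeger-Gromoll, and Type~I transfers cleanly because the curvature norm of a metric product decomposes orthogonally, but $\kappa$-noncollapsing requires more care: one must combine the cocompactness of the deck group action on $\tilde M$ with the product structure to descend to a (possibly smaller but still positive) noncollapsing constant on each factor, after which the preceding theorem applies directly.
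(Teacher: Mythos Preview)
Your proposal is correct and follows exactly the route the paper indicates: the paper's entire proof is the single line ``The strong maximum principle in \cite{BS2} and its extension in \cite{GuZhang, Wilking} imply,'' and your expansion---upgrading to nonnegative bisectional curvature via Proposition~\ref{prop:61}, splitting the universal cover via the strong maximum principle into irreducible factors that are either Hermitian symmetric of higher rank or have $B^\perp>0$, and invoking the preceding theorem on the strict factors---is precisely the intended argument. Your attention to transferring compactness, the type~I bound, and $\kappa$-noncollapsing to the irreducible factors is appropriate and accounts for details the paper leaves implicit.
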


The examples in \cite{BKN} seem to suggest that the results no longer hold if we drop the assumption of the non-collapsing. However, since the example of \cite{BKN} is the Ricci flow of  Hermitian metrics, it remains interesting to construct examples of the K\"ahler-Ricci flow.

\section*{Acknowledgments} { We  thank Ovidiu Munteanu, Jiaping Wang and  Professor Hung-Hsi Wu for their interest to this work. We are also grateful to Burkhard Wilking for explaining \cite{BCW}}.

\bigskip

\bibliographystyle{plain}




\end{document}